\documentclass[11pt,a4paper,reqno]{amsart}
\usepackage{geometry}
\usepackage[T1]{fontenc}
\usepackage[utf8]{inputenc}

\usepackage{amsfonts}
\usepackage{amsmath}
\usepackage{amsthm}
\usepackage{amssymb}
\usepackage{mathrsfs}
\usepackage[numbers]{natbib}
\usepackage[fit]{truncate}
\usepackage{orcidlink}
\usepackage{subcaption}

\usepackage{mathtools} 
\usepackage{enumitem}  
\usepackage{lmodern}
\usepackage{microtype}
\usepackage{pgfplots}
\pgfplotsset{compat=1.18}
\usepackage{booktabs}

\usepackage{hyperref}
\definecolor{linkblue}{RGB}{0,71,171}
\hypersetup{
	colorlinks = true,
	linkcolor  = linkblue,
	citecolor  = linkblue,
	urlcolor   = linkblue
}

\mathtoolsset{showonlyrefs} 
\numberwithin{equation}{section}
\allowdisplaybreaks

\theoremstyle{plain}
\newtheorem{theorem}{Theorem}[section]
\newtheorem{corollary}[theorem]{Corollary}
\newtheorem{lemma}[theorem]{Lemma}

\newtheorem{proposition}[theorem]{Proposition}

\theoremstyle{definition}
\newtheorem{definition}[theorem]{Definition}
\newtheorem{example}[theorem]{Example}

\newtheorem{remark}[theorem]{Remark}

\DeclareMathOperator{\Esp}{E}
\DeclareMathOperator{\Prb}{P}

\DeclareMathOperator{\dist}{dist}

\newcommand{\mc}[0]{\mathcal}

\newcommand{\mbf}[0]{\mathbf}

\newcommand{\ol}[0]{\overline}

\newenvironment{enuroman}{\begin{enumerate}[label=(\roman*)]}{\end{enumerate}}

\newcommand{\rd}{\mathrm{d}}
\newcommand{\vd}{\,\mathrm{d}}

\newcommand{\indic}[1]{\mathbf{1}_{#1}}
\newcommand{\indicB}[1]{\mathbf{1}_{\{#1\}}}
\newcommand{\loct}[3]{L^{#2}_{#3}(#1)}

\newcommand{\braces}[1]{ \left({#1}\right) } 
\newcommand{\sqbraces}[1]{ \left[{#1}\right] } 
 
\newcommand{\xnorm}[2]{ \left\|{#1}\right\|_{#2}} 
\newcommand{\abs}[1]{ \left|{#1}\right|} 
 
\newcommand{\qv}[1]{ \left\langle {#1} \right\rangle }

\newcommand{\hemail}[1]{\href{mailto:{#1}}{\normalfont \texttt{#1}}}

\newenvironment{cdate}{
	\noindent
	\textbf{\small This version:}
	\small
}{\par}

\newenvironment{keys}{
	\noindent
	\textbf{\small Keywords:}
	\small
}{\par}

\newenvironment{msc}{
	\noindent
	\textbf{\small MSC 2020 codes:}
	\small
}{\par}

\begin{document}
	
	\begin{abstract}
Let $A$ be a positive continuous additive functional of a strong Markov process and $\alpha$ an independent finite-state Markov chain. For bounded $h$, we study
\[
J_\varepsilon(t)=\int_0^t\bigl(h(\alpha_{s/\varepsilon})-\pi(h)\bigr)\,dA_s .
\]
If $\sup_y \mathrm E_y A_t\le C_T t^\vartheta$, then for every bounded random time determined by the base process, not necessarily a stopping time,
\[
\mathrm E|J_\varepsilon(\tau)|^2=O(\varepsilon^\vartheta),\qquad
\mathrm E\sup_{t\le T}|J_\varepsilon(t)|^2
=O\!\left(\varepsilon^\vartheta(1+|\log\varepsilon|)^2\right).
\]
For reflected Brownian boundary local time, the exponent $1/2$ is sharp at fixed time.

We apply this principle to reflected Brownian motion with rapidly switching Wentzell boundary dynamics. The homogenized boundary condition is again of Wentzell type, with stickiness and killing coefficients given by invariant averages. For bounded Lipschitz observables, the quenched Feynman--Kac error has every polynomial rate below $1/8$, uniformly over deterministic times and in expected uniform norm. For smooth Neumann-compatible observables with proportional killing and stickiness, the rate improves to $O(\varepsilon^{1/4})$, up to a logarithmic factor in the uniform-in-time estimate.

We also construct the switching process, derive quenched and annealed backward evolutions, and transfer the estimates by duality to the forward measure evolution. For the forward evolution, we obtain quantitative convergence in bounded-Lipschitz distance and the sharper $1/4$ rate for smooth observables, including surviving mass. Numerical experiments illustrate the finite-scale convergence.
	\end{abstract}
	
	\title[Homogenization of Markov-modulated additive functionals]{Quantitative averaging of Markov-modulated additive functionals and Wentzell boundary homogenization}
	
	\author{Alexis Anagnostakis}
	\address{Alexis Anagnostakis -- Centro de Modelamiento Matemático (CNRS IRL2807), Universidad de Chile, Santiago, Chile}
	\email{\hemail{aanagnostakis@cmm.uchile.cl}}
	\urladdr{\orcidlinkf{0009-0000-3306-9766}}
	
	\author{Fausto Colantoni}
	\address{Fausto Colantoni -- Institute of Mathematical Finance, Ulm University, Ulm, Germany}
	\email{\hemail{fausto.colantoni@uni-ulm.de}}
	\urladdr{\orcidlinkf{0000-0001-5123-5916}}
	
	\maketitle
	
	\begin{quote}
		\begin{cdate}
			\today
		\end{cdate}
		
		\begin{msc}
			35B27; 35K20; 35R60; 60J27; 60J55; 92C20.
		\end{msc}
		
		\begin{keys}
			Markov-modulated additive functionals; quantitative averaging; fast switching; positive continuous additive functionals; boundary local time; Wentzell boundary conditions; reflected Brownian motion.
		\end{keys}
	\end{quote}
	
	\setcounter{tocdepth}{2}
	\tableofcontents
	
	\section{Introduction}
	
	A rapidly changing boundary is scarcely observed in its microscopic detail.
	One sees instead an effective boundary condition. 
	Robin conditions are the classical example: they interpolate between reflection and absorption and arise in a
	number of boundary-homogenization problems
	\cite{bensoussan1978homogenization,tanaka1984homogenization}. Probabilistically, a Robin condition describes reflected diffusion killed at a rate proportional to its boundary local time.
	
	The same sort of averaging can be observed under temporal switching. 
	Fast switching between Dirichlet and Neumann states can produce an effective Robin condition
	\cite{LawleyKeener}; Markov-switched parabolic equations and switching
	absorbing--reflecting boundaries have been studied in
	\cite{LMR,BressloffLawley}. The time-dependent Robin problem of
	\cite{colantoni2026_time_dependent_Robin} is particularly close to the present
	setting. Spatial alternation of Dirichlet and Neumann pieces gives a different,
	but related, class of homogenization problems \cite{FiloLuckhaus}.
	
	Robin conditions account for killing at the boundary, but not for a positive
	reversible residence time there. Wentzell\footnote{The name is transliterated from Russian in several ways in the literature, including \emph{Ventcel'}, \emph{Ventsel'}, and \emph{Venttsel'}. We use the conventional spelling \emph{Wentzell} throughout.} 
	conditions do. Their probabilistic
	counterparts are sticky diffusions: after reaching the boundary, the process
	may spend positive physical time there before returning to the interior. 
	This mechanism is classical in one dimension
	\cite{feller1952The_parabolic,ito1996Diffusion_processes,
		mandl1968Analytical_treatment}; multidimensional constructions and analytic
	treatments may be found in
	\cite{grothaus2017stochastic,dovidio2026_fractional_boundary,
		wentzell1960_on_boundary_conditions,FGGR}.
	
	\bigskip
	
	In this work, we consider the Wentzell condition without tangential diffusion. For a bounded
	domain $D\subset\mathbb R^d$ and a function $u$ on $\overline D$, the condition reads
	\begin{equation}
		\label{eq:intro-wentzell-condition}
		\frac{\rho}{2}\Delta u
		+
		\partial_{\mathbf n}u
		+
		\kappa u
		=
		0,
		\qquad
		\text{on }\partial D,
	\end{equation}
	where $\mathbf n$ denotes the outward unit normal and $\Delta u|_{\partial D}$ is the boundary trace of the Laplacian computed in the interior. Since the interior dynamics satisfy
	\[
	\partial_tu=\frac12\Delta u,
	\]
	condition~\eqref{eq:intro-wentzell-condition} is equivalently the dynamic boundary condition
	\[
	\rho\,\partial_tu
	+
	\partial_{\mathbf n}u
	+
	\kappa u
	=
	0,
	\qquad
	\text{on }\partial D.
	\]
	The coefficient $\rho$ is the stickiness parameter and measures boundary residence; $\kappa$ is the killing rate and measures elastic killing through boundary local time.
	
	We ask what remains of these two mechanisms when they both fluctuate rapidly. Let
	$D$ be connected with $C^2$ boundary, and consider the family
	\begin{equation}
		\label{eq:intro-switching-wentzell}
		\frac{\rho_i}{2}\Delta u
		+
		\partial_{\mathbf n}u
		+
		\kappa_i u
		=
		0,
		\qquad
		\text{on }\partial D,
		\qquad
		i\in S:=\{1,\ldots,N\}.
	\end{equation}
	In regime $i$, the parameters are $\rho_i$ and $\kappa_i$. The regime is an
	irreducible continuous-time Markov chain
	$\alpha=(\alpha_t)_{t\ge0}$ on $S$, with invariant law
	$\pi=(\pi_i)_{i\in S}$. The chain runs in the \emph{operational time} of the
	underlying reflected Brownian motion, not in physical time. We accelerate it by
	\[
	\alpha_{\cdot/\varepsilon},
	\qquad
	\varepsilon\to0.
	\]
	
	The averaging argument is formulated for a general positive continuous
	additive functional and is therefore not specific to boundary local time.
	Let $Y$ be a strong Markov process, $A$ a positive continuous additive functional of $Y$,
	and $\alpha$ an independent irreducible finite-state chain. 
	For bounded
	$h:S\to\mathbb R$, the quantity to control is
	\begin{equation}
		\label{eq:intro-general-AF}
		J_\varepsilon(t)
		:=
		\int_0^t
		\bigl(h(\alpha_{s/\varepsilon})-\pi(h)\bigr)\,dA_s.
	\end{equation}
	Assume the short-time growth estimate
	\begin{equation}
		\label{eq:AF_condition}
		\sup_y\Esp_y A_t\le C_Tt^\vartheta,\qquad\vartheta\in(0,1].
	\end{equation}
	Then, for every bounded random time $\tau$ measurable with respect to the base process,
	\begin{equation}
		\label{eq:AF_results}
		\Esp|J_\varepsilon(\tau)|^2\lesssim\varepsilon^\vartheta,
		\qquad
		\Esp\sup_{t\le T}|J_\varepsilon(t)|^2
		\lesssim\varepsilon^\vartheta(1+|\log\varepsilon|)^2.
	\end{equation}
	Example~\ref{ex:abstract-benchmarks} illustrates the two characteristic
	regimes: ordinary occupation gives \(\vartheta=1\), while Brownian local time
	gives \(\vartheta=1/2\). In the latter case, the terminal-time variance is
	computed explicitly, showing that the exponent \(1/2\) is attained.
	
	Quantitative averaging for additive and occupation functionals has a
	substantial literature in ordinary-time and discrete-time settings.
	For singularly perturbed Markov chains, Yin and Zhang
	\cite{yinzhang2007_singular} obtain strong diffusion approximations
	with convergence rates for centered and scaled functional occupation
	processes.
	Related random-evolution methods yield averaging and diffusion
	approximations for systems of additive-functional type under Markov
	or semi-Markov switching; see, for example,
	\cite{korolyuklimnios2004_average,limniosswishchuk2013_discrete}.
	The latter work also provides convergence-rate estimates formulated
	in terms of expectations, including applications to discrete-time
	additive functionals.
	
	The cited works primarily concern occupation integrals with respect to
	Lebesgue time, or additive sums generated by the switching process. 
	The present setting is
	different in that the fast environment is integrated against an independent
	positive continuous additive functional,
	\[
	\int_0^t h(\alpha_{s/\varepsilon})\,dA_s,
	\]
	whose Stieltjes measure may be singular with respect to Lebesgue measure.
	The short-time growth of \(A\) then determines the averaging rate.
	Our result isolates this mechanism at the level of general PCAFs and gives
	quantitative \(L^2\) estimates both at bounded random times measurable with
	respect to the base process and uniformly on compact time intervals. 
	
	For boundary local time in a bounded $C^2$ domain, \eqref{eq:AF_condition} is satisfied for $\vartheta=1/2$. Hence the
	squared $L^2$ error at a random time is of order $\varepsilon^{1/2}$, and the
	maximal estimate gives every power below $1/2$. 
	
	Returning to the switching boundary condition~\eqref{eq:intro-switching-wentzell}, the rapidly switching model converges to the deterministic Wentzell problem
	\begin{equation}
		\label{eq:intro-homogenized-wentzell}
		\frac{\overline\rho}{2}\Delta u
		+
		\partial_{\mathbf n}u
		+
		\overline\kappa u
		=
		0,
		\qquad
		\text{on }\partial D,
	\end{equation}
	where
	\begin{equation}
		\label{eq:intro-averaged-coefficients}
		\overline\rho
		:=
		\sum_{i\in S}\rho_i\pi_i,
		\qquad
		\overline\kappa
		:=
		\sum_{i\in S}\kappa_i\pi_i.
	\end{equation}
	The limit is again of Wentzell type, and no correction term appears: the
	effective coefficients are simply the ergodic averages of the microscopic ones.
	For every bounded Lipschitz observable and every bounded physical-time interval, the expected quenched error is $O(\varepsilon^\beta)$ for all
	$0<\beta<1/8$, uniformly over deterministic observation times. 
	The same range of exponents holds when the supremum in time is taken before expectation. 
	The logarithmic factor in~\eqref{eq:AF_results} does not alter the final polynomial threshold. 
	
	The main difficulty is to transfer the averaging estimates to the
	time-changed process. The switching coefficients first enter through
	additive functionals of the boundary local time, which determine the random
	clock and the killing functional. 
	Their fluctuations perturb the inverse clock. 
	Through the inverse clock, they also affect the time at which the reflected Brownian motion and its local time are evaluated.
	We control these effects by coupling the
	switching model and its homogenized limit through the same reflected Brownian
	motion, estimating the clock and killing errors by the averaging theorem,
	and then using the H\"older regularity of the reflected path and its local
	time to pass these estimates through the inverse time change.

The backward estimates also yield quantitative homogenization of the
corresponding forward measure evolutions. For a finite nonnegative initial
measure, we prove convergence of the quenched forward measures to the
homogenized evolution in bounded-Lipschitz distance, with every polynomial
rate below \(1/8\), both uniformly over deterministic times and in expected
uniform norm. Under the proportionality and compatibility assumptions above,
the improved \(1/4\)-scale transfers to smooth forward observables as well.
In particular, taking \(f\equiv1\) gives quantitative convergence of the total
surviving mass, or equivalently of the survival probability when the initial
measure is a probability distribution.

We conclude with numerical experiments for the switching Wentzell model
on an interval. We compare annealed survival curves with the homogenized
PDE solution and estimate the quenched marginal and uniform-in-time
errors on a finite observation grid. For the survival observable,
proportional stickiness and killing coefficients yield a sharper
estimate, which provides the relevant theoretical comparison.
A second quenched experiment uses a generic bounded Lipschitz observable
and non-proportional microscopic coefficients, providing a numerical
comparison with the general bounds
\(O(\varepsilon^\beta)\), \(0<\beta<1/8\).
The experiments examine pre-asymptotic convergence over the simulated
range of switching scales. The parameter values are motivated by a
biological application and are chosen to reflect characteristic scales
of glutamate clearance near an excitatory synapse.
	
	\bigskip
	
	The construction extends the fast-switching Robin framework of
	\cite{colantoni2026_time_dependent_Robin}. 
	Setting every \(\rho_i=0\)
	recovers the Robin case. 
	In this case, no sticky time change is present and the
	switching and killing are indexed directly by physical time. When
	\(\rho_i>0\), the chain is instead indexed by the operational time of the
	underlying reflected Brownian motion, and both the boundary coefficients and
	the killing functional are evaluated through the inverse clock. Consequently,
	comparison at a deterministic physical time is reduced to an averaging
	estimate at the random time \(\gamma_0(t)\). This time-inversion step is the
	main additional difficulty in the Wentzell setting. Unlike
	\cite{colantoni2026_time_dependent_Robin}, we also obtain quantitative error
	bounds between the fast-switching model and its homogenized limit and study
	the sharpness of the underlying averaging estimates.

	\subsection{Paper outline}
	Section~\ref{sec:abstract-averaging} contains the abstract averaging theorem
	and its basic examples. Section~\ref{sec:construction} constructs the
	Wentzell process with Markovian switching. Section~\ref{sec:homogenization}
	proves the quantitative homogenization theorem and then discusses sharpness
	and improved rates for smoother observables. 
	Section~\ref{sec:quenched} identifies the quenched and annealed backward
	evolutions. Section~\ref{sec:forward-homogenization} develops the quenched
	forward formulation and derives quantitative forward homogenization by
	duality. Section~\ref{sec:numerical} contains the numerical experiments.
	
	The appendices contain the H\"older regularity estimates, the verification
	of the boundary-local-time assumptions, the proofs of the forward
	formulations, and the numerical approximation scheme for the reflected
	Brownian motion, boundary local time, random clock, and killing functional.
	
	\subsection{Notation and conventions}
	\label{ssec:notation}
	
	Unless stated otherwise, \(D\subset\mathbb R^d\) is a bounded, connected, and open domain with \(C^2\) boundary. 
	We write \(\overline D\) for its closure, \(\sigma\) for the surface measure on \(\partial D\), and \(\mathbf n\) for the outward unit normal vector. Thus
	\[
	\partial_{\mathbf n}f
	:=
	\nabla f\cdot\mathbf n
	\]
	for the corresponding normal derivative.
	
	The symbol \(W^+=(W_t^+)_{t\ge0}\) denotes reflected Brownian motion in
	\(\overline D\), with generator
	\[
	\frac12\Delta
	\]
	in the interior of \(D\). Its boundary local time is denoted by
	\(L=(L_t)_{t\ge0}\). We use the Skorokhod-regulator normalization
	\[
	dW_t^+
	=
	dB_t-\mathbf n(W_t^+)\,dL_t,
	\]
	where \(B\) is a standard \(d\)-dimensional Brownian motion. With this normalization, the occupation-density representation reads
	\[
	L_t
	=
	\lim_{\delta\downarrow0}
	\frac{1}{2\delta}
	\int_0^t
	\mathbf 1_{\{
		\operatorname{dist}(W_s^+,\partial D)<\delta
		\}}
	\,ds.
	\]
	This convention fixes the factor \(1/2\) used throughout the paper. 
	Figure~\ref{fig:Brownian_on_disk} illustrates the geometry and shows a
	sample path of \(W^+\) in a two-dimensional disk.
	
	\begin{figure}
		[htbp]
		\centering 
		\includegraphics[width=0.4\textwidth]{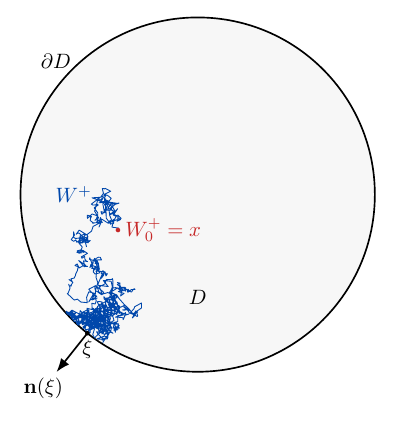}
		\caption{
			A sample path of reflected Brownian motion \(W^+\) in the unit disk,
			together with the boundary \(\partial D\) and an outward unit normal
			\(\mathbf n(\xi)\).
		}
		\label{fig:Brownian_on_disk}
	\end{figure}
	
	Throughout the paper, $S:=\{1,\ldots,N\} $
	is a finite state space and
	\(\alpha=(\alpha_t)_{t\ge0}\) is an irreducible continuous-time Markov chain
	on \(S\), with intensity matrix $Q=(q_{ij})_{i,j\in S} $.
	Unless explicitly stated otherwise, \(\alpha\) is independent of \(W^+\).
	Its unique invariant probability is denoted by $\pi=(\pi_i)_{i\in S} $.
	
	The functions
	\[
	\rho:S\to[0,\infty),
	\qquad
	\kappa:S\to[0,\infty)
	\]
	denote, respectively, the regime-dependent stickiness and elastic-killing
	coefficients. We use the abbreviations
	\[
	\rho_i:=\rho(i),
	\qquad
	\kappa_i:=\kappa(i),
	\]
	and their ergodic averages are
	\[
	\overline\rho
	:=
	\sum_{i\in S}\pi_i\rho_i,
	\qquad
	\overline\kappa
	:=
	\sum_{i\in S}\pi_i\kappa_i.
	\]
	
	We write $\Prb_x^W $, $\Esp_x^W $ for probability and expectation with respect to the reflected Brownian motion
	started from \(x\in\overline D\). When there is no ambiguity, we simply write $\Prb_x:=\Prb_x^W $, $\Esp_x:=\Esp_x^W $.
	We write $\Prb_i^\alpha $, $\Esp_i^\alpha $ 
	for probability and expectation with respect to the Markov chain started from
	\(i\in S\). Since \(\pi\) is its invariant probability, we also set
	\[
	\Prb_\pi^\alpha
	:=
	\sum_{i\in S}\pi_i\Prb_i^\alpha,
	\qquad
	\Esp_\pi^\alpha
	:=
	\sum_{i\in S}\pi_i\Esp_i^\alpha.
	\]
	Finally, since \(W^+\) and \(\alpha\) are taken independent, we set
	\[
	\Prb_{x,i}
	:=
	\Prb_x^W\otimes\Prb_i^\alpha,
	\qquad
	\Prb_{x,\pi}
	:=
	\Prb_x^W\otimes\Prb_\pi^\alpha,
	\]
	and write $\Esp_{x,i} $, $\Esp_{x,\pi} $
	for the corresponding joint expectations.
	
	For a measurable space \(F\), \(B_b(F)\) denotes the space of bounded Borel
	functions on \(F\). We write \(C(F)\) for the continuous functions when
	\(F\) is compact, and \(\|\cdot\|_\infty\) for the uniform norm.
	
	\section{Quantitative averaging for Markov-modulated additive functionals}
	\label{sec:abstract-averaging}
	
	Let \(Y=(Y_t)_{t\ge0}\) be a conservative continuous-time strong Markov process
	on a Polish space \(E\), realized on a canonical path space with shifts
	\((\theta_t)_{t\ge0}\) and its usual augmented filtration
	\((\mathcal F_t)_{t\ge0}\). We shall use the following standard notion.
	
	\begin{definition}
		A nonnegative $(\mathcal F_t)_{t\ge0}$-adapted process
		$A=(A_t)_{t\ge0}$ is called a \emph{positive continuous additive functional}
		(PCAF) of $Y$ if $A_0=0$ and, under every admissible starting law, its paths
		are almost surely continuous and nondecreasing, and satisfy
		\begin{equation}
			\label{eq:PCAF-additivity}
			A_{t+s}
			=
			A_t+A_s\circ\theta_t,
			\qquad s,t\ge0.
		\end{equation}
	\end{definition}
	
	Throughout this section, \(A\) is a strict PCAF, almost surely finite on compact intervals under every starting law considered.
	We choose a version for which \eqref{eq:PCAF-additivity}, including its
	stopping-time form, hold under every starting law that we use. If the original
	PCAF is defined only off a properly exceptional set, we work on the reduced
	state space. For reflected Brownian motion below, no such restriction is needed.
	
	Let $\alpha$ be an independent irreducible finite-state continuous-time Markov
	chain, with generator $Q$ and invariant law $\pi$. Under
	$\Prb_{y,i}=\Prb_y^Y\otimes\Prb_i^\alpha$, the two initial states are
	$Y_0=y$ and $\alpha_0=i$. We use the same coordinate processes for all
	$\varepsilon$. For bounded $h:S\to\mathbb R$, put
	$\widetilde h=h-\pi(h)$ and define
	\begin{equation}\label{eq:abstract-J}
		J_\varepsilon(t)=\int_0^t\widetilde h(\alpha_{s/\varepsilon})\,dA_s.
	\end{equation}
	These pathwise Stieltjes integrals are measurable and continuous in \(t\);
	indeed,
	\[
	|J_\varepsilon(t)-J_\varepsilon(s)|
	\le
	\|\widetilde h\|_\infty |A_t-A_s|.
	\]
	
	The only quantitative assumption is the following short-time bound: for some
	$\vartheta\in(0,1]$ and each $T>0$,
	\begin{equation}\label{eq:abstract-A-first}
		M(t):=\sup_{y\in E}\Esp_y A_t\le C_Tt^\vartheta,
		\qquad 0<t\le T.
	\end{equation}
	
	\begin{lemma}
		\label{lem:AF-conditioning}
		Let \(F\) be a nonnegative measurable functional of the future base path such that
		\[
		K:=\sup_{z\in E}\Esp_z F<\infty.
		\]
		For deterministic $0\le a\le b$,
		\begin{equation}\label{eq:AF-conditioning}
			\Esp_y\int_a^b F\circ\theta_s\,dA_s
			\le K\Esp_y(A_b-A_a).
		\end{equation}
		Moreover, for every integer $p\ge1$,
		\begin{equation}\label{eq:abstract-A-moment}
			\sup_y\Esp_y A_t^p\le p!M(t)^p\le p!C_T^pt^{p\vartheta}.
		\end{equation}
		For every finite real $q\ge1$ this implies
		\begin{equation}\label{eq:abstract-A-increments}
			\sup_y\Esp_y(A_v-A_u)^q\le C_{T,q}(v-u)^{q\vartheta},
			\qquad 0\le u<v\le T.
		\end{equation}
	\end{lemma}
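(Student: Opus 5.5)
The plan is to prove the conditioning bound \eqref{eq:AF-conditioning} first, by the Markov property together with the standard optional-projection argument for PCAFs. After that, the moment bound \eqref{eq:abstract-A-moment} follows by induction from the usual Khas'minskii-type identity. The increment bound \eqref{eq:abstract-A-increments} is then a consequence of additivity and interpolation.

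For \eqref{eq:AF-conditioning}, set $g(z):=\Esp_z F\le K$. Since $dA_s$ is a continuous, adapted random measure, the optional projection of $s\mapsto F\circ\theta_s$ is $g(Y_s)$ by the Markov property at fixed times, and by the strong Markov property at stopping times. The dual predictable/optional projection theorem for continuous increasing processes then gives
\[
\Esp_y\int_a^b F\circ\theta_s\,dA_s=\Esp_y\int_a^b g(Y_s)\,dA_s\le K\,\Esp_y(A_b-A_a).
\]
To avoid invoking the projection theorem abstractly, I would approximate $\indic{(a,b]}\,dA$ by Riemann sums $\sum_k F\circ\theta_{s_k}\,(A_{s_{k+1}}-A_{s_k})$. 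There is one subtlety: $A_{s_{k+1}}-A_{s_k}$ is $\mathcal F_{s_{k+1}}$-measurable, not $\mathcal F_{s_k}$-measurable, so conditioning on $\mathcal F_{s_k}$ does not factor directly. The clean route is to use the time-change by the right-continuous inverse $\tau_r=\inf\{s:A_s>r\}$ of $A$. This gives
\[
\int_a^b F\circ\theta_s\,dA_s=\int_{A_a}^{A_b}F\circ\theta_{\tau_r}\,dr.
\]
Each $\tau_r$ is a stopping time, and on $\{\tau_r<\infty\}$ we have $\Esp_y[F\circ\theta_{\tau_r}\mid\mathcal F_{\tau_r}]=g(Y_{\tau_r})\le K$. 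Applying Fubini to $\indic{\{A_a\le r<A_b\}}$, and noting that $\{r<A_b\}=\{\tau_r<b\}$ and $\{A_a\le r\}$ lie in $\mathcal F_{\tau_r}$ (up to null sets via continuity), yields the claim. I expect this measurability bookkeeping to be the main, though only technical, obstacle.

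For the moments, use the identity
\[
A_t^p=p!\int_{0<s_1<\dots<s_p<t}dA_{s_1}\cdots dA_{s_p},
\]
equivalently $A_t^p=p\int_0^t (A_t-A_s)^{p-1}dA_s$ obtained by symmetrization. By additivity, $A_t-A_s=A_{t-s}\circ\theta_s\le A_t\circ\theta_s$, so $(A_t-A_s)^{p-1}\le F\circ\theta_s$ with $F=A_t^{p-1}$. Applying \eqref{eq:AF-conditioning} on $[0,t]$ with $K\le (p-1)!M(t)^{p-1}$ (the induction hypothesis) gives
\[
\Esp_yA_t^p\le p\,(p-1)!\,M(t)^{p-1}\,\Esp_yA_t\le p!\,M(t)^p.
\]
This is the classical Khas'minskii bound, and combining it with \eqref{eq:abstract-A-first} gives \eqref{eq:abstract-A-moment}.

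Finally, additivity at the deterministic time $u$ together with the Markov property gives
\[
\Esp_y(A_v-A_u)^p=\Esp_y\,\Esp_{Y_u}A_{v-u}^p\le p!\,C_T^p(v-u)^{p\vartheta}
\]
for integer $p$. For real $q\ge1$, take $p=\lceil q\rceil$ and apply Jensen (Lyapunov), $\Esp X^q\le(\Esp X^p)^{q/p}$, which yields $C_{T,q}=(p!)^{q/p}C_T^q$.
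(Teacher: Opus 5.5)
Your proposal is correct and follows the paper's proof essentially step by step. It uses the same ingredients. First, the time change $\tau_r=\inf\{s:A_s>r\}$ with the strong Markov property at $\tau_r$; your event $\{A_a\le r<A_b\}$ is exactly the paper's $\{a\le\tau_r<b\}$. Second, the identity $A_t^p=p\int_0^t(A_t-A_s)^{p-1}\,dA_s$ together with $A_t-A_s\le A_t\circ\theta_s$ and induction. Third, the Markov property at time $u$ followed by Lyapunov's inequality with $p=\lceil q\rceil$.
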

	
	\begin{proof}
		Set $\tau_\ell=\inf\{s:A_s>\ell\}$, with $\inf\emptyset=\infty$.
		Then $\tau_\ell$ is a stopping time, and the Stieltjes change of variables gives  (see \cite[Proposition~0.4.10]{revuz1999Continuous_martingales})
		\[
		\int_a^b F\circ\theta_s\,dA_s
		=\int_0^\infty\mathbf1_{\{a\le\tau_\ell<b\}}
		F\circ\theta_{\tau_\ell}\,d\ell.
		\]
		Endpoint and plateau levels have zero Lebesgue measure. Conditioning at
		$\tau_\ell$ and using the strong Markov property gives the bound
		$K\Esp_y\int_0^\infty\mathbf1_{\{a\le\tau_\ell<b\}}d\ell$, which is
		\eqref{eq:AF-conditioning}. 
		The interchange of expectation and integration is justified by localization in time and level and then applying monotone convergence.
		
		For integer $p\ge2$, continuity and monotonicity give
		\[
		A_t^p=p\int_0^t(A_t-A_s)^{p-1}\,dA_s.
		\]
		Since $A_t-A_s=A_{t-s}\circ\theta_s\le A_t\circ\theta_s$, induction and \eqref{eq:AF-conditioning} give
		\[
		\Esp_y A_t^p\le p(p-1)!M(t)^{p-1}\Esp_y A_t\le p!M(t)^p.
		\]
		All integrals are nonnegative, so Tonelli and truncation justify the induction
		without any prior moment assumption. Applying the same argument after time
		$u$ gives the increment estimate for integer powers; Lyapunov's inequality
		with $p=\lceil q\rceil$ gives the real exponents. 
		This is the standard ordered-integral (Kac moment) argument for positive continuous additive functionals; see \cite[Section~3.1]{fitzsimmons1999kac}.
		The same Kac moment machinery also yields the two-point local-time moment identity used in Example~\ref{ex:abstract-benchmarks}.
	\end{proof}
	
	We now present our main abstract quantitative averaging theorem whose rate is governed by the short-time growth exponent \(\vartheta\).
	
	\begin{theorem}[Quantitative averaging]
		\label{thm:abstract-averaging}
		Assume \eqref{eq:abstract-A-first}. The following estimates hold uniformly in the initial states $y\in E$ and $i\in S$, for $0<\varepsilon\le1$.
		\begin{enuroman}
			\item\label{item:abstract-random-time}
			For every $[0,T]$-valued random time $\tau$ measurable with respect to the base path,
			\begin{equation}\label{eq:abstract-random-time}
				\Esp_{y,i}|J_\varepsilon(\tau)|^2\le C_{T,h}\varepsilon^\vartheta.
			\end{equation}
			No stopping-time property is required. Once the base path is fixed,
			both $dA$ and the upper limit $\tau$ are deterministic from the point of
			view of the chain.
			\item\label{item:abstract-max}
			The uniform estimate is
			\begin{equation}\label{eq:abstract-max}
				\Esp_{y,i}\sup_{0\le t\le T}|J_\varepsilon(t)|^2
				\le C_{T,h}\varepsilon^\vartheta(1+|\log\varepsilon|)^2.
			\end{equation}
			In particular, $J_\varepsilon\to0$ in $L^2$ for the uniform norm on $C([0,T])$ under every starting law, uniformly in the starting states.
		\end{enuroman}
	\end{theorem}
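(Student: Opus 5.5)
The plan is to condition on the base path and use independence to reduce everything to a deterministic-measure computation for the chain. Fix the base path, so that \(dA\) is a deterministic finite measure \(\mu\) on \([0,T]\) and \(\tau\) a deterministic number. Then
\[
\Esp^\alpha_i|J_\varepsilon(\tau)|^2=\int_0^\tau\!\!\int_0^\tau \Esp_i^\alpha\bigl[\widetilde h(\alpha_{s/\varepsilon})\widetilde h(\alpha_{r/\varepsilon})\bigr]\,\mu(ds)\,\mu(dr).
\]
For an irreducible finite chain there are constants \(C,\lambda>0\) with \(|P_t\widetilde h(j)|\le Ce^{-\lambda t}\) uniformly in \(j\). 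By the Markov property at time \(s/\varepsilon\) for \(s\le r\), the covariance kernel is bounded by \(C\|\widetilde h\|_\infty e^{-\lambda(r-s)/\varepsilon}\), uniformly in \(i\). This step needs no centering of the initial law. Hence, bounding the integral over \([0,\tau]^2\) by the one over \([0,T]^2\), which uses nonnegativity of the kernel bound and is exactly why no stopping-time property is needed,
\[
\Esp_{y,i}|J_\varepsilon(\tau)|^2\le 2C\,\Esp_y\int_0^T\!\!\int_s^T e^{-\lambda(r-s)/\varepsilon}\,dA_r\,dA_s .
\]

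Next I bound the inner integral via Lemma~\ref{lem:AF-conditioning}. Write \(e^{-\lambda u/\varepsilon}=\int_u^\infty (\lambda/\varepsilon)e^{-\lambda v/\varepsilon}\,dv\), so that
\[
\int_s^T e^{-\lambda(r-s)/\varepsilon}\,dA_r\le \int_0^\infty \frac{\lambda}{\varepsilon}e^{-\lambda v/\varepsilon}\,A_{v\wedge T}\circ\theta_s\,dv .
\]
This is a functional \(F\circ\theta_s\) with \(\sup_z\Esp_zF\le \int_0^\infty(\lambda/\varepsilon)e^{-\lambda v/\varepsilon}C_T(v\wedge T)^\vartheta\,dv\le C'\varepsilon^\vartheta\). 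Applying \eqref{eq:AF-conditioning} with \(a=0\), \(b=T\) gives the bound \(C'\varepsilon^\vartheta\,\Esp_yA_T\le C'C_TT^\vartheta\varepsilon^\vartheta\), which proves \ref{item:abstract-random-time}. A minor technical point is measurability of the conditioning and Fubini; this is fine since everything is nonnegative or bounded.

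For \ref{item:abstract-max}, I would use a chaining/grid argument. Take the grid \(t_k=k\delta\) with mesh \(\delta\) and split
\[
\sup_{t\le T}|J_\varepsilon(t)|\le\max_k|J_\varepsilon(t_k)|+\|\widetilde h\|_\infty\max_k(A_{t_{k+1}}-A_{t_k}).
\]
A crude union bound on the first term costs a factor \(T/\delta\), which is too much. The plan is therefore a dyadic (Lévy/Garsia-type) chaining. Part \ref{item:abstract-random-time}, applied to increments after time \(u\), gives \(\Esp|J_\varepsilon(v)-J_\varepsilon(u)|^2\lesssim \min(\varepsilon^\vartheta,(v-u)^{2\vartheta})\)-type bounds. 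The small-scale bound comes from \eqref{eq:abstract-A-increments} with \(q=2\), and the large-scale bound from the decorrelation argument above applied on \([u,v]\), via the Markov property of \(Y\) at \(u\). For dyadic levels \(n\), the sum over level-\(n\) increments of \(\Esp|\Delta J|^2\) is controlled. Taking square roots and summing with \(\ell^2\)-in-\(k\) bounds via \(\Esp\max_k|X_k|^2\le\sum_k\Esp|X_k|^2\) is wasteful, so instead I would use the exact additive structure. For a block of length \(2^{-n}T\ge\varepsilon\), the second moment is \(\lesssim\varepsilon\cdot\Esp(\text{mass})\)-ish, so summing over blocks at a level gives \(\lesssim\varepsilon^\vartheta\) times a bounded quantity. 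The number of relevant levels between scale \(\varepsilon\) and \(T\) is \(O(1+|\log\varepsilon|)\). Minkowski over levels then yields \(\bigl(\sum_n(\varepsilon^{\vartheta})^{1/2}\bigr)^2\sim\varepsilon^\vartheta(1+|\log\varepsilon|)^2\). Below scale \(\varepsilon\), I would use the trivial bound by \(\|\widetilde h\|_\infty\max_k(A_{t_{k+1}}-A_{t_k})\) with \(\delta=\varepsilon\), controlled by high moments from \eqref{eq:abstract-A-increments} and a union bound, which costs only \(\varepsilon^{-1/q}\) for large \(q\).

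The main obstacle is this maximal step. It requires making precise the refined estimate that the sum over dyadic blocks of a level of \(\Esp|J_\varepsilon(\text{block})|^2\) is \(O(\varepsilon^\vartheta)\) rather than \(O(\text{number of blocks}\cdot\varepsilon^\vartheta)\). I expect this to follow from the first-step computation: the double integral restricted to the diagonal blocks is still bounded by the full \(\Esp\int\!\int e^{-\lambda|r-s|/\varepsilon}dA\,dA\lesssim\varepsilon^\vartheta\). I would also have to handle the fine-scale remainder so that it does not exceed \(\varepsilon^\vartheta\) up to logs. If the fine-scale term is too lossy for small \(\vartheta\), I would instead take the grid mesh \(\delta=\varepsilon^{K}\) and absorb the extra levels into the logarithm.
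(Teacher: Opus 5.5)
Your proposal is correct and follows the paper's proof essentially step by step. You condition on the base path, use the uniform exponential covariance bound, feed the discounted potential $\sup_z\Esp_z\int_0^T e^{-\lambda u/\varepsilon}\,dA_u\lesssim\varepsilon^\vartheta$ (your layer-cake $F$ is exactly the paper's $F$) into \eqref{eq:AF-conditioning}, and then run a dyadic telescoping with Minkowski over $O(1+|\log\varepsilon|)$ levels, plus a high-moment bound on $\max_k(A_{t_{k+1}}-A_{t_k})$ at the fine scale. The only differences are cosmetic: you bound each level sum $\sum_k\Esp|D_{j,k}|^2$ by restricting the full double integral to disjoint diagonal blocks, whereas the paper proves the interval estimate with factor $\Esp_y(A_b-A_a)$ and telescopes; your mesh $\delta=\varepsilon$ with $q\ge2/\vartheta$ is a special case of the paper's $\delta\asymp\varepsilon^{\vartheta/(2\vartheta-2/p)}$; and the block second moment is $\varepsilon^\vartheta$, not $\varepsilon$, times the mass.
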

	
	\begin{proof}
		\emph{Step 1: mixing and discounted potentials.} 
		Since \(S\) is finite and the chain is irreducible, the transition
		semigroup \(P_t^\alpha=e^{tQ}\) is exponentially ergodic: there exist
		constants \(C,c>0\) such that
		\[
		\sup_{i,j\in S}
		\left|P_{ij}^\alpha(t)-\pi_j\right|
		\le Ce^{-ct},
		\qquad t\ge0;
		\]
		see, e.g., \cite[Section~6.6]{anderson1991_book}. For bounded \(h\),
		with
		\[
		\widetilde h:=h-\pi(h),
		\qquad
		\pi(\widetilde h)=0,
		\]
		we have
		\[
		\|P_t^\alpha\widetilde h\|_\infty
		\le C_h e^{-ct},
		\qquad t\ge0,
		\]
		where \(C_h:=|S|C\|\widetilde h\|_\infty\). Therefore, for \(r\le s\),
		the Markov property gives
		\begin{align}\label{eq:abstract-covariance}
			\left|\Esp_i^\alpha[\widetilde h(\alpha_{r/\varepsilon})
			\widetilde h(\alpha_{s/\varepsilon})]\right|
			&=\left|\Esp_i^\alpha[\widetilde h(\alpha_{r/\varepsilon})
			(P^\alpha_{(s-r)/\varepsilon}\widetilde h)(\alpha_{r/\varepsilon})]\right|\notag\\
			&\le C_he^{-c(s-r)/\varepsilon}.
		\end{align}
		No stationarity is used here. Integrating by parts against $\Esp_yA_s$ gives
		\begin{align}\label{eq:abstract-weighted-potential}
			U_\varepsilon(T):=\sup_y\Esp_y\int_0^T e^{-cs/\varepsilon}\,dA_s
			&\le e^{-cT/\varepsilon}M(T)
			+\frac c\varepsilon\int_0^T e^{-cs/\varepsilon}M(s)\,ds\notag\\
			&\le C_T\varepsilon^\vartheta.
		\end{align}
		
		\medskip\noindent\emph{Step 2: interval and random-time estimates.}
		Lemma~\ref{lem:AF-conditioning} gives the integrability needed for Fubini.
		Condition on the base path. Since \(\tau\) is measurable with respect to
		that path, both \(A\) and \(\tau\) are then fixed. Since \(A\) is continuous,
		its Stieltjes measure \(dA\) is atomless, so the diagonal has zero
		\(dA\otimes dA\)-measure.
		Therefore,
		\[
		\Esp_{y,i}|J_\varepsilon(\tau)|^2
		\le 2C_h\Esp_y\int_0^T\int_s^T
		e^{-c(r-s)/\varepsilon}\,dA_r\,dA_s
		\le 2C_h U_\varepsilon(T)\Esp_y A_T,
		\]
		where \eqref{eq:AF-conditioning} is applied to
		$F=\int_0^T e^{-cu/\varepsilon}\,dA_u$ after enlarging the remaining
		future horizon to $T$. 
		Together with
		\eqref{eq:abstract-weighted-potential} and
		our assumption \eqref{eq:abstract-A-first}, this proves
		\eqref{eq:abstract-random-time}.
		
		The same calculation on a deterministic interval \([a,b]\) yields
		\begin{equation}\label{eq:abstract-interval}
			\Esp_{y,i}|J_\varepsilon(b)-J_\varepsilon(a)|^2
			\le
			C_{T,h}\varepsilon^\vartheta
			\Esp_y(A_b-A_a),
			\qquad 0\le a\le b\le T.
		\end{equation}
		The dependence on the interval through the increment \(A_b-A_a\) will be
		important below. Indeed, if
		\[
		0=t_0<t_1<\cdots<t_N=T
		\]
		is any deterministic partition of \([0,T]\), then
		\[
		\sum_{k=0}^{N-1}
		\Esp_y\!\left(A_{t_{k+1}}-A_{t_k}\right)
		=
		\Esp_y A_T.
		\]
		Thus, summing \eqref{eq:abstract-interval} over all intervals at one dyadic
		level introduces no factor proportional to the number of intervals.
		
		\medskip\noindent\emph{Step 3: dyadic maximal bound.}
		Let \(m\in\mathbb N\), and let
		\[
		G_m=\{kT2^{-m}:0\le k\le2^m\}
		\]
		be the dyadic grid of mesh size \(T2^{-m}\) on \([0,T]\), and for
		\(0\le j\le m\) and \(0\le k<2^j\), define
		\[
		D_{j,k}
		:=
		J_\varepsilon((k+1)T2^{-j})
		-
		J_\varepsilon(kT2^{-j}),
		\]
		the increment of \(J_\varepsilon\) over the \(k\)-th dyadic interval at
		level \(j\).
		Every interval \([0,t]\), with \(t\in G_m\), admits a dyadic decomposition
		into disjoint intervals, containing at most one interval from each level
		\(0\le j\le m\). Hence the corresponding telescoping decomposition of
		\(J_\varepsilon(t)\) yields
		\[
		\max_{t\in G_m}|J_\varepsilon(t)|
		\le\sum_{j=0}^m\max_{k<2^j}|D_{j,k}|.
		\]
		At each level the intervals partition $[0,T]$, and \eqref{eq:abstract-interval} implies
		\[
		\Esp_{y,i}\max_{k<2^j}|D_{j,k}|^2
		\le\sum_{k<2^j}\Esp_{y,i}|D_{j,k}|^2
		\le C_{T,h}\varepsilon^\vartheta\Esp_y A_T.
		\]
		Therefore, by \eqref{eq:abstract-A-first} and Minkowski's inequality,
		\begin{equation}\label{eq:abstract-grid-points}
			\left(\Esp_{y,i}\max_{t\in G_m}|J_\varepsilon(t)|^2\right)^{1/2}
			\le C_{T,h}(m+1)\varepsilon^{\vartheta/2}.
		\end{equation}
		
		Choose an integer $p>\max\{2,1/\vartheta\}$ and set $\delta:=T2^{-m}$. Between grid points,
		\[
		|J_\varepsilon(t)-J_\varepsilon(k\delta)|
		\le\|\widetilde h\|_\infty(A_{(k+1)\delta}-A_{k\delta}).
		\]
		Using \eqref{eq:abstract-A-increments} and concavity of $x^{2/p}$,
		\[
		\Esp_y\max_{k<2^m}(A_{(k+1)\delta}-A_{k\delta})^2
		\le\left(\sum_{k<2^m}\Esp_y(A_{(k+1)\delta}-A_{k\delta})^p\right)^{2/p}
		\le C_{T,p}\delta^{2\vartheta-2/p}.
		\]
		Consequently,
		\begin{equation}\label{eq:abstract-balance}
			\Esp_{y,i}\sup_{t\le T}|J_\varepsilon(t)|^2
			\le C_{T,h,p}\big((m+1)^2\varepsilon^\vartheta
			+\delta^{2\vartheta-2/p}\big).
		\end{equation}
		For small $\varepsilon$, choose the dyadic mesh so that
		$\delta\asymp\varepsilon^{\vartheta/(2\vartheta-2/p)}$. Then
		$m= \log_2(T/\delta) =O(1+|\log\varepsilon|)$ and the interpolation term is
		$O(\varepsilon^\vartheta)$. The remaining values of $\varepsilon\in(0,1]$
		are absorbed into the constant using
		$|J_\varepsilon|\le\|\widetilde h\|_\infty A_T$.
		This proves the uniform-in-time bound, and completes the proof. 
	\end{proof}
	
	\begin{corollary}\label{cor:abstract-all-moments}
		Under \eqref{eq:abstract-A-first}, for every $0<\zeta<\vartheta$, 
		\begin{equation}\label{eq:abstract-all-moments-max}
			\sup_{y,i}\Esp_{y,i}\sup_{t\le T}|J_\varepsilon(t)|^2
			\le C_{T,h,\zeta}\varepsilon^\zeta.
		\end{equation}
		Here, the constant depends on $\zeta $ and the limit exponent $\vartheta $ is not attained.
	\end{corollary}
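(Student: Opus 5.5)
The corollary follows directly from Theorem~\ref{thm:abstract-averaging}\ref{item:abstract-max}; only the logarithmic factor has to be absorbed. The plan has three steps.

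\emph{Step 1.} Invoke \eqref{eq:abstract-max}. For $0<\varepsilon\le1$, uniformly in $y\in E$ and $i\in S$, it gives
\[
\Esp_{y,i}\sup_{t\le T}|J_\varepsilon(t)|^2\le C_{T,h}\,\varepsilon^\vartheta(1+|\log\varepsilon|)^2 .
\]
Taking the supremum over $(y,i)$ preserves this bound, because the constant $C_{T,h}$ does not depend on the starting states.

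\emph{Step 2.} Fix $0<\zeta<\vartheta$ and write
\[
\varepsilon^\vartheta(1+|\log\varepsilon|)^2=\varepsilon^\zeta\cdot\varepsilon^{\vartheta-\zeta}(1+|\log\varepsilon|)^2 .
\]
It remains to check that
\[
g(\varepsilon):=\varepsilon^{\vartheta-\zeta}(1+|\log\varepsilon|)^2
\]
is bounded on $(0,1]$. Substitute $x=|\log\varepsilon|\ge0$. Then $g=e^{-(\vartheta-\zeta)x}(1+x)^2$, which is continuous on $[0,\infty)$ and tends to $0$ as $x\to\infty$, so it is bounded. Its maximum is explicit and depends only on $\vartheta-\zeta$; a crude bound is
\[
\sup_{x\ge0}e^{-\eta x}(1+x)^2\le \max\{1,\,(2/\eta)^2\}\,e^{\eta},\qquad \eta=\vartheta-\zeta,
\]
obtained by maximizing over $1+x$. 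Setting $C_{T,h,\zeta}:=C_{T,h}\sup_{(0,1]}g$ gives \eqref{eq:abstract-all-moments-max}.

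\emph{Step 3.} Since $\sup_{(0,1]}g$ blows up as $\zeta\uparrow\vartheta$, the constant must depend on $\zeta$. This is the content of the remark that the limit exponent $\vartheta$ itself is not reached by this argument. There is no real obstacle here; the only point needing care is that the uniformity in the starting states is inherited from the theorem. If one also wants $\varepsilon>1$, the trivial bound $|J_\varepsilon|\le\|\widetilde h\|_\infty A_T$ together with \eqref{eq:abstract-A-moment} handles it, but the statement implicitly keeps $0<\varepsilon\le1$.
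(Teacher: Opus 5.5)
Your proposal is correct and matches the paper's proof: apply the uniform maximal bound \eqref{eq:abstract-max} and absorb $\varepsilon^{\vartheta-\zeta}(1+|\log\varepsilon|)^2$, which is bounded on $(0,1]$, into a $\zeta$-dependent constant. Your explicit bound on $\sup_{x\ge0}e^{-\eta x}(1+x)^2$ is also correct; it simply makes the paper's boundedness claim quantitative.
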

	
	\begin{proof}
		Since $\varepsilon^{\vartheta-\zeta}(1+|\log\varepsilon|)^2$ is bounded on
		$(0,1]$, the claim follows. The constant depends on $\zeta$; this argument
		does not give the limit case $\zeta=\vartheta$.
	\end{proof}
	
	\begin{remark}
The integrand may also depend on the base process, provided
the switching chain remains independent of it.
Indeed, let \(g:E\times S\to\mathbb R\) be bounded and measurable, and set
\[
\overline g(y):=\sum_{i\in S}\pi_i g(y,i).
\]
Then the same estimates apply to
\[
J_\varepsilon^g(t)
=
\int_0^t
\bigl[g(Y_s,\alpha_{s/\varepsilon})-\overline g(Y_s)\bigr]\,dA_s.
\]
Indeed, conditional on the base path, the functions
\[
h_s(i):=g(Y_s,i)-\overline g(Y_s)
\]
are deterministic, centered with respect to \(\pi\), and uniformly bounded by
\(2\|g\|_\infty\). Exponential ergodicity of the chain therefore gives, uniformly
in \(r\le s\),
\[
\left|
\Esp_i^\alpha
\left[
h_r(\alpha_{r/\varepsilon})
h_s(\alpha_{s/\varepsilon})
\right]
\right|
\le
C_g e^{-c(s-r)/\varepsilon}.
\]
The proof of Theorem~\ref{thm:abstract-averaging} then applies without change.
Consequently, \eqref{eq:abstract-random-time} and
\eqref{eq:abstract-max} remain valid, with constants depending on
\(\|g\|_\infty\).
	\end{remark}

	\begin{example}[Ordinary occupation and interior local time]
		\label{ex:abstract-benchmarks}
		For bounded nonnegative $V$, $A_t=\int_0^t V(Y_s)\,ds$ satisfies \eqref{eq:abstract-A-first} with $\vartheta=1$. A singular example, without a reflecting boundary, is the symmetric local time $\ell_t^0$ at zero of  standard one-dimensional Brownian motion $B$. 
		With $p_s(x,y)$ its heat kernel,
		\[
		\sup_x\Esp_x\ell_t^0
		=\sup_x\int_0^t p_s(x,0)\,ds
		=\sqrt{2t/\pi},
		\]
		so the bound~\eqref{eq:abstract-A-first} holds for $\vartheta=1/2$. 
		The identity
		\[
		\Esp_x\ell_t^0=\int_0^t p_s(x,0)\,ds
		\]
		follows from the occupation-density formula
		\cite[Corollary~VI.1.9]
		{revuz1999Continuous_martingales}.
		
		The exponent $\vartheta=1/2$ is attained in the following simple benchmark.
		Let \(B_0=0\) and let \(H\) be an independent sign chain on
		\(S=\{-1,1\}\) with generator and invariant law
		\[
		Q=
		\begin{pmatrix}
			-a & a\\
			a & -a
		\end{pmatrix},
		\qquad
		\pi=
		\left(\frac12,\frac12\right).
		\]
		
		Assume that \(H\) is started from \(\pi\). Then \(H\) is stationary and centered in the sense that \(\Esp(H_t)=0\) for all \(t\ge 0\), and its covariance formula reads
		\begin{equation}
			\label{eq:covariance_formula}
			\Esp[H_sH_r]=e^{-2a|s-r|}.
		\end{equation}
		
		Conditioning on the Brownian path and using the independence of \(H\) and
		\(B\), together with~\eqref{eq:covariance_formula}, gives
		\begin{align}
			\Esp\left|\int_0^T H_{s/\varepsilon}\,d\ell_s^0\right|^2
			&= \Esp\left[\int_0^T \int_0^T H_{r/\varepsilon}\,H_{s/\varepsilon}\,d\ell_r^0\,d\ell_s^0\right]
			\\
			&= \Esp\left[
			\int_0^T\int_0^T
			e^{-2a|s-r|/\varepsilon}\,
			d\ell_r^0\,d\ell_s^0
			\right].
		\end{align}
		For every nonnegative Borel function \(g\), applying the space--time
		occupation-times formula
		\cite[Exercise~VI.1.15]{revuz1999Continuous_martingales}
		to \((s,a)\mapsto g(s)\eta_\delta(a)\), where
		\[
		\eta_\delta(a):=\frac{1}{2\delta}\mathbf 1_{\{|a|<\delta\}},
		\]
		then taking expectations and letting \(\delta\downarrow0\), gives
		\[
		\Esp_x\left[\int_0^t g(s)\,d\ell_s^0\right]
		=
		\int_0^t g(s)p_s(x,0)\,ds.
		\]
		Using this identity together with Tonelli's theorem, the additivity of
		local time, and the Markov property, for every nonnegative Borel function
		\(\varphi\) on \(\{(r,s):0<r<s<T\}\),
		\[
		\begin{aligned}
			&\Esp_0\left[
			\int_0^T\int_0^s
			\varphi(r,s)\,d\ell_r^0\,d\ell_s^0
			\right]
			\\
			&\qquad=
			\Esp_0\left[
			\int_0^T
			\left(
			\int_r^T \varphi(r,s)\,d\ell_s^0
			\right)
			d\ell_r^0
			\right]
			\\
			&\qquad=
			\int_0^T
			p_r(0,0)
			\left(
			\int_0^{T-r}
			\varphi(r,r+u)\,
			p_u(0,0)\,du
			\right)dr
			\\
			&\qquad=
			\int_0^T\int_0^s
			\varphi(r,s)\,
			p_r(0,0)p_{s-r}(0,0)\,dr\,ds.
		\end{aligned}
		\]
		
		Since \(\ell^0\) is continuous, its Stieltjes measure \(d\ell^0\) is
		atomless, and therefore \(d\ell^0\otimes d\ell^0\) gives no mass to the
		diagonal. Taking
		\[
		\varphi(r,s)=e^{-2a(s-r)/\varepsilon}
		\]
		and using symmetry, we obtain
		\begin{align}
			\Esp\left|\int_0^T H_{s/\varepsilon}\,d\ell_s^0\right|^2
			&=
			2\int_{0<r<s<T}
			e^{-2a(s-r)/\varepsilon}
			p_r(0,0)p_{s-r}(0,0)\,dr\,ds
			\\
			&=
			2\int_0^T e^{-2au/\varepsilon}p_u(0,0)
			\int_0^{T-u}p_s(0,0)\,ds\,du
			\\
			&=
			\frac2\pi\int_0^T e^{-2au/\varepsilon}
			\sqrt{\frac{T-u}{u}}\,du
			\sim
			\sqrt{\frac{2T}{\pi a}}\,\varepsilon^{1/2}.
			\label{eq:interior-localtime-variance}
		\end{align}
		For the asymptotic, substitute $u=\varepsilon v$ and apply dominated
		convergence with the integrable bound
		\(\sqrt T\,e^{-2av}v^{-1/2}\). 
		
		By contrast, using the covariance formula~\eqref{eq:covariance_formula} with \(\lambda=2a/\varepsilon\),
		we get 
		\begin{align}
			\Esp\left|\int_0^T H_{s/\varepsilon}\,ds\right|^2
			&=
			\int_0^T\int_0^T e^{-\lambda|s-r|}\,dr\,ds \\
			&=
			2\int_0^T \frac{1-e^{-\lambda s}}{\lambda}\,ds \\
			&=
			2\left(
			\frac{T}{\lambda}
			-
			\frac{1-e^{-\lambda T}}{\lambda^2}
			\right)
			\sim \frac{T}{a}\varepsilon.
			\label{eq:interior-occupationtime-variance}
		\end{align}
		The asymptotics~\eqref{eq:interior-localtime-variance} and
		\eqref{eq:interior-occupationtime-variance} show that ordinary occupation
		and local-time occupation fluctuate on different scales. In particular,
		within this class the exponent \(1/2\) in the terminal-time second-moment
		estimate for local-time occupation cannot be improved. 
	\end{example}
	
	\begin{remark}\label{rem:abstract-boundary-local-time}
		Unlike the one-dimensional interior-local-time benchmark of
		Example~\ref{ex:abstract-benchmarks}, we consider here the boundary
		local time of reflected Brownian motion in a \(C^2\) domain \(D\).
		Appendix~\ref{app:averaging} shows that \(\vartheta=1/2\) in this case.
		Consequently, the random-time squared \(L^2\) error is
		\(O(\varepsilon^{1/2})\), while the maximal error is
		\(O(\varepsilon^{1/2}(1+|\log\varepsilon|)^2)\), and hence
		\(O(\varepsilon^\zeta)\) for every \(0<\zeta<1/2\).
		
		By Lemma~\ref{lem:AF-conditioning}, the first-moment estimate for the
		boundary local time also yields, for every finite \(q\ge1\),
		\[
		\sup_{x\in\overline D}\Esp_x[L_t^q]
		\le C_{T,q}t^{q/2},
		\qquad 0\le t\le T,
		\]
		and
		\[
		\sup_{x\in\overline D}
		\Esp_x[(L_v-L_u)^q]
		\le C_{T,q}(v-u)^{q/2},
		\qquad 0\le u<v\le T.
		\]
		Thus no additional moment assumption on the boundary local time is
		required beyond the short-time first-moment estimate.
	\end{remark}
	
	\section{Wentzell boundary dynamics with Markovian switching}
	\label{sec:construction}
	
	\subsection{The Brownian motion with sticky reflection and elastic killing} 
	
The Wentzell process without switching is classical
\cite{wentzell1960_on_boundary_conditions,taira,FGGR}.
We recall only the representation in which reflected Brownian motion is
first slowed down by its boundary local time and then killed through the
same local time. This representation is taken from
\cite{dovidio2026_fractional_boundary}.

In the boundary relations below, the Laplacian is understood
distributionally. If \(f\in H^1(D)\) and \(\Delta f\in L^2(D)\), the
outward normal derivative is understood weakly through
\[
\langle \partial_{\mathbf n}f,\operatorname{Tr}v\rangle
=
\int_D \nabla f\cdot\nabla v\,dx
+
\int_D (\Delta f)v\,dx,
\qquad v\in H^1(D).
\]
This defines an element of \(H^{-1/2}(\partial D)\). 
For \(f\in C^2(\overline D)\), it agrees with the classical normal derivative.

For analytic realizations of the Neumann and Wentzell--Robin Laplacians
on spaces of continuous functions, see
\cite[Theorem~3.3]{arendt2003laplacian},
\cite[Theorem~1.1]{engel2003wentzell}, and
\cite{warma2006_robin_wentzell}; see also
\cite{arendt2012sectorial,favini2008wentzell}.
For \(\rho>0\), the boundary relation
\[
\frac{\rho}{2}\Delta f
+\partial_{\mathbf n}f
+\kappa f
=0
\]
is equivalently
\[
\Delta f
+\frac{2}{\rho}\partial_{\mathbf n}f
+\frac{2\kappa}{\rho}f
=0,
\]
which is of the generalized Wentzell form treated in
\cite[Theorem~1.1]{engel2003wentzell}.
For the probabilistic interpretation in terms of sticky-reflected
diffusions, see, e.g.,
\cite{grothaus2017stochastic,bormann2026functional}.

We shall not need an exact characterization of the full generator domains.
Instead, the boundary relations displayed below specify the smooth test
classes on which the corresponding generators act as
\(\frac12\Delta\), with the normalization induced by our choice of
reflected Brownian motion and boundary local time. The processes themselves
will be used through the time-change and Feynman--Kac representations below.

\begin{definition}
	\label{def:SR_EK}
	We use the following terminology and associated smooth boundary classes.
	\begin{enuroman}
		\item The instantaneously reflected Brownian motion corresponds to
		\[
		\mathcal L^0 f=\frac12\Delta f,
		\qquad
		\partial_{\mathbf n}f=0
		\quad\text{on }\partial D.
		\]
		
		\item The homogeneous sticky reflected Brownian motion with parameter
		\(\rho\) corresponds to
		\[
		\mathcal L^{\mathrm{sticky}}f=\frac12\Delta f,
		\qquad
		\frac{\rho}{2}\Delta f+\partial_{\mathbf n}f=0
		\quad\text{on }\partial D.
		\]
		
		\item The homogeneous sticky reflected Brownian motion with elastic
		killing parameters \((\rho,\kappa)\) corresponds to
		\[
		\mathcal L^{\mathrm{el-sticky}}f=\frac12\Delta f,
		\qquad
		\frac{\rho}{2}\Delta f
		+\partial_{\mathbf n}f
		+\kappa f=0
		\quad\text{on }\partial D.
		\]
	\end{enuroman}
\end{definition}
	
	The parameter $\rho$ controls the delay created at the boundary, while
	$\kappa$ controls killing there. Multidimensional sticky reflected Brownian
	motion can be constructed by Dirichlet-form methods
	\cite{grothaus2017stochastic}; for the one-dimensional case see
	\cite{bass2014A_stochastic,engelbert2014Stochastic_differential}.
	
	Let $W^+$, $W^{(\rho)}$, and $W^{(\rho,\kappa)}$ denote the three
	processes in Definition~\ref{def:SR_EK}; the superscripts indicate the
	dependence on the boundary parameters.
	Write $\loct{W^+}{}{t}$ and $\loct{W^{(\rho)}}{}{t}$ for the corresponding
	boundary local times, and let $\gamma$ be the right inverse of
	\[
	A(t)=t+\rho\loct{W^+}{}{t},\quad t\ge 0.
	\]
	
	\begin{proposition}
		[\cite{dovidio2026_fractional_boundary}, Theorem~4.6]
		\label{prop:time_change0}
		Let $f\in C(\overline D)$ and define $u$ by
		\begin{equation}
			u(t,x) := \Esp_x \sqbraces{f(W^{(\rho,\kappa)}_t)}, \quad t \ge 0, \; x\in D.
		\end{equation}
		Then $u$ admits the representation
		\begin{equation}
			\label{eq:semigroup}
			u(t,x) = \Esp_{x} \sqbraces{f(W^{+}_{\gamma(t)}) e^{-\kappa \loct{W^+}{}{\gamma(t)}}}, \quad t \ge 0, \; x\in D.
		\end{equation}
	\end{proposition}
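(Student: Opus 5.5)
The plan is to identify the process obtained from \(W^+\) by the time change \(\gamma\) and then by killing at local-time rate \(\kappa\), and to check that its semigroup acts as \(\frac12\Delta\) on smooth functions satisfying the Wentzell relation. Set \(X_t:=W^+_{\gamma(t)}\). Since \(A(t)=t+\rho L_t\) is a continuous, strictly increasing PCAF of \(W^+\) with \(A(t)\to\infty\), its right inverse \(\gamma\) is continuous and strictly increasing, and \(\gamma(t)\) is a stopping time for each \(t\). By the classical theory of time changes by PCAFs, \(X\) is then a strong Markov process on \(\overline D\). I would first show that \(X\) is the sticky reflected Brownian motion \(W^{(\rho)}\) and that its boundary local time is \(L^X_t=L_{\gamma(t)}\). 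After that, the elastic killing at rate \(\kappa\,dL^X\) is imposed through the multiplicative functional \(e^{-\kappa L_{\gamma(t)}}\). Granting that \(W^{(\rho,\kappa)}\) is this killed process, its semigroup is \(\Esp_x[f(X_t)e^{-\kappa L^X_t}]\), which is \eqref{eq:semigroup}.

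The identification proceeds by Itô's formula. Take \(f\in C^2(\overline D)\) and write
\[
f(W^+_s)=f(x)+M_s+\int_0^s\tfrac12\Delta f(W^+_r)\,dr-\int_0^s\partial_{\mathbf n}f(W^+_r)\,dL_r,
\]
where \(M\) is a martingale. Multiply by \(e^{-\kappa L_s}\) and absorb the killing term. Then substitute \(s=\gamma(t)\), using \(d\gamma(t)=dt-\rho\,dL_{\gamma(t)}\). This identity holds because \(t=\gamma(t)+\rho L_{\gamma(t)}\).

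In the resulting expression, the \(dL\)-terms combine to
\[
-\bigl(\tfrac{\rho}{2}\Delta f+\partial_{\mathbf n}f+\kappa f\bigr)(X_t)\,e^{-\kappa L_{\gamma(t)}}\,dL_{\gamma(t)}.
\]
This vanishes exactly under the Wentzell relation, because \(dL\) is carried by the set of times at which the process is on \(\partial D\). What remains is
\[
e^{-\kappa L_{\gamma(t)}}f(X_t)-f(x)-\int_0^t e^{-\kappa L_{\gamma(u)}}\tfrac12\Delta f(X_u)\,du = M_{\gamma(t)}\text{-type term}.
\]
The right-hand side is a time-changed local martingale. It is a true martingale by optional stopping, since \(\gamma(t)\le t\) is bounded.

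From this martingale problem it follows that the right-hand side of \eqref{eq:semigroup} defines a sub-Markov semigroup whose generator extends \(\frac12\Delta\) on the smooth class of Definition~\ref{def:SR_EK}(iii). To conclude equality with \(\Esp_x f(W^{(\rho,\kappa)}_t)\), I would invoke uniqueness. The Wentzell--Robin Laplacian generates a Feller semigroup on \(C(\overline D)\) by \cite{engel2003wentzell,warma2006_robin_wentzell}, and the smooth class is a core for it. Both semigroups satisfy the same martingale problem on this core, so they coincide on \(C(\overline D)\).

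I expect this core/uniqueness step to be the main obstacle. Itô's formula requires \(f\in C^2\) up to the boundary, so one must show that the \(C^2\) functions satisfying the boundary relation are dense in the graph norm. I would try to get this from elliptic regularity of the resolvent for the \(C^2\) domain: solutions of \(\lambda u-\frac12\Delta u=g\) with \(g\) smooth are smooth up to \(\partial D\). If that is too delicate, the fallback is to take the time-change representation as the definition of \(W^{(\rho,\kappa)}\), as \cite{dovidio2026_fractional_boundary} effectively does, and only verify consistency on the smooth class.
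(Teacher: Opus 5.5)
Your It\^o computation is correct. Stopping $e^{-\kappa L_s}f(W^+_s)$ at $\gamma(t)\le t$ and writing $ds=dA(s)-\rho\,dL_s$ collapses the $dL$-terms to the Wentzell expression. Let $P_tf(x)$ denote the right-hand side of \eqref{eq:semigroup}, and let $\mathcal C$ be the class of $C^2(\overline D)$ functions with $\frac{\rho}{2}\Delta f+\partial_{\mathbf n}f+\kappa f=0$ on $\partial D$. You then get $P_tf-f=\int_0^tP_u(\tfrac12\Delta f)\,du$ for every $f\in\mathcal C$. The paper performs the same calculation later, in Theorem~\ref{thm:quenched_sticky_elastic} and Step~4 of Theorem~\ref{thm:annealed-sticky-elastic-semigroup}.

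This only shows that the generator of $P_t$ extends $\frac12\Delta|_{\mathcal C}$. Definition~\ref{def:SR_EK} specifies $W^{(\rho,\kappa)}$ through nothing more than this action, so the proof rests entirely on the uniqueness step you flag: $(\lambda-\frac12\Delta)(\mathcal C)$ must be dense in $C(\overline D)$ for some $\lambda>0$. Your proposed justification fails under the standing hypothesis. For $f\in\mathcal C$ with $\lambda f-\frac12\Delta f=g$, the Wentzell relation becomes the Robin condition $\partial_{\mathbf n}f+(\kappa+\rho\lambda)f=\rho g$ on $\partial D$. Schauder theory gives $f\in C^2(\overline D)$ when $\partial D\in C^{2,\alpha}$. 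For a merely $C^2$ boundary one only has $f\in W^{2,p}(D)$ for all $p<\infty$, hence $f\in C^{1,\alpha}(\overline D)$; neither $C^2$ regularity nor ``smoothness up to $\partial D$'' is guaranteed. The fallback of taking \eqref{eq:semigroup} as the definition of $W^{(\rho,\kappa)}$ turns the proposition into a tautology.

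The paper gives no proof of its own. It quotes \cite[Theorem~4.6]{dovidio2026_fractional_boundary} as a special case of the time-change theory for symmetric Markov processes \cite{chen-fukushima}, where the identification is made at the level of Dirichlet forms:
\begin{enuroman}
\item $t+\rho L_t$ has Revuz measure $dx+\frac{\rho}{2}\sigma$ of full support;
\item the time-changed process is symmetric with respect to this measure, with the same energy $\frac12\int_D|\nabla u|^2\,dx$ on $H^1(D)$;
\item killing by $e^{-\kappa L}$ adds $\frac{\kappa}{2}\int_{\partial D}u^2\,d\sigma$ to the form.
\end{enuroman}
Green's formula then yields the Wentzell relation. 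No smooth test class or boundary regularity is needed, because the process is determined by its form.

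To keep your semigroup route, enlarge the test class instead of seeking $C^2$ solutions. Approximate $f\in W^{2,p}(D)$, $p>d$, by smooth functions in $W^{2,p}$. These converge in $C^1(\overline D)$, which handles the martingale and $dL$-terms. The Gaussian bound \eqref{eq:neumann-heat-kernel-upper-bound} gives $\sup_x\Esp_x\int_0^T|g(W^+_s)|\,ds\le C_T\|g\|_{L^p(D)}$ for $p>d/2$, which handles the $ds$-term uniformly in time. The Dynkin identity then holds for all $f\in W^{2,p}(D)$ with $\Delta f\in C(\overline D)$ satisfying the Wentzell relation. This class contains the Robin resolvent solutions for every $g\in C^1(\overline D)$, a dense set. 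It also lies in the domain of the Wentzell--Robin generator on $C(\overline D)$, with distributional Laplacian and weak normal derivative. Hence the two resolvents agree on $C(\overline D)$, and Laplace inversion, using right-continuity in $t$, gives \eqref{eq:semigroup}. Alternatively, assume $\partial D\in C^{2,\alpha}$, where your elliptic-regularity argument does work.
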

	
	This is a special case of the general time-change theory for symmetric Markov
	processes \cite{chen-fukushima}.
	
	\begin{remark}
		If $\kappa=0$, \eqref{eq:semigroup} reduces to
		\begin{equation}
			\Esp_x \sqbraces{f(W^{(\rho)}_t)} = \Esp_{x} \sqbraces{f(W^{+}_{\gamma(t)}) }, 
		\end{equation}
		so $W^{(\rho)}$ and $W^{+}_{\gamma(\cdot)}$ have the same law.
		More generally, for $\kappa\in[0,\infty)$, the semigroups of $W^{(\rho,\kappa)}$, $W^{(\rho)}$, and $W^+$ are linked by the identity
		\begin{equation}
			\Esp_x \sqbraces{f(W^{(\rho,\kappa)}_t)} = \Esp_{x} \sqbraces{f(W^{(\rho)}_{t}) e^{-\kappa \loct{W^{(\rho)}}{}{t}}}
			= \Esp_{x} \sqbraces{f(W^{+}_{\gamma(t)}) e^{-\kappa \loct{W^+}{}{\gamma(t)}}} , \quad t \ge 0, \; x\in D.
		\end{equation}
	\end{remark}
	
	\subsection{Constructing the switching process}
	\label{subsec:operational-time-construction}
	
	We now let the boundary regime switch. The key convention is where the
	chain is run: it is indexed by the operational time of the reflected
	Brownian motion, not the physical time.
	This same clock is used in the homogenization argument.
	
	Set the augmented state space
	\[
	E:=\overline D\times S.
	\]
	On $E$ consider the product process
	\[
	Z_t:=(W_t^+,\alpha_t),\qquad t\ge0.
	\]
	Define the clock: the continuous increasing additive functional
	\begin{equation}
		\label{eq:operational-clock}
		A(t)
		:=
		t+\int_0^t \rho(\alpha_s)\,dL_s,
		\qquad t\ge0,
	\end{equation}
	and its true right-inverse $t\mapsto \gamma(t) $, characterized by
	\begin{equation}
		\label{eq:operational-clock-inverse}
		\gamma(t)
		:=
		\inf\{s\ge0:A(s)>t\},
		\qquad t\ge0.
	\end{equation}
	Because $A(t)-A(s)\ge t-s$, the clock is strictly increasing and therefore its right-inverse $\gamma$ is a true inverse.
	Also, $\gamma $ is $1$-Lipschitz: $\gamma(t) - \gamma(s) \le t-s $.
	
	The time-changed regime and spatial components of $Z $ are
	\begin{equation}
		\label{eq:joint-time-changed-process}
		I_t:=\alpha_{\gamma(t)},
		\qquad
		X_t:=W^+_{\gamma(t)},
	\end{equation}
	and the elastic killing functional is
	\begin{equation}
		\label{eq:joint-killing-functional}
		K(t)
		:=
		\int_0^{\gamma(t)}\kappa(\alpha_s)\,dL_s,
		\qquad
		M_t:=e^{-K(t)}.
	\end{equation}
	To realize the killing explicitly, we enlarge the probability space by an
	independent $\mathcal E\sim\operatorname{Exp}(1)$ random variable, and set
	\[
	\zeta:=\inf\{t\ge0:K(t)>\mathcal E\}.
	\]
	The time $\zeta $ is the killing time: at $\zeta$ the process is sent to the cemetery point $\dagger$.
	
	\begin{proposition}
		\label{prop:operational-time-process}
		The killed process
		\[
		\mathcal Y_t
		:=
		\begin{cases}
			(X_t, I_t),&t<\zeta,\\
			\dagger,&t\ge\zeta,
		\end{cases}
		\]
		is a c\`adl\`ag strong Markov process on
		\[
		E_\dagger
		:=
		(\overline D \times S)\cup\{\dagger\}.
		\]
		For every \(\Phi\in B_b(E)\), its family of sub-Markov transition operators is characterized by
		\begin{equation}
			\label{eq:construction-semigroup}
			(\mathcal S_t\Phi)(x,i)
			=
			\Esp_{x,i}\!\left[
			\Phi(X_t,I_t)\mathbf 1_{\{t<\zeta\}}
			\right]
			=
			\Esp_{x,i}\!\left[
			\Phi\!\left(W^+_{\gamma(t)},\alpha_{\gamma(t)}\right)
			e^{-K(t)}
			\right], 
		\end{equation}
		$t\ge0 $, $(i,x)\in E $.
	\end{proposition}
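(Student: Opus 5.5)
The plan is to view \(\mathcal Y\) as a time change of the product process \(Z=(W^+,\alpha)\), followed by killing through a multiplicative functional. Standard results for time changes by strictly increasing continuous additive functionals, and for killing by multiplicative functionals, then give the strong Markov property.

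\emph{Step 1: \(Z\) is strong Markov.} Since \(W^+\) and \(\alpha\) are independent strong Markov processes (Feller on compact \(\overline D\) and finite \(S\)), the product \(Z\) is a Feller, hence strong Markov, process on \(E\). It is realized on the product canonical space with shifts \(\theta_t=(\theta^W_t,\theta^\alpha_t)\). The boundary local time \(L\) is a PCAF of \(W^+\), hence also of \(Z\).

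\emph{Step 2: additive functionals.} I would check that \(A(t)=t+\int_0^t\rho(\alpha_s)\,dL_s\) and \(\int_0^t\kappa(\alpha_s)\,dL_s\) are continuous additive functionals of \(Z\). This amounts to a change of variables in the Stieltjes integral, using \(L_{t+s}=L_t+L_s\circ\theta_t\) together with \(\alpha_{t+u}=\alpha_u\circ\theta_t\). Since \(A(t)-A(s)\ge t-s\), the clock \(A\) is strictly increasing and continuous, and \(A(t)\to\infty\). Consequently each \(\gamma(t)\) is a finite \((\mathcal F_t)\)-stopping time, since \(\{\gamma(t)\le s\}=\{A(s)\ge t\}\). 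The family \(\gamma\) satisfies the cocycle relation \(\gamma(t+s)=\gamma(t)+\gamma(s)\circ\theta_{\gamma(t)}\).

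\emph{Step 3: time change and killing.} By the classical time-change theorem (Blumenthal--Getoor, or \cite{chen-fukushima}), \((X_t,I_t)=Z_{\gamma(t)}\) is strong Markov with respect to \(\mathcal G_t:=\mathcal F_{\gamma(t)}\). It is continuous in \(x\) and càdlàg in \(i\), because \(\gamma\) is continuous and \(Z\) is càdlàg. Moreover, \(K(t)=\int_0^{\gamma(t)}\kappa(\alpha_s)\,dL_s\) is a continuous additive functional of the time-changed process, by the cocycle relation. Hence \(M_t=e^{-K(t)}\) is a continuous decreasing multiplicative functional. Killing at \(\zeta\) with an independent exponential variable then yields a càdlàg strong Markov process on \(E_\dagger\) (Sharpe / Blumenthal--Getoor, killing by multiplicative functionals).

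\emph{Step 4: semigroup formula.} Formula \eqref{eq:construction-semigroup} comes from conditioning on \(Z\). We have \(\Prb(\zeta>t\mid Z)=\Prb(\mathcal E\ge K(t)\mid Z)=e^{-K(t)}\), because \(K\) is continuous and \(\mathcal E\) is independent of \(Z\). The semigroup property of \(\mathcal S_t\) follows from the strong Markov property of \(Z\) at the stopping time \(\gamma(t)\), combined with the multiplicativity \(M_{t+s}=M_t\,M_s\circ\theta_{\gamma(t)}\).

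\emph{Main obstacle.} The delicate point is the strong Markov property at stopping times of the new filtration \(\mathcal G\). For a \(\mathcal G\)-stopping time \(\sigma\), one must show that \(\gamma(\sigma)\) is an \(\mathcal F\)-stopping time and that \(\mathcal G_\sigma=\mathcal F_{\gamma(\sigma)}\). This is where the continuity and strict monotonicity of \(A\) enter. I would cite the standard lemma for this and verify only its hypotheses (continuity, strict increase, and divergence of \(A\)), with the usual augmentation handled on the enlarged space that carries \(\mathcal E\).
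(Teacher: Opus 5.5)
Your proposal is correct and follows essentially the same route as the paper. The paper time-changes the product process $Z=(W^+,\alpha)$ by the inverse $\gamma$ of the strictly increasing continuous clock $A$, uses the cocycle rule for $\gamma$ to make $e^{-K(t)}$ with $K=B\circ\gamma$ a multiplicative functional of $Z_{\gamma(\cdot)}$, kills through it, and identifies the transition operators by conditioning on $Z$ and using the independence of $\mathcal E$. Your additional points on the semigroup property and on the identity $\mathcal G_\sigma=\mathcal F_{\gamma(\sigma)}$ are exactly the hypotheses the paper hands off to the standard Blumenthal--Getoor time-change and killing results.
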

	
	\begin{proof}
		The independent processes \(W^+\) and \(\alpha\) are strong Markov, hence
		their product
		\[
		Z_t=(W_t^+,\alpha_t)
		\]
		is a c\`adl\`ag strong Markov process on
		\(E=\overline D\times S\).
		Its path regularity follows from the continuity of \(W^+\) and the
		c\`adl\`ag paths of \(\alpha\).
		
		\emph{Step 1: Applying the time-change.} The clock
		\[
		A(t)=t+\int_0^t\rho(\alpha_s)\,dL_s
		\]
		is a continuous additive functional of \(Z\), by the additivity of the
		boundary local time. Moreover, for \(0\le s\le t\),
		\[
		A(t)-A(s)
		=
		t-s+\int_s^t\rho(\alpha_u)\,dL_u
		\ge t-s.
		\]
		Hence \(A\) is strictly increasing and tends to infinity, and its inverse
		\(\gamma\) is finite, continuous, nondecreasing, and \(1\)-Lipschitz:
		\[
		A(\gamma(t))=t,
		\qquad
		|\gamma(t)-\gamma(s)|\le |t-s|.
		\]
		
		The inverse clock satisfies the usual composition rule
		\[
		\gamma(t+s)
		=
		\gamma(t)+\gamma(s)\circ\theta_{\gamma(t)},
		\]
		as well as the corresponding stopping-time version; see
		\cite[Chapter~V, Proposition~2.3]{BlumenthalGetor68}.
		
		By the standard time-change theorem for continuous additive functionals,
		the process
		\[
		\widehat Z_t:=Z_{\gamma(t)}
		=
		\bigl(W^+_{\gamma(t)},\alpha_{\gamma(t)}\bigr)
		=
		(X_t,I_t)
		\]
		is strong Markov with respect to the time-changed filtration
		\[
		\widehat{\mathcal F}_t=\mathcal F_{\gamma(t)};
		\]
		see \cite[Chapter~V, Proposition~2.3 and Exercise~2.11]{BlumenthalGetor68} or \cite[Theorem~10.10]{dynkin1965Markov_processes}.
		
		\emph{Step 2: Applying the killing functional.} Since \(\gamma\) is continuous and nondecreasing, \(X_t=W^+_{\gamma(t)}\)
		is continuous and \(I_t=\alpha_{\gamma(t)}\) is c\`adl\`ag. Hence
		\(\widehat Z=(X,I)\) is c\`adl\`ag.
		
		Define the operational-time killing functional
		\[
		B(t):=\int_0^t\kappa(\alpha_s)\,dL_s.
		\]
		Then \(B\) is a continuous additive functional of \(Z\), and
		\[
		K(t)=B(\gamma(t)).
		\]
		By the inverse-clock composition rule,
		\[
		K(t+s)=K(t)+K(s)\circ\widehat\theta_t,
		\]
		with the corresponding stopping-time version. Consequently,
		\[
		M_t:=e^{-K(t)}
		\]
		is a continuous strong multiplicative functional of \(\widehat Z\); see
		\cite[Chapter~III, Definition~3.11]{BlumenthalGetor68}.
		
		The standard multiplicative-functional construction therefore yields the
		killed subprocess
		\[
		\mathcal Y_t=
		\begin{cases}
			(X_t,I_t),&t<\zeta,\\
			\dagger,&t\ge\zeta,
		\end{cases}
		\]
		which is a c\`adl\`ag strong Markov process; see
		\cite[Chapter~III, Theorem~3.3 and Proposition~3.12]{BlumenthalGetor68}.

		\emph{Step 3: Identification of transition law.} 
		Since \(K(t)\) is measurable with respect to the path of \(Z\), independence
		of \(\mathcal E\sim\operatorname{Exp}(1)\) gives
		\[
		\Prb(t<\zeta\mid \sigma(Z_s:s\ge0))
		=
		\Prb(\mathcal E\ge K(t)\mid \sigma(Z_s:s\ge0))
		=
		e^{-K(t)}.
		\]
		Hence, for every bounded Borel function \(\Phi\),
		\begin{align}
			\Esp_{x,i}\!\left[
			\Phi(X_t,I_t)\mathbf 1_{\{t<\zeta\}}
			\right]
			&=
			\Esp_{x,i}\!\left[
			\Phi(X_t,I_t)e^{-K(t)}
			\right]
			\\
			&=
			\Esp_{x,i}\!\left[
			\Phi\!\left(
			W^+_{\gamma(t)},\alpha_{\gamma(t)}
			\right)
			e^{-K(t)}
			\right].
		\end{align}
		This is \eqref{eq:construction-semigroup}.
	\end{proof}
	
	\begin{remark}
		The construction also admits a segment-by-segment interpretation.
		While the regime is \(i\), the reflected motion evolves with clock
		\(s+\rho_iL_s\), until either the Markov chain leaves state \(i\) or
		elastic killing occurs through the local-time hazard
		\(\kappa_i\,dL_s\). At a switching event the spatial position is
		unchanged and the evolution continues in the new regime. 
		This is consistent with the classical piecing-out viewpoint of
		Ikeda--Nagasawa--Watanabe
		\cite{ikeda1996_construction_piecingout}.
	\end{remark}

	\section{Quantitative Wentzell homogenization}
	\label{sec:homogenization}
	
	We now turn to quantitative homogenization of Wentzell boundary conditions in the fast-switching regime.
	To achieve this, we specialize the abstract theorem by taking the additive functional
	\(A\) to be the boundary local time \(L\). 
	We proceed in two stages.
	First, we homogenize the clock, which yields the purely sticky case
	of~\eqref{eq:intro-switching-wentzell}, with \(\kappa_i=0\) for all
	\(i\in S\). 
	We then homogenize the killing functional and transfer the resulting
	estimates to the quenched Feynman--Kac observable, obtaining the general
	switching Wentzell case.
	
Throughout this section, \(x\in\overline D\) denotes the starting point of the
reflected Brownian motion, while the environment starts in equilibrium:
\[
\alpha_0\sim\pi.
\]
We write
\[
\Prb^\alpha:=\Prb_\pi^\alpha,
\qquad
\Esp^\alpha:=\Esp_\pi^\alpha,
\]
and, when needed,
\[
\Prb_{x,\pi}:=\Prb_x^W\otimes\Prb_\pi^\alpha,
\qquad
\Esp_{x,\pi}
\]
for the corresponding joint law and expectation.
	
	Under the standing assumptions of Section~\ref{ssec:notation}, fix
	\(T>0\). For \(\varepsilon\in(0,1]\), set
	\[
	\alpha_s^\varepsilon:=\alpha_{s/\varepsilon}.
	\]
	Recall that
	\begin{equation}
		\label{eq:effective-rho-kappa}
		\overline\rho:=\sum_{i\in S}\rho(i)\pi_i,
		\qquad
		\overline\kappa:=\sum_{i\in S}\kappa(i)\pi_i.
	\end{equation}
	
	We define
	\[
	A_\varepsilon(t)
	:=
	t + \int_0^t\rho(\alpha_s^\varepsilon)\,dL_s,
	\qquad
	\gamma_\varepsilon:=A_\varepsilon^{-1},
	\]
	and
	\[
	A_0(t) := t+\overline\rho L_t,
	\qquad
	\gamma_0:=A_0^{-1}.
	\]
	For \(t\geq0\), set
	\[
	\widetilde X_t^\varepsilon
	:=
	W^+_{\gamma_\varepsilon(t)},
	\qquad
	\widetilde X_t^0
	:=
	W^+_{\gamma_0(t)}.
	\]
	Since the switching and homogenized processes are constructed from the
	same reflected Brownian reference process \(W^+\), we refer to this joint
	representation as the \emph{common-reference coupling}.
	
	Let \(f:\overline D\to\mathbb R\) be bounded and Lipschitz. Define
	\begin{equation}
		\label{eq:killing-functionals}
		K_\varepsilon(t)
		:=
		\int_0^{\gamma_\varepsilon(t)}
		\kappa(\alpha_{s/\varepsilon})\,dL_s,
		\qquad
		K_0(t)
		:=
		\overline\kappa L_{\gamma_0(t)}.
	\end{equation}
	Theorem~\ref{thm:abstract-averaging} shows that \(A_\varepsilon\) and
	\(K_\varepsilon\) average to \(A_0\) and \(K_0\), respectively, as $\varepsilon \downarrow 0 $.
	
	For a fixed realization of the environment \(\alpha\), define the quenched
	Feynman--Kac operator
	\begin{equation}
		\label{eq:quenched-semigroup}
		u^\varepsilon_\alpha(t,x)
		:=
		\Esp_x^W\left[
		f\left(W^+_{\gamma_\varepsilon(t)}\right)
		e^{-K_\varepsilon(t)}
		\right],
	\end{equation}
	where \(\Esp_x^W\) denotes expectation only with respect to \(W^+\), with the
	path of \(\alpha\) frozen. Define also the deterministic limit
	\begin{equation}
		\label{eq:limit-semigroup}
		u^0(t,x)
		:=
		\Esp_x^W\left[
		f\left(W^+_{\gamma_0(t)}\right)
		e^{-K_0(t)}
		\right].
	\end{equation}
	
	The following estimate controls the difference between quenched and homogenized
	observables in expectation over the environment.
	The proof is the object of Section~\ref{sec:proof_Wentzell}. 
	
	\begin{theorem}[Quantitative sticky-elastic homogenization]
		\label{thm:SE_homogenization_1}
		For every bounded Lipschitz function
		\(f:\overline D\to\mathbb R\) and every time horizon \(T>0\), the following estimates hold.
		\begin{enuroman}
			\item \label{item:marginal}
			For every \(0<\beta<1/8\), there exists
			\(C=C(f,T,\beta)>0\), independent of \(\varepsilon\), such that
			\begin{equation}
				\label{eq:semigroup-rate-nonmax}
				\sup_{x\in\overline D} \sup_{0\le t\le T}
				\Esp^\alpha\left[
				|u^\varepsilon_\alpha(t,x)-u^0(t,x)|
				\right]
				\le
				C\varepsilon^\beta.
			\end{equation}
			
			\item \label{item:sup}
			For every \(0<\beta<1/8\), there exists
			\(C=C(f,T,\beta)>0\), independent of \(\varepsilon\), such that
			\begin{equation}
				\label{eq:semigroup-rate}
				\sup_{x\in\overline D} \Esp^\alpha\left[
				\sup_{0\le t\le T}
				|u^\varepsilon_\alpha(t,x)-u^0(t,x)|
				\right]
				\le
				C\varepsilon^\beta.
			\end{equation}
		\end{enuroman}
		
		Consequently, for every \(x\in\overline D\), Markov's inequality yields
		\[
		\sup_{0\le t\le T}
		|u^\varepsilon_\alpha(t,x)-u^0(t,x)|
		\longrightarrow 0
		\qquad\text{in }\Prb^\alpha\text{-probability}.
		\]
		Hence, for every fixed \(t\in[0,T]\),
		\[
		u^\varepsilon_\alpha(t,x)
		\longrightarrow
		u^0(t,x)
		\qquad\text{in }\Prb^\alpha\text{-probability}.
		\]
	\end{theorem}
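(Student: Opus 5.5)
We would work throughout under the common-reference coupling and bound the difference pathwise:
\[
|u^\varepsilon_\alpha(t,x)-u^0(t,x)|
\le \Esp_x^W\Bigl[\mathrm{Lip}(f)\,\bigl|W^+_{\gamma_\varepsilon(t)}-W^+_{\gamma_0(t)}\bigr|\wedge 2\|f\|_\infty + \|f\|_\infty\,|K_\varepsilon(t)-K_0(t)|\Bigr],
\]
using $|e^{-a}-e^{-b}|\le|a-b|$ for $a,b\ge0$. After taking $\Esp^\alpha$ and using Fubini, both parts of the theorem reduce to controlling $\Esp_{x,\pi}$ of two quantities: the clock error $\delta_\varepsilon(t):=|\gamma_\varepsilon(t)-\gamma_0(t)|$ transported through the path, and the killing error. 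For part~\ref{item:sup} we take $\sup_{t\le T}$ inside $\Esp_x^W$ before averaging, which is harmless because the pathwise bound is uniform in $t$.

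\emph{Step 1: the clock.} We set $J^\rho_\varepsilon(s):=\int_0^s(\rho(\alpha_{r/\varepsilon})-\overline\rho)\,dL_r$, so that $A_\varepsilon=A_0+J^\rho_\varepsilon$. Since $A_0(u)-A_0(v)\ge u-v$ for $u\ge v$, evaluating at $s=\gamma_\varepsilon(t)$ gives $A_0(\gamma_\varepsilon(t))-A_0(\gamma_0(t))=-J^\rho_\varepsilon(\gamma_\varepsilon(t))$, and hence
\[
\sup_{t\le T}\delta_\varepsilon(t)\le \sup_{s\le T}|J^\rho_\varepsilon(s)|,
\]
using $\gamma_\varepsilon,\gamma_0\le t\le T$. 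By Theorem~\ref{thm:abstract-averaging}\ref{item:abstract-max}, with $A=L$ and $\vartheta=1/2$ (Remark~\ref{rem:abstract-boundary-local-time}), and Corollary~\ref{cor:abstract-all-moments}, this yields $\Esp\sup_t\delta_\varepsilon(t)\le C\varepsilon^{\zeta/2}$ for every $\zeta<1/2$, that is, every power below $1/4$.

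\emph{Step 2: passage through the path (main obstacle).} We must bound $\Esp\sup_t|W^+_{\gamma_\varepsilon(t)}-W^+_{\gamma_0(t)}|$. We would use the Hölder regularity of reflected Brownian motion, assumed from the appendix: for each $\eta<1/2$ there is a random constant $H_\eta$ with $|W^+_u-W^+_v|\le H_\eta|u-v|^\eta$ on $[0,T]$ and all moments of $H_\eta$ finite. Cauchy--Schwarz then gives
\[
\Esp\sup_t|W^+_{\gamma_\varepsilon}-W^+_{\gamma_0}|\le \|H_\eta\|_{L^2}\,\bigl(\Esp\sup_t\delta_\varepsilon^{2\eta}\bigr)^{1/2}\le C\bigl(\Esp\sup_t\delta_\varepsilon^{2}\bigr)^{\eta/2},
\]
by Jensen since $2\eta\le 2$. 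Step~1 in squared form gives $\Esp\sup\delta_\varepsilon^2\le C\varepsilon^{\zeta}$, so this term is $O(\varepsilon^{\eta\zeta/2})$. Letting $\eta\uparrow1/2$ and $\zeta\uparrow1/2$ gives every power below $1/8$; this is the origin of the threshold. The delicate points are that $\gamma_\varepsilon$ depends on $\alpha$ while the Hölder constant does not, which is handled by the independence and the product structure, and that the random times are not stopping times. The latter is the reason the random-time form of Theorem~\ref{thm:abstract-averaging} and the maximal bound are used rather than optional stopping.

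\emph{Step 3: killing.} We decompose
\[
K_\varepsilon(t)-K_0(t)=J^\kappa_\varepsilon(\gamma_\varepsilon(t))+\overline\kappa\bigl(L_{\gamma_\varepsilon(t)}-L_{\gamma_0(t)}\bigr),
\]
where $J^\kappa_\varepsilon$ is defined as $J^\rho_\varepsilon$ with $\kappa$ in place of $\rho$. The first term is bounded by $\sup_{s\le T}|J^\kappa_\varepsilon(s)|$, which is $O(\varepsilon^{\zeta/2})$ in $L^1$ by the maximal theorem. For the second term we use the Hölder modulus of $L$: by the increment moments in Remark~\ref{rem:abstract-boundary-local-time} and Kolmogorov's criterion, $L$ is $\eta$-Hölder for every $\eta<1/2$ with integrable constant. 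The argument of Step~2 then bounds this term by $O(\varepsilon^{\eta\zeta/2})$ as well. Combining Steps~1--3 gives~\eqref{eq:semigroup-rate}, uniformly in $x$ because all constants are uniform in the starting point. Part~\ref{item:marginal} follows because it is dominated by the supremum version. The convergence in probability then follows from Markov's inequality.
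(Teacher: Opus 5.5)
Your proof is correct. For part~(ii) it follows the paper's argument closely:
\begin{enumerate}
\item the common-reference coupling and the Lipschitz/exponential split;
\item the expansion property of the clock, which dominates $\sup_t|\gamma_\varepsilon(t)-\gamma_0(t)|$ by the maximal averaging functional;
\item H\"older moments of $W^+$ and $L$, which give $\varepsilon^{\theta\zeta/2}$ with $\theta,\zeta\uparrow1/2$.
\end{enumerate}

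The genuine difference is where you evaluate the averaging functionals.
\begin{itemize}
\item You use $\gamma_\varepsilon(t)$: the expansion of $A_0$, and the split $J^\kappa_\varepsilon(\gamma_\varepsilon(t))+\overline\kappa(L_{\gamma_\varepsilon(t)}-L_{\gamma_0(t)})$.
\item The paper uses $\gamma_0(t)$: the expansion of $A_\varepsilon$, and $K_\varepsilon(t)-K_0(t)=\int_0^{\gamma_0(t)}(\kappa(\alpha_{s/\varepsilon})-\overline\kappa)\,dL_s+\int_{\gamma_0(t)}^{\gamma_\varepsilon(t)}\kappa(\alpha_{s/\varepsilon})\,dL_s$.
\end{itemize}
Because $\gamma_0(t)$ is measurable with respect to the reflected path, the paper can invoke Theorem~\ref{thm:abstract-averaging}\ref{item:abstract-random-time}. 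It proves part~(i) separately from the log-free bound $\Esp_{x,\pi}\Delta_\varepsilon(t)^2\le C\varepsilon^{1/2}$.

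Your $\gamma_\varepsilon(t)$ depends on $\alpha$, so only the maximal bound is available to you. Your remark that the random-time form of the averaging theorem is being used is therefore moot, since you never actually use it. You obtain part~(i) from part~(ii) via $\sup_t\Esp^\alpha\le\Esp^\alpha\sup_t$.

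For this statement, with the open range $\beta<1/8$, nothing is lost and your route is shorter. The paper's fixed-time route pays off later: its log-free $\varepsilon^{1/4}$ clock estimate at deterministic times is reused in Theorem~\ref{thm:improved-smooth-observable}.

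Deriving the H\"older moments of $L$ from the Kac increment bounds of Remark~\ref{rem:abstract-boundary-local-time} plus Kolmogorov's criterion is also a valid shortcut. It replaces the Tanaka--BDG argument of Appendix~\ref{app:Holder_continuity}.
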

	
	The proof relies on the following estimates.
	\begin{itemize}
		\item The first consists of the local-time averaging estimates of
		Proposition~\ref{prop:LTA}. Namely,
		\begin{equation}
			\label{eq:local-time-averaging-assumption-nonmax}
			\sup_{x\in\overline D}
			\Esp_{x,\pi} \left[
			\left|
			\int_0^\tau h(\alpha_{s/\varepsilon})\,dL_s
			\right|^2
			\right]
			\le
			C_T(h)\varepsilon^{1/2},
		\end{equation}
		and that, for every $0<\zeta<1/2$,
		\begin{equation}
			\label{eq:local-time-averaging-assumption-max}
			\sup_{x\in\overline D}
			\Esp_{x,\pi} \left[
			\sup_{t\in [0,T]}\left|
			\int_0^t h(\alpha_{s/\varepsilon})\,dL_s
			\right|^2
			\right]
			\le
			C_{T,h,\zeta}\varepsilon^{\zeta}.
		\end{equation}
		These are proved in Appendix~\ref{app:averaging} by first showing that the boundary local time \(L\) satisfies the short-time estimate~\eqref{eq:abstract-A-first} for $\vartheta = 1/2 $, and then applying Theorem~\ref{thm:abstract-averaging} to \(L\).
		
		\item The second is the usual H\"older control of the reflected path and its local
		time. For $\theta\in(0,1/2)$ set
		\begin{equation}
			\label{eq:holder-norms0}
			[W^+]_{\theta,T}
			:=
			\sup_{0\le r<s\le T}
			\frac{|W^+_s-W^+_r|}{|s-r|^\theta},
			\qquad
			[L]_{\theta,T}
			:=
			\sup_{0\le r<s\le T}
			\frac{|L_s-L_r|}{|s-r|^\theta}.
		\end{equation}
		Proposition~\ref{prop:Holder} shows that, for every \(q\ge1\),
		\begin{equation}
			\label{eq:Kolmogorov_regularity}
			\sup_{x\in\overline D}
			\Esp_x^W \left[
			[W^+]_{\theta,T}^q+[L]_{\theta,T}^q
			\right]<\infty.
		\end{equation}
	\end{itemize}
	
	\subsection{The purely sticky case}
	
	\begin{proposition}
		\label{thm:sticky}
		For every \(p\in[1,2]\) and every \(T>0\), the following estimates hold.
		\begin{enuroman}
			\item \label{item:sticky_marginal}
			For every \(0<\beta<1/8\), there exists
			\(C=C(p,T,\beta)>0\), independent of \(\varepsilon\), such that
			\begin{equation}
				\label{eq:fixed-time-process-rate}
				\sup_{x\in\overline D} \sup_{0\le t\le T}
				\Esp_{x,\pi}\left[
				|\widetilde X^\varepsilon_t-\widetilde X^0_t|^p
				\right]^{1/p}
				\le
				C\varepsilon^{\beta}.
			\end{equation}
			
			\item \label{item:sticky_uniform}
			For every \(0<\beta<1/8\), there exists
			\(C=C(p,T,\beta)>0\), independent of \(\varepsilon\), such that
			\begin{equation}
				\label{eq:uniform-process-rate}
				\sup_{x\in\overline D} \Esp_{x,\pi}\left[
				\sup_{t\in[0,T]}
				|\widetilde X^\varepsilon_t-\widetilde X^0_t|^p
				\right]^{1/p}
				\le
				C\varepsilon^\beta.
			\end{equation}
		\end{enuroman}
		
		In particular, both estimates imply the corresponding convergence
		in probability by Markov's inequality.
	\end{proposition}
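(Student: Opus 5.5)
The plan is to compare the two inverse clocks through the common-reference coupling and then pass the clock error through the Hölder regularity of $W^+$. Write
\[
\Delta_\varepsilon(s):=A_\varepsilon(s)-A_0(s)=\int_0^s\bigl(\rho(\alpha_{u/\varepsilon})-\overline\rho\bigr)\,dL_u .
\]
Since both clocks satisfy $A(t)-A(s)\ge t-s$, their inverses are $1$-Lipschitz. From $A_\varepsilon(\gamma_\varepsilon(t))=t=A_0(\gamma_0(t))$ we get
\[
A_0(\gamma_\varepsilon(t))-A_0(\gamma_0(t))=-\Delta_\varepsilon(\gamma_\varepsilon(t)).
\]
The increment lower bound then gives
\[
|\gamma_\varepsilon(t)-\gamma_0(t)|\le |\Delta_\varepsilon(\gamma_\varepsilon(t))|\le \sup_{s\le T}|\Delta_\varepsilon(s)|,
\]
using $\gamma_\varepsilon(t)\le t\le T$. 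Symmetrically, $|\gamma_\varepsilon(t)-\gamma_0(t)|\le|\Delta_\varepsilon(\gamma_0(t))|$. The second form is useful because $\gamma_0(t)$ is a $[0,T]$-valued random time measurable with respect to the base path.

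\textbf{Step 1 (clock error).} Apply \eqref{eq:local-time-averaging-assumption-nonmax} with $h=\rho$ and $\tau=\gamma_0(t)$, which is legitimate since no stopping property is required. This gives
\[
\sup_x\sup_{t\le T}\Esp_{x,\pi}|\gamma_\varepsilon(t)-\gamma_0(t)|^2\le C\varepsilon^{1/2}.
\]
For the uniform statement, apply \eqref{eq:local-time-averaging-assumption-max} to get, for every $\zeta<1/2$,
\[
\Esp\sup_{t\le T}|\gamma_\varepsilon(t)-\gamma_0(t)|^2\le C\varepsilon^\zeta.
\]
Under $\Prb_{x,\pi}$ the chain is started from $\pi$; the abstract theorem is uniform in $i$, so this is fine.

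\textbf{Step 2 (spatial transfer).} Fix $\theta<1/2$. Pathwise,
\[
|\widetilde X^\varepsilon_t-\widetilde X^0_t|\le [W^+]_{\theta,T}\,|\gamma_\varepsilon(t)-\gamma_0(t)|^\theta .
\]
Take the $p$-th power, $p\le2$, and apply Hölder's inequality with exponents $r,r'$, where $r'$ is chosen so that $p\theta r'=2$, i.e.\ $r'=2/(p\theta)\ge2/p$:
\[
\Esp|\widetilde X^\varepsilon_t-\widetilde X^0_t|^p\le \bigl(\Esp[W^+]^{pr}_{\theta,T}\bigr)^{1/r}\bigl(\Esp|\gamma_\varepsilon-\gamma_0|^{2}\bigr)^{p\theta/2}.
\]
The first factor is finite by \eqref{eq:Kolmogorov_regularity}, uniformly in $x$; the $\alpha$-expectation is harmless because $[W^+]$ does not depend on the chain. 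Hence the $L^p$ norm is at most
\[
C\bigl(\varepsilon^{1/2}\bigr)^{\theta/2}=C\varepsilon^{\theta/4}.
\]
Letting $\theta\uparrow1/2$ gives every $\beta<1/8$, which proves \ref{item:sticky_marginal}. For \ref{item:sticky_uniform}, the same bound holds with $\sup_t$ inside, using the maximal clock estimate with $\zeta<1/2$. This gives the rate $\varepsilon^{\zeta\theta/4}$, and $\zeta\theta/4$ also approaches $1/8$.

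\textbf{Main obstacle.} The delicate point is justifying the inverse-clock comparison with the random time $\gamma_0(t)$ rather than $\gamma_\varepsilon(t)$. The latter depends on the chain, so \eqref{eq:local-time-averaging-assumption-nonmax} cannot be applied to it directly. The plan is therefore to derive the inequality $|\gamma_\varepsilon(t)-\gamma_0(t)|\le|\Delta_\varepsilon(\gamma_0(t))|$ carefully, using the monotonicity of $A_\varepsilon$ and the unit-slope lower bound on the interval between $\gamma_0(t)$ and $\gamma_\varepsilon(t)$: evaluating $A_\varepsilon-t$ at $\gamma_0(t)$ gives $\Delta_\varepsilon(\gamma_0(t))$, and the slope bound converts this into the time displacement. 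Everything else is Hölder bookkeeping, and the exponent loss $1/2\cdot1/2\cdot1/2$ (averaging square root, $L^2$-to-$L^1$, Hölder $\theta<1/2$) explains the threshold $1/8$.
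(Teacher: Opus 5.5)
Your proposal is correct and is essentially the paper's proof: you use the same bound $|\gamma_\varepsilon(t)-\gamma_0(t)|\le\bigl|\int_0^{\gamma_0(t)}(\rho(\alpha_{s/\varepsilon})-\overline\rho)\,dL_s\bigr|$ from the $1$-Lipschitz inverse clock, apply the random-time and maximal averaging estimates to the centered function $\rho-\overline\rho$ (not to $h=\rho$ as written), and transfer through $[W^+]_{\theta,T}$ by H\"older's inequality, where your exact choice $p\theta r'=2$ simply replaces the paper's choice of $q$ near $1$ followed by Lyapunov. The one slip is arithmetic: in part (ii) the rate is $(\varepsilon^{\zeta})^{\theta/2}=\varepsilon^{\zeta\theta/2}$, not $\varepsilon^{\zeta\theta/4}$; as written, $\zeta\theta/4$ stays below $1/16$, and it is $\zeta\theta/2$ that tends to $1/8$, as in the paper.
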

	
\begin{proof}
	Throughout the proof, \(C\) may change from line to line but is independent
	of \(\varepsilon\) and uniform in \(x\in\overline D\).
	
	\medskip\noindent\emph{Step 1: control of the clocks.}
	Set
	\begin{equation}
		\label{eq:Delta_epsilon}
		\Delta_\varepsilon(t)
		:=
		|\gamma_\varepsilon(t)-\gamma_0(t)|.
	\end{equation}
	Since \(A_\varepsilon(r)\ge r\) and \(A_0(r)\ge r\),
	\[
	\gamma_\varepsilon(t),\gamma_0(t)\le t\le T.
	\]
	Moreover,
	\[
	A_\varepsilon(b)-A_\varepsilon(a)
	=
	(b-a)+\int_a^b\rho(\alpha_{s/\varepsilon})\,dL_s
	\ge b-a,
	\]
	so \(\gamma_\varepsilon=A_\varepsilon^{-1}\) is \(1\)-Lipschitz.
	
	Taking \(y=\gamma_0(t)\) yields \(A_0(y)=t\) and
	\begin{equation}
		\label{eq:fixed-time-inverse-bound}
		\begin{aligned}
			\Delta_\varepsilon(t)
			&=
			|\gamma_\varepsilon(t)-\gamma_\varepsilon(A_\varepsilon(y))|\\
			&\le
			|A_0(y)-A_\varepsilon(y)|\\
			&=
			\left|
			\int_0^{\gamma_0(t)}
			\bigl(\rho(\alpha_{s/\varepsilon})-\overline\rho\bigr)\,dL_s
			\right|.
		\end{aligned}
	\end{equation}
	Since \(\gamma_0(t)\) depends only on the reflected path and is bounded by
	\(T\), the random-time averaging estimate
	\eqref{eq:local-time-averaging-assumption-nonmax}, which is uniform in the
	starting point, gives
	\begin{equation}
		\label{eq:fixed-time-gamma-L2-rate}
		\sup_{x\in\overline D}
		\sup_{0\le t\le T}
		\Esp_{x,\pi}\left[\Delta_\varepsilon(t)^2\right]
		\le
		C\varepsilon^{1/2}.
	\end{equation}
	Hence, by Lyapunov's inequality, for every \(a\in(0,2]\),
	\begin{equation}
		\label{eq:fixed-time-gamma-La-rate}
		\sup_{x\in\overline D}
		\sup_{0\le t\le T}
		\Esp_{x,\pi}\left[\Delta_\varepsilon(t)^a\right]
		\le
		C\varepsilon^{a/4}.
	\end{equation}
	
	\medskip
	\noindent\emph{Step 2: fixed-time spatial error.}
	Fix \(\theta\in(0,1/2)\). The H\"older seminorm gives
	\[
	\left|
	W^+_{\gamma_\varepsilon(t)}
	-
	W^+_{\gamma_0(t)}
	\right|
	\le
	[W^+]_{\theta,T}
	|\gamma_\varepsilon(t)-\gamma_0(t)|^\theta
	=
	[W^+]_{\theta,T}\Delta_\varepsilon(t)^\theta.
	\]
	Choose \(q>1\) so close to \(1\) that \(p\theta q\le2\), and let \(q'\) be its
	conjugate exponent. H\"older's inequality gives
	\begin{align}
		\Esp_{x,\pi}\left[
		\left|
		W^+_{\gamma_\varepsilon(t)}
		-
		W^+_{\gamma_0(t)}
		\right|^p
		\right]
		&\le
		\Esp_{x,\pi}\left[
		[W^+]_{\theta,T}^p
		\Delta_\varepsilon(t)^{p\theta}
		\right]
		\\
		&\le
		\Esp_x^W\left[[W^+]_{\theta,T}^{pq'}\right]^{1/q'}
		\Esp_{x,\pi}\left[
		\Delta_\varepsilon(t)^{p\theta q}
		\right]^{1/q}.
	\end{align}
	Taking the power \(1/p\), we obtain
	\begin{equation}
		\label{eq:fixed-time-space-error-rate}
		\Esp_{x,\pi}\left[
		\left|
		W^+_{\gamma_\varepsilon(t)}
		-
		W^+_{\gamma_0(t)}
		\right|^p
		\right]^{1/p}
		\le
		\Esp_x^W\left[[W^+]_{\theta,T}^{pq'}\right]^{1/(pq')}
		\Esp_{x,\pi}\left[
		\Delta_\varepsilon(t)^{p\theta q}
		\right]^{1/(pq)}.
	\end{equation}
	By Proposition~\ref{prop:Holder},
	\[
	\sup_{x\in\overline D}
	\Esp_x^W\left[[W^+]_{\theta,T}^{pq'}\right]
	<\infty,
	\]
	while \eqref{eq:fixed-time-gamma-La-rate}, with
	\(a=p\theta q\), gives
	\[
	\sup_{x\in\overline D}
	\sup_{0\le t\le T}
	\Esp_{x,\pi}\left[
	\Delta_\varepsilon(t)^{p\theta q}
	\right]^{1/(pq)}
	\le
	C\varepsilon^{\theta/4}.
	\]
	Therefore,
	\[
	\sup_{x\in\overline D}
	\sup_{0\le t\le T}
	\Esp_{x,\pi}\left[
	\left|
	W^+_{\gamma_\varepsilon(t)}
	-
	W^+_{\gamma_0(t)}
	\right|^p
	\right]^{1/p}
	\le
	C\varepsilon^{\theta/4}.
	\]
	Letting \(\theta\) approach \(1/2\), any \(0<\beta<1/8\) can be chosen with
	\[
	\beta<\frac{\theta}{4}.
	\]
	This proves \eqref{eq:fixed-time-process-rate}.
	
	\medskip
	\noindent\emph{Step 3: uniform-in-time error.}
	Fix \(0<\beta<1/8\). Choose \(0<\zeta<1/2\) and \(\theta\in(0,1/2)\) such that
	\begin{equation}
		\label{eq:choose-theta-zeta-sticky}
		\beta<\frac{\theta\zeta}{2}.
	\end{equation}
	By the definition of the H\"older seminorm and
	\eqref{eq:fixed-time-inverse-bound},
	\[
	\sup_{t\in[0,T]}
	\left|
	W^+_{\gamma_\varepsilon(t)}
	-
	W^+_{\gamma_0(t)}
	\right|
	\le
	[W^+]_{\theta,T}
	\sup_{t\in[0,T]}
	|\gamma_\varepsilon(t)-\gamma_0(t)|^\theta.
	\]
	Since \(\gamma_0(t)\in[0,T]\),
	\[
	\sup_{t\in[0,T]}\Delta_\varepsilon(t)
	\le
	\sup_{r\in[0,T]}
	\left|
	\int_0^r
	\bigl(\rho(\alpha_{s/\varepsilon})-\overline\rho\bigr)\,dL_s
	\right|.
	\]
	Applying~\eqref{eq:local-time-averaging-assumption-max}, uniformly in the
	starting point, gives
	\begin{equation}
		\label{eq:uniform-gamma-L2-rate}
		\sup_{x\in\overline D}
		\Esp_{x,\pi}\left[
		\sup_{t\in[0,T]}\Delta_\varepsilon(t)^2
		\right]
		\le
		C\varepsilon^\zeta.
	\end{equation}
	Consequently, for every \(a\in(0,2]\),
	\begin{equation}
		\label{eq:uniform-gamma-La-rate}
		\sup_{x\in\overline D}
		\Esp_{x,\pi}\left[
		\sup_{t\in[0,T]}\Delta_\varepsilon(t)^a
		\right]
		\le
		C\varepsilon^{a\zeta/2}.
	\end{equation}
	Let \(p\in[1,2]\), choose \(r>1\) close enough to \(1\) that
	\(p\theta r\le2\), and let \(r'\) be its conjugate exponent.
	H\"older's inequality and Proposition~\ref{prop:Holder} give
	\[
	\begin{aligned}
		&\Esp_{x,\pi}\left[
		\sup_{t\in[0,T]}
		\left|
		W^+_{\gamma_\varepsilon(t)}
		-
		W^+_{\gamma_0(t)}
		\right|^p
		\right]^{1/p}\\
		&\qquad\le
		\Esp_x^W\left[[W^+]_{\theta,T}^{pr'}\right]^{1/(pr')}
		\Esp_{x,\pi}\left[
		\sup_{t\in[0,T]}
		\Delta_\varepsilon(t)^{p\theta r}
		\right]^{1/(pr)}.
	\end{aligned}
	\]
	Hence,
	\[
	\sup_{x\in\overline D}
	\Esp_{x,\pi}\left[
	\sup_{t\in[0,T]}
	\left|
	W^+_{\gamma_\varepsilon(t)}
	-
	W^+_{\gamma_0(t)}
	\right|^p
	\right]^{1/p}
	\le
	C\varepsilon^{\theta\zeta/2}.
	\]
	By \eqref{eq:choose-theta-zeta-sticky}, the last term is
	\(O(\varepsilon^\beta)\). This proves \eqref{eq:uniform-process-rate}.
\end{proof}
	
	\subsection{Proof of Theorem~\ref{thm:SE_homogenization_1}}
	\label{sec:proof_Wentzell}
	
\begin{proof}
	[Proof of~\ref{item:marginal}]
	Again, \(C\) may change from line to line but is independent of
	\(\varepsilon\) and uniform in \(x\in\overline D\).
	
	\medskip\noindent\emph{Step 1: the killing functional.}
	Fix \(\beta\in(0,1/8)\) and choose \(\theta\in(0,1/2)\) so that
	\[
	\beta<\frac{\theta}{4}.
	\]
	For deterministic \(t\in[0,T]\), we split
	\[
	\begin{aligned}
		K_\varepsilon(t)-K_0(t)
		&=
		\int_0^{\gamma_\varepsilon(t)}
		\kappa(\alpha_{s/\varepsilon})\,dL_s
		-
		\overline\kappa L_{\gamma_0(t)}
		\\
		&=
		\int_0^{\gamma_0(t)}
		\bigl(\kappa(\alpha_{s/\varepsilon})-\overline\kappa\bigr)\,dL_s
		+
		\int_{\gamma_0(t)}^{\gamma_\varepsilon(t)}
		\kappa(\alpha_{s/\varepsilon})\,dL_s,
	\end{aligned}
	\]
	where the second integral is understood in the oriented sense.
	Hence
	\begin{equation}
		\label{eq:fixed-time-killing-bound}
		|K_\varepsilon(t)-K_0(t)|
		\le
		\left|
		\int_0^{\gamma_0(t)}
		\bigl(\kappa(\alpha_{s/\varepsilon})-\overline\kappa\bigr)\,dL_s
		\right|
		+
		\|\kappa\|_\infty
		|L_{\gamma_\varepsilon(t)}-L_{\gamma_0(t)}|.
	\end{equation}
	
	The first term in \eqref{eq:fixed-time-killing-bound} is an averaged
	local-time integral. Since \(\gamma_0(t)\) depends only on the reflected
	path and is bounded by \(T\), the random-time averaging estimate
	\eqref{eq:local-time-averaging-assumption-nonmax}, which is uniform in the
	starting point, together with Lyapunov's inequality gives
	\begin{equation}
		\label{eq:fixed-time-kappa-average-L1}
		\sup_{x\in\overline D}
		\sup_{0\le t\le T}
		\Esp_{x,\pi}\left[
		\left|
		\int_0^{\gamma_0(t)}
		\bigl(\kappa(\alpha_{s/\varepsilon})-\overline\kappa\bigr)\,dL_s
		\right|
		\right]
		\le
		C\varepsilon^{1/4}.
	\end{equation}
	
	For the second term in \eqref{eq:fixed-time-killing-bound}, the H\"older
	seminorm of \(L\) gives
	\[
	|L_{\gamma_\varepsilon(t)}-L_{\gamma_0(t)}|
	\le
	[L]_{\theta,T}
	|\gamma_\varepsilon(t)-\gamma_0(t)|^\theta
	=
	[L]_{\theta,T}\Delta_\varepsilon(t)^\theta,
	\]
	where
	\[
	\Delta_\varepsilon(t)
	:=
	|\gamma_\varepsilon(t)-\gamma_0(t)|.
	\]
	Choose \(q>1\) with \(\theta q\le2\), and let \(q'\) be conjugate to \(q\).
	H\"older's inequality gives
	\[
	\Esp_{x,\pi}\left[
	[L]_{\theta,T}\Delta_\varepsilon(t)^\theta
	\right]
	\le
	\Esp_x^W\left[[L]_{\theta,T}^{q'}\right]^{1/q'}
	\Esp_{x,\pi}\left[
	\Delta_\varepsilon(t)^{\theta q}
	\right]^{1/q}.
	\]
	By Proposition~\ref{prop:Holder},
	\[
	\sup_{x\in\overline D}
	\Esp_x^W\left[[L]_{\theta,T}^{q'}\right]
	<\infty,
	\]
	while \eqref{eq:fixed-time-gamma-La-rate} controls the second factor:
	\[
	\sup_{x\in\overline D}
	\sup_{0\le t\le T}
	\Esp_{x,\pi}\left[
	\Delta_\varepsilon(t)^{\theta q}
	\right]^{1/q}
	\le
	C\varepsilon^{\theta/4}.
	\]
	Therefore,
	\begin{equation}
		\label{eq:fixed-time-local-time-shift-rate}
		\sup_{x\in\overline D}
		\sup_{0\le t\le T}
		\Esp_{x,\pi}\left[
		[L]_{\theta,T}\Delta_\varepsilon(t)^\theta
		\right]
		\le
		C\varepsilon^{\theta/4}.
	\end{equation}
	Combining \eqref{eq:fixed-time-killing-bound},
	\eqref{eq:fixed-time-kappa-average-L1}, and
	\eqref{eq:fixed-time-local-time-shift-rate} gives
	\[
	\sup_{x\in\overline D}
	\sup_{0\le t\le T}
	\Esp_{x,\pi}\left[
	|K_\varepsilon(t)-K_0(t)|
	\right]
	\le
	C\varepsilon^{1/4}+C\varepsilon^{\theta/4}.
	\]
	Since \(\beta<\theta/4<1/4\), this yields
	\begin{equation}
		\label{eq:fixed-time-killing-rate}
		\sup_{x\in\overline D}
		\sup_{0\le t\le T}
		\Esp_{x,\pi}\left[
		|K_\varepsilon(t)-K_0(t)|
		\right]
		\le
		C\varepsilon^\beta.
	\end{equation}
	
	\medskip
	\noindent\emph{Step 2: the Feynman--Kac weights.}
	Define
	\begin{equation}
		\label{eq:Y-epsilon-Y-zero}
		Y_\varepsilon(t)
		:=
		f\left(W^+_{\gamma_\varepsilon(t)}\right)e^{-K_\varepsilon(t)},
		\qquad
		Y_0(t)
		:=
		f\left(W^+_{\gamma_0(t)}\right)e^{-K_0(t)}.
	\end{equation}
	Inserting the mixed term
	\(f(W^+_{\gamma_0(t)})e^{-K_\varepsilon(t)}\) and splitting gives
	\[
	\begin{aligned}
		Y_\varepsilon(t)-Y_0(t)
		&=
		\left[
		f\left(W^+_{\gamma_\varepsilon(t)}\right)
		-
		f\left(W^+_{\gamma_0(t)}\right)
		\right]e^{-K_\varepsilon(t)}
		\\
		&\quad
		+
		f\left(W^+_{\gamma_0(t)}\right)
		\left[
		e^{-K_\varepsilon(t)}-e^{-K_0(t)}
		\right].
	\end{aligned}
	\]
	Using \(e^{-K_\varepsilon(t)}\le1\), the Lipschitz continuity of \(f\),
	and the fact that \(r\mapsto e^{-r}\) is \(1\)-Lipschitz on
	\([0,\infty)\), we obtain
	\begin{equation}
		\label{eq:Y-difference-bound}
		|Y_\varepsilon(t)-Y_0(t)|
		\le
		\mathrm{Lip}(f)
		\left|
		W^+_{\gamma_\varepsilon(t)}
		-
		W^+_{\gamma_0(t)}
		\right|
		+
		\|f\|_\infty
		|K_\varepsilon(t)-K_0(t)|.
	\end{equation}
	Taking expectations and using
	Proposition~\ref{thm:sticky}\ref{item:sticky_marginal} with \(p=1\),
	together with \eqref{eq:fixed-time-killing-rate}, gives
	\begin{equation}
		\label{eq:fixed-time-Y-rate}
		\sup_{x\in\overline D}
		\sup_{0\le t\le T}
		\Esp_{x,\pi}\left[
		|Y_\varepsilon(t)-Y_0(t)|
		\right]
		\le
		C\varepsilon^\beta.
	\end{equation}
	
	\medskip
	\noindent\emph{Step 3: quenched evolution.}
	For fixed \(\alpha\), \eqref{eq:quenched-semigroup} and
	\eqref{eq:limit-semigroup} give
	\[
	u^\varepsilon_\alpha(t,x)-u^0(t,x)
	=
	\Esp_x^W\left[
	Y_\varepsilon(t)-Y_0(t)
	\right].
	\]
	Hence, for every \(t\in[0,T]\),
	\[
	|u^\varepsilon_\alpha(t,x)-u^0(t,x)|
	\le
	\Esp_x^W\left[
	|Y_\varepsilon(t)-Y_0(t)|
	\right].
	\]
	Average now over \(\alpha\), take the supremum over deterministic \(t\),
	and then over \(x\in\overline D\):
	\[
	\begin{aligned}
		&\sup_{x\in\overline D}
		\sup_{0\le t\le T}
		\Esp^\alpha
		\left[
		|u^\varepsilon_\alpha(t,x)-u^0(t,x)|
		\right]
		\\
		&\qquad\le
		\sup_{x\in\overline D}
		\sup_{0\le t\le T}
		\Esp^\alpha
		\Esp_x^W
		\left[
		|Y_\varepsilon(t)-Y_0(t)|
		\right]
		\\
		&\qquad=
		\sup_{x\in\overline D}
		\sup_{0\le t\le T}
		\Esp_{x,\pi}
		\left[
		|Y_\varepsilon(t)-Y_0(t)|
		\right],
	\end{aligned}
	\]
	which combined with \eqref{eq:fixed-time-Y-rate} gives
	\eqref{eq:semigroup-rate-nonmax}.
\end{proof}
	
\begin{proof}[Proof of~\ref{item:sup}]
	Again, \(C\) may change from line to line but is independent of
	\(\varepsilon\) and uniform in \(x\in\overline D\). Fix \(0<\beta<1/8\) and choose
	\(0<\zeta<1/2\) and \(\theta<1/2\) so that
	\begin{equation}
		\label{eq:choose-theta-zeta-SE}
		\beta<\frac{\theta\zeta}{2}.
	\end{equation}
	
	For fixed \(\alpha\),
	\[
	\sup_{t\in[0,T]}
	|u^\varepsilon_\alpha(t,x)-u^0(t,x)|
	\le
	\Esp_x^W\left[
	\sup_{t\in[0,T]}|Y_\varepsilon(t)-Y_0(t)|
	\right].
	\]
	Averaging over \(\alpha\) and taking the supremum over
	\(x\in\overline D\) gives
	\begin{equation}
		\label{eq:uniform-quenched-joint-bound}
		\sup_{x\in\overline D}
		\Esp^\alpha\left[
		\sup_{t\in[0,T]}|u^\varepsilon_\alpha(t,x)-u^0(t,x)|
		\right]
		\le
		\sup_{x\in\overline D}
		\Esp_{x,\pi}\left[
		\sup_{t\in[0,T]}|Y_\varepsilon(t)-Y_0(t)|
		\right].
	\end{equation}
	By~\eqref{eq:Y-difference-bound},
	\begin{equation}
		\label{eq:quenched_Wentzell_split}
		\begin{aligned}
			\sup_{t\in[0,T]}|Y_\varepsilon(t)-Y_0(t)|
			&\le
			\mathrm{Lip}(f)
			\sup_{t\in[0,T]}
			|W^+_{\gamma_\varepsilon(t)}-W^+_{\gamma_0(t)}|\\
			&\quad+
			\|f\|_\infty
			\sup_{t\in[0,T]}|K_\varepsilon(t)-K_0(t)|.
		\end{aligned}
	\end{equation}
	Proposition~\ref{thm:sticky}\ref{item:sticky_uniform} yields
	\begin{equation}
		\label{eq:uniform-space-error-L1}
		\sup_{x\in\overline D}
		\Esp_{x,\pi}\left[
		\sup_{t\in[0,T]}
		|W^+_{\gamma_\varepsilon(t)}-W^+_{\gamma_0(t)}|
		\right]
		\le C\varepsilon^\beta.
	\end{equation}
	
	For the killing term, the same decomposition as before gives
	\[
	\begin{aligned}
		\sup_{t\in[0,T]}|K_\varepsilon(t)-K_0(t)|
		&\le
		\sup_{r\in[0,T]}
		\left|
		\int_0^r
		\bigl(\kappa(\alpha_{s/\varepsilon})-\overline\kappa\bigr)\,dL_s
		\right|\\
		&\quad+
		\|\kappa\|_\infty [L]_{\theta,T}
		\sup_{t\in[0,T]}\Delta_\varepsilon(t)^\theta.
	\end{aligned}
	\]
	By~\eqref{eq:local-time-averaging-assumption-max} and Lyapunov's inequality,
	\[
	\sup_{x\in\overline D}
	\Esp_{x,\pi}\left[
	\sup_{r\in[0,T]}
	\left|
	\int_0^r
	\bigl(\kappa(\alpha_{s/\varepsilon})-\overline\kappa\bigr)\,dL_s
	\right|
	\right]
	\le C\varepsilon^{\zeta/2}.
	\]
	Choose \(r>1\) such that \(\theta r\le2\), and let \(r'\) be its conjugate
	exponent. By H\"older's inequality,
	\eqref{eq:Kolmogorov_regularity}, and
	\eqref{eq:uniform-gamma-La-rate},
	\[
	\begin{aligned}
		&\Esp_{x,\pi}\left[
		[L]_{\theta,T}
		\sup_{t\in[0,T]}\Delta_\varepsilon(t)^\theta
		\right]\\
		&\qquad\le
		\Esp_x^W\left[[L]_{\theta,T}^{r'}\right]^{1/r'}
		\Esp_{x,\pi}\left[
		\sup_{t\in[0,T]}\Delta_\varepsilon(t)^{\theta r}
		\right]^{1/r}.
	\end{aligned}
	\]
	By Proposition~\ref{prop:Holder},
	\[
	\sup_{x\in\overline D}
	\Esp_x^W\left[[L]_{\theta,T}^{r'}\right]
	<\infty,
	\]
	while \eqref{eq:uniform-gamma-La-rate} gives
	\[
	\sup_{x\in\overline D}
	\Esp_{x,\pi}\left[
	\sup_{t\in[0,T]}\Delta_\varepsilon(t)^{\theta r}
	\right]^{1/r}
	\le
	C\varepsilon^{\theta\zeta/2}.
	\]
	Therefore,
	\[
	\sup_{x\in\overline D}
	\Esp_{x,\pi}\left[
	[L]_{\theta,T}
	\sup_{t\in[0,T]}\Delta_\varepsilon(t)^\theta
	\right]
	\le
	C\varepsilon^{\theta\zeta/2}.
	\]
	Consequently, by \eqref{eq:choose-theta-zeta-SE},
	\[
	\sup_{x\in\overline D}
	\Esp_{x,\pi}\left[
	\sup_{t\in[0,T]}|K_\varepsilon(t)-K_0(t)|
	\right]
	\le
	C\varepsilon^{\zeta/2}+C\varepsilon^{\theta\zeta/2}
	\le C\varepsilon^\beta.
	\]
	Combining the above with
	\eqref{eq:uniform-quenched-joint-bound},
	\eqref{eq:quenched_Wentzell_split}, and
	\eqref{eq:uniform-space-error-L1}
	gives \eqref{eq:semigroup-rate}.
\end{proof}

	\subsection{Sharpness of local-time averaging}
	\label{subsec:sharpness}
	
We first examine the sharpness of the averaging estimate underlying the
homogenization result. We show that the boundary-local-time averaging rate in
Theorem~\ref{thm:abstract-averaging} is already sharp in one dimension, for
reflected Brownian motion on \([0,1]\).

	\begin{proposition}
		\label{prop:boundary-LT-sharpness}
		Let \(D=(0,1)\), and let \(W^+\) be reflected Brownian motion on
		\(\overline D=[0,1]\), with generator
		\(\frac12\frac{d^2}{dx^2}\), started from \(0\).
		Write \(L^{(0)}\) and \(L^{(1)}\) for the regulators at \(0\) and \(1\),
		respectively, so that
		\[
		L=L^{(0)}+L^{(1)}
		\]
		is the total boundary local time.
		
		Let \(S=\{1,2\}\), and let \(\alpha\) be the independent Markov chain
		with generator
		\[
		Q
		=
		\begin{pmatrix}
			-a&a\\
			a&-a
		\end{pmatrix},
		\qquad a>0.
		\]
		Its invariant distribution is
		\[
		\pi=\left(\frac12,\frac12\right).
		\]
		Assume \(\alpha_0\sim\pi\), and set
		\[
		h(1)=-1,
		\qquad
		h(2)=1.
		\]
		Then \(\pi(h)=0\) and
		\[
		\Esp_\pi^\alpha
		\left[
		h(\alpha_t)h(\alpha_s)
		\right]
		=
		e^{-2a|t-s|}.
		\]
		
		For
		\[
		J_\varepsilon^{(0)}(T)
		:=
		\int_0^T
		h(\alpha_{s/\varepsilon})\,dL_s^{(0)}
		= \int_0^T
		\left(h(\alpha_{s/\varepsilon}) - \pi(h) \right)\,dL_s^{(0)},
		\]
		we have, for every \(T>0\),
		\begin{equation}
			\label{eq:boundary-LT-sharpness}
			\Esp_{0,\pi}
			\left|
			J_\varepsilon^{(0)}(T)
			\right|^2
			\sim
			\frac{\Esp_0[L_T^{(0)}]}{\sqrt a}\,
			\varepsilon^{1/2},
			\qquad
			\varepsilon\downarrow0.
		\end{equation}
		In particular, the exponent \(1/2\) in the squared \(L^2\)
		random-time averaging estimate cannot be improved in general.
		
		For the total boundary local time,
		\[
		J_\varepsilon(T)
		:=
		\int_0^T
		h(\alpha_{s/\varepsilon})\,dL_s,
		\]
		there exist constants \(0<c_T\le C_T<\infty\) such that
		\[
		c_T\varepsilon^{1/2}
		\le
		\Esp_{0,\pi}
		\left|
		J_\varepsilon(T)
		\right|^2
		\le
		C_T\varepsilon^{1/2}
		\]
		for all sufficiently small \(\varepsilon\).
		
		In particular, the exponent \(1/2\) in the boundary-local-time averaging estimate \eqref{eq:local-time-averaging-assumption-nonmax} cannot be improved in general.
	\end{proposition}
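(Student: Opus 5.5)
Condition on the reflected path and use the covariance $e^{-2a|t-s|}$ of the stationary sign chain, as in Example~\ref{ex:abstract-benchmarks}. Since $dL^{(0)}$ is atomless, this gives
\[
\Esp_{0,\pi}|J_\varepsilon^{(0)}(T)|^2
=2\Esp_0\int_{0<r<s<T}e^{-2a(s-r)/\varepsilon}\,dL^{(0)}_r\,dL^{(0)}_s .
\]
The key structural input is the two-point (Kac) moment formula for the regulator at $0$. Near the boundary point $0$, the measure $dL^{(0)}$ has intensity
\[
\Esp_x[dL^{(0)}_s]=\tfrac12 p^{+}_s(x,0)\,ds,
\]
where $p^+$ is the Neumann heat kernel on $[0,1]$. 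The factor $1/2$ comes from the Skorokhod normalization: for reflected motion on $[0,\infty)$ one has $L^{(0)}=\ell^0(B)$ in law with $p^+_s(0,0)=2p_s(0,0)$. Combining this with the same occupation-density/Markov argument as in Example~\ref{ex:abstract-benchmarks}, I would write
\[
\Esp_{0,\pi}|J_\varepsilon^{(0)}(T)|^2
=2\int_0^T e^{-2au/\varepsilon}\,g(u)\,G(T-u)\,du,
\]
where $g(u)=\tfrac12 p^+_u(0,0)$ and $G(t)=\Esp_0L^{(0)}_t=\int_0^t g$. Precisely, $\int_0^T g(r)\int_0^{T-r}e^{-2au/\varepsilon}g(u)\,du\,dr$ is rearranged by Fubini. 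Equivalently, I apply Lemma~\ref{lem:AF-conditioning}'s conditioning identity with equality, via the strong Markov property at the inverse local time, noting that $W^+=0$ on the support of $dL^{(0)}$.

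For the asymptotics, the only needed short-time fact is $p^+_u(0,0)=\frac{2}{\sqrt{2\pi u}}(1+O(e^{-c/u}))$ as $u\downarrow0$, obtained by the method of images on $[0,1]$. Hence
\[
g(u)\sim (2\pi u)^{-1/2}.
\]
Substituting $u=\varepsilon v$ and using dominated convergence with the bound $C v^{-1/2}e^{-2av}$ gives
\[
2\varepsilon^{1/2}\int_0^\infty e^{-2av}(2\pi v)^{-1/2}\,dv\,G(T)
=2\varepsilon^{1/2}\,\frac{1}{\sqrt{2\pi}}\sqrt{\frac{\pi}{2a}}\,\Esp_0[L^{(0)}_T]
=\frac{\Esp_0[L^{(0)}_T]}{\sqrt a}\,\varepsilon^{1/2},
\]
which is \eqref{eq:boundary-LT-sharpness}. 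Here $G(T-\varepsilon v)\to G(T)$ by continuity.

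For the total local time, the upper bound is \eqref{eq:local-time-averaging-assumption-nonmax}. For the lower bound, I expand
\[
\Esp|J_\varepsilon(T)|^2=\Esp\iint e^{-2a|s-r|/\varepsilon}\,dL\,dL
\]
into the four pairs $(i,j)\in\{0,1\}^2$. Every term is nonnegative, since both the kernel and the measures are nonnegative. It therefore suffices to keep the $(0,0)$ term, which gives
\[
\ge \tfrac12\,\frac{\Esp_0 L^{(0)}_T}{\sqrt a}\,\varepsilon^{1/2}
\]
for small $\varepsilon$. Finally, $\Esp_0L^{(0)}_T>0$, and this yields $c_T$.

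The main obstacle is justifying the exact two-point formula with the correct normalization constant for the reflected kernel. I would first derive it by the Tanaka/occupation-density limit
\[
L^{(0)}_t=\lim_{\delta\downarrow0}\frac{1}{2\delta}\int_0^t\mathbf 1_{\{W^+_s<\delta\}}\,ds,
\]
and then handle the mixing term through the first-passage decomposition at $1$, whose contribution is $O(e^{-c/u})$ for the short-time kernel. Only the leading constant matters for the lower bound, since positivity already suffices for the total local time.
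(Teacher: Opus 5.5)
Your proposal is correct and follows essentially the paper's route. You condition on the reflected path, use the stationary covariance $e^{-2a|s-r|/\varepsilon}$, evaluate the double integral by the Markov property and the Revuz correspondence of $L^{(0)}$ with $\frac12\delta_0$, apply the image expansion $p^{W^+}(u,0,0)\sim\sqrt{2/(\pi u)}$ with dominated convergence, and keep only the nonnegative $(0,0)$ term for the total local time. The remaining differences are cosmetic: you first rearrange by Fubini into $2\int_0^T e^{-2au/\varepsilon}g(u)G(T-u)\,du$ and apply dominated convergence once (as in Example~\ref{ex:abstract-benchmarks}), whereas the paper keeps the iterated form and applies it to $I_\varepsilon(R)/\sqrt{\varepsilon}$ and then in $r$; and the first-passage decomposition at $1$ you mention is unnecessary, since the Revuz formula with the Neumann kernel on $[0,1]$ is already exact.
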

	
	\begin{proof}
		This is a specialization of
		Theorem~\ref{thm:abstract-averaging} with
		\[
		A=L^{(0)}.
		\]
		Since \(\pi(h)=0\),
		\[
		J_\varepsilon^{(0)}(T)
		=
		\int_0^T
		\bigl(
		h(\alpha_{s/\varepsilon})-\pi(h)
		\bigr)\,dL_s^{(0)}.
		\]
		The theorem gives the upper bound of order
		\(\varepsilon^{1/2}\). We show that this order is attained.
		
		Let \(p^{W^+}(t,x,y)\) be the reflected heat kernel on \([0,1]\).
		With our normalization, \(L^{(0)}\) is the PCAF in Revuz
		correspondence with the "surface" measure \(\frac12\delta_0\). Conditioning on the reflected
		path and using stationarity of \(\alpha\),
		\[
		\Esp_{0,\pi}
		\left|
		J_\varepsilon^{(0)}(T)
		\right|^2
		=
		\Esp_0
		\int_0^T\int_0^T
		e^{-2a|s-r|/\varepsilon}\,
		dL_r^{(0)}\,dL_s^{(0)}.
		\]
		The diagonal carries no
		\(dL^{(0)}\otimes dL^{(0)}\)-mass. By symmetry, the Markov property,
		and the Revuz formula,
		\begin{equation}
			\label{eq:boundary-LT-double-kernel}
			\Esp_{0,\pi}
			\left|
			J_\varepsilon^{(0)}(T)
			\right|^2
			=
			\frac12
			\int_0^T
			p^{W^+}(r,0,0)
			\int_0^{T-r}
			e^{-2au/\varepsilon}
			p^{W^+}(u,0,0)\,du\,dr .
		\end{equation}
		
		The image representation of the reflected heat kernel gives
		\[
		p^{W^+}(u,0,0)
		=
		\frac{2}{\sqrt{2\pi u}}
		\sum_{k\in\mathbb Z}
		\exp\left(-\frac{2k^2}{u}\right).
		\]
		Since the \(k=0\) term dominates as \(u\downarrow0\),
		\begin{equation}
			\label{eq:Neumann-small-time-boundary}
			p^{W^+}(u,0,0)
			\sim
			\sqrt{\frac{2}{\pi u}},
			\qquad u\downarrow0.
		\end{equation}
		and, for every fixed \(T>0\),
		\[
		p^{W^+}(u,0,0)
		\le
		C_Tu^{-1/2},
		\qquad
		0<u\le T.
		\]
		
		For \(R>0\), put
		\[
		I_\varepsilon(R)
		:=
		\int_0^R
		e^{-2au/\varepsilon}
		p^{W^+}(u,0,0)\,du,
		\]
		so that
		\begin{equation}
			\label{eq:boundary-LT-double-kernel_2}
			\Esp_{0,\pi}
			\left|
			J_\varepsilon^{(0)}(T)
			\right|^2
			= 
			\frac12
			\int_0^T
			p^{W^+}(r,0,0)
			I_\varepsilon(T-r) \,dr .
		\end{equation}
		With \(u=\varepsilon v\),
		\[
		\frac{I_\varepsilon(R)}{\sqrt\varepsilon}
		=
		\sqrt\varepsilon
		\int_0^{R/\varepsilon}
		e^{-2av}
		p^{W^+}(\varepsilon v,0,0)\,dv.
		\]
		By \eqref{eq:Neumann-small-time-boundary} and dominated convergence,
		\begin{equation}
			\label{eq:sharpness_proof_dom1}
			\frac{I_\varepsilon(R)}{\sqrt\varepsilon}
			\longrightarrow
			\sqrt{\frac2\pi}
			\int_0^\infty
			e^{-2av}v^{-1/2}\,dv
			=
			\frac1{\sqrt a}.
		\end{equation}
		Moreover, the heat-kernel bound gives
		\begin{equation}
			\label{eq:sharpness_proof_dom2}
			\sup_{0<R\le T}
			\frac{I_\varepsilon(R)}{\sqrt{\varepsilon}}
			\le C_{T,a},
			\qquad 0<\varepsilon\le1.
		\end{equation}
		
		By~\eqref{eq:sharpness_proof_dom1} and \eqref{eq:sharpness_proof_dom2}, we can apply the dominated convergence to~\eqref{eq:boundary-LT-double-kernel_2}, which yields
		\[
		\varepsilon^{-1/2}
		\Esp_{0,\pi}
		\left|
		J_\varepsilon^{(0)}(T)
		\right|^2
		\longrightarrow
		\frac1{2\sqrt a}
		\int_0^T
		p^{W^+}(r,0,0)\,dr.
		\]
		By the Revuz formula,
		\[
		\Esp_0[L_T^{(0)}]
		=
		\frac12
		\int_0^T
		p^{W^+}(r,0,0)\,dr,
		\]
		which proves \eqref{eq:boundary-LT-sharpness}.
		
		For \(L=L^{(0)}+L^{(1)}\), conditioning on \(W^+\) gives
		\[
		\Esp_{0,\pi}|J_\varepsilon(T)|^2
		=
		\Esp_0
		\int_0^T\int_0^T
		e^{-2a|s-r|/\varepsilon}\,dL_r\,dL_s.
		\]
		Since the covariance kernel is nonnegative and
		\[
		dL=dL^{(0)}+dL^{(1)},
		\]
		all four contributions in the expansion are nonnegative. Hence
		\[
		\Esp_{0,\pi}|J_\varepsilon(T)|^2
		\ge
		\Esp_{0,\pi}|J_\varepsilon^{(0)}(T)|^2
		\ge c_T\varepsilon^{1/2}
		\]
		for all sufficiently small \(\varepsilon\), by
		\eqref{eq:boundary-LT-sharpness}.
		The upper bound follows from
		Theorem~\ref{thm:abstract-averaging} with \(A=L\) and
		\(\vartheta=1/2\).
	\end{proof}
	
	\begin{remark}
		\label{rem:boundary-LT-max-sharpness}
		Since
		\[
		\sup_{0\le t\le T}
		\left|
		J_\varepsilon(t)
		\right|^2
		\ge
		\left|
		J_\varepsilon(T)
		\right|^2,
		\]
		Proposition~\ref{prop:boundary-LT-sharpness} also shows that the
		polynomial exponent \(1/2\) in the maximal squared \(L^2\) estimate is
		sharp. It does not determine whether the logarithmic factor in
		Theorem~\ref{thm:abstract-averaging} is necessary.
	\end{remark}

	\subsection{Improved rates under compatibility}
\label{subsec:improved}
	
We show that the observable rate in
Theorem~\ref{thm:SE_homogenization_1} can improve for smoother compatible
test functions.
	
	\begin{theorem}
		\label{thm:improved-smooth-observable}
		Assume that, for some \(c\ge0\),
		\[
		\kappa_i=c\rho_i,
		\qquad i\in S,
		\]
		and let \(f\in C^2(\overline D)\) satisfy
		\[
		\partial_{\mathbf n}f=0
		\qquad\text{on }\partial D.
		\]
		Then, for every \(T>0\),
		\begin{equation}
			\label{eq:smooth-observable-fixed-rate}
			\sup_{x\in\overline D} \sup_{0\le t\le T}
			\Esp^\alpha
			\left|
			u_\alpha^\varepsilon(t,x)-u^0(t,x)
			\right|
			\le
			C\varepsilon^{1/4},
		\end{equation}
		and
		\begin{equation}
			\label{eq:smooth-observable-max-rate}
			\sup_{x\in\overline D} \Esp^\alpha
			\left[
			\sup_{0\le t\le T}
			\left|
			u_\alpha^\varepsilon(t,x)-u^0(t,x)
			\right|
			\right]
			\le
			C\varepsilon^{1/4}
			\bigl(1+|\log\varepsilon|\bigr).
		\end{equation}
		In particular, the second estimate is
		\(O(\varepsilon^\beta)\) for every \(\beta<1/4\).
	\end{theorem}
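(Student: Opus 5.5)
The plan is to exploit the proportionality \(\kappa_i=c\rho_i\), which removes the Hölder loss that produced the exponent \(1/8\) in Theorem~\ref{thm:SE_homogenization_1}. Under this assumption the killing functional is a function of the inverse clock alone. Since \(A_\varepsilon(\gamma_\varepsilon(t))=t\),
\[
K_\varepsilon(t)=c\int_0^{\gamma_\varepsilon(t)}\rho(\alpha_{s/\varepsilon})\,dL_s=c\bigl(A_\varepsilon(\gamma_\varepsilon(t))-\gamma_\varepsilon(t)\bigr)=c\bigl(t-\gamma_\varepsilon(t)\bigr).
\]
In the same way \(K_0(t)=c\,\overline\rho L_{\gamma_0(t)}=c(t-\gamma_0(t))\), because \(\overline\kappa=c\overline\rho\). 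Hence both Feynman--Kac weights are evaluations, at two different random times, of one process indexed by operational time. For fixed \(t\in[0,T]\), set
\[
g_t(s):=f(W^+_s)\,e^{-c(t-s)},\qquad 0\le s\le t .
\]
Then \(u^\varepsilon_\alpha(t,x)=\Esp_x^W[g_t(\gamma_\varepsilon(t))]\) and \(u^0(t,x)=\Esp_x^W[g_t(\gamma_0(t))]\).

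The second step is the semimartingale decomposition of \(g_t\). Since \(f\in C^2(\overline D)\), Itô's formula for \(dW^+=dB-\mathbf n(W^+)\,dL\) gives
\[
dg_t(s)=e^{-c(t-s)}\Bigl[\nabla f(W^+_s)\cdot dB_s+\bigl(\tfrac12\Delta f+cf\bigr)(W^+_s)\,ds-\partial_{\mathbf n}f(W^+_s)\,dL_s\Bigr].
\]
The Neumann condition \(\partial_{\mathbf n}f=0\) kills the local-time term. This is exactly where compatibility is used: without it a \(dL\)-integral over \([\gamma_0,\gamma_\varepsilon]\) would remain, and it could only be controlled through \([L]_{\theta,T}\Delta_\varepsilon^\theta\), which loses the Hölder exponent again.

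The third step uses that, with the path of \(\alpha\) frozen, \(A_\varepsilon\) is a continuous adapted increasing process in the reflected-Brownian filtration. Hence \(\gamma_\varepsilon(t)\) and \(\gamma_0(t)\) are stopping times bounded by \(T\), and so are \(\sigma:=\gamma_\varepsilon(t)\wedge\gamma_0(t)\) and \(\sigma':=\gamma_\varepsilon(t)\vee\gamma_0(t)\). Since \(\nabla f\) is bounded, optional stopping makes the \(dB\)-integral over \([\sigma,\sigma']\) have zero \(\Esp_x^W\)-mean. This gives the quenched bound
\[
|u^\varepsilon_\alpha(t,x)-u^0(t,x)|\le \bigl(\tfrac12\|\Delta f\|_\infty+c\|f\|_\infty\bigr)\,\Esp_x^W[\Delta_\varepsilon(t)].
\]

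It remains to average over \(\alpha\). By \eqref{eq:fixed-time-gamma-La-rate} with \(a=1\), we get \(\Esp_{x,\pi}\Delta_\varepsilon(t)\le C\varepsilon^{1/4}\) uniformly in \(x\) and \(t\), which proves \eqref{eq:smooth-observable-fixed-rate}. For \eqref{eq:smooth-observable-max-rate}, take the supremum over \(t\) inside \(\Esp_x^W\) in the quenched bound. Then use \eqref{eq:fixed-time-inverse-bound}, which gives \(\sup_t\Delta_\varepsilon(t)\le\sup_{r\le T}|J_\varepsilon(r)|\) with \(A=L\) and \(h=\rho\). Cauchy--Schwarz and Theorem~\ref{thm:abstract-averaging}\ref{item:abstract-max} with \(\vartheta=1/2\) (valid by Appendix~\ref{app:averaging}) then yield \(C\varepsilon^{1/4}(1+|\log\varepsilon|)\).

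I expect the main obstacle to be the rigorous justification of the third step. One must check that \(\gamma_\varepsilon(t)\) is a stopping time of the augmented filtration of \(W^+\) for every fixed environment path. One must also check that Itô's formula for reflected Brownian motion in a \(C^2\) domain applies to \(f\in C^2(\overline D)\) up to the boundary, with the stated Skorokhod normalization. Both should be standard, but they are what replaces the Hölder-seminorm argument, so I would write them out carefully.
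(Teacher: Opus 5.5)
Your proposal is correct and follows the paper's proof essentially step for step. The paper also writes $K_\varepsilon(t)=c(t-\gamma_\varepsilon(t))$ and $K_0(t)=c(t-\gamma_0(t))$, applies It\^o's formula to $e^{cs}f(W^+_s)$ (your $g_t$ up to the factor $e^{-ct}$) so that the Neumann condition removes the $dL$-term, and compares the Dynkin identities at the two bounded stopping times $\gamma_\varepsilon(t),\gamma_0(t)$ to get $|u^\varepsilon_\alpha-u^0|\le C_f\Esp_x^W|\gamma_\varepsilon(t)-\gamma_0(t)|$; it then concludes with \eqref{eq:fixed-time-gamma-L2-rate}, the maximal estimate of Theorem~\ref{thm:abstract-averaging}, and Lyapunov's inequality, as you do.

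One cosmetic point: because the sign of $\gamma_\varepsilon(t)-\gamma_0(t)$ is random, it is cleaner to observe that the stochastic integral stopped at $\gamma_\varepsilon(t)$ and at $\gamma_0(t)$ each has mean zero by optional stopping, rather than to argue through the interval $[\sigma,\sigma']$. This is exactly what applying Dynkin's formula at each time separately does.
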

	
\begin{proof}
	Since \(\kappa_i=c\rho_i\) for every \(i\), and since \(A_\varepsilon(\gamma_\varepsilon(t))=t\), 
	\[
	K_\varepsilon(t)
	=
	c\int_0^{\gamma_\varepsilon(t)}
	\rho(\alpha_{s/\varepsilon})\,dL_s
	=
	c\bigl(t-\gamma_\varepsilon(t)\bigr).
	\]
	Since also \(\overline\kappa=c\overline\rho\),
	\[
	K_0(t)
	=
	c\bigl(t-\gamma_0(t)\bigr).
	\]
	Therefore
	\[
	u_\alpha^\varepsilon(t,x)
	=
	e^{-ct}
	\Esp_x^W
	\left[
	e^{c\gamma_\varepsilon(t)}
	f(W^+_{\gamma_\varepsilon(t)})
	\right],
	\]
	and similarly
	\[
	u^0(t,x)
	=
	e^{-ct}
	\Esp_x^W
	\left[
	e^{c\gamma_0(t)}
	f(W^+_{\gamma_0(t)})
	\right].
	\]
	
	Fix \(\alpha\) and \(t\in[0,T]\). The two inverse clocks are bounded
	stopping times for the reflected Brownian filtration. Define
	\[
	F(s,x):=e^{cs}f(x).
	\]
	Applying It\^o's formula to the process
	\[
	F(s,W_s^+)=e^{cs}f(W_s^+),
	\]
	and using \(f\in C^2(\overline D)\) together with
	\(\partial_{\mathbf n}f=0\), the boundary local-time term vanishes.
	Thus, for every such stopping time \(\tau\),
	\[
	\Esp_x^W
	\left[
	e^{c\tau}f(W_\tau^+)
	\right]
	=
	f(x)
	+
	\Esp_x^W
	\int_0^\tau
	e^{cs}
	\left(
	cf+\frac12\Delta f
	\right)(W_s^+)\,ds.
	\]
	
	Applying this identity with
	\(\tau=\gamma_\varepsilon(t)\) and \(\tau=\gamma_0(t)\),
	subtracting the resulting expressions, and using the representations above,
	we obtain
	\[
	|u_\alpha^\varepsilon(t,x)-u^0(t,x)|
	\le
	C_f\Esp_x^W
	|\gamma_\varepsilon(t)-\gamma_0(t)|.
	\]
	
	Averaging the preceding estimate over the environment and taking the
	supremum over \(x\in\overline D\) gives
	\[
	\begin{aligned}
		&\sup_{x\in\overline D}
		\sup_{0\le t\le T}
		\Esp^\alpha
		\left|
		u_\alpha^\varepsilon(t,x)-u^0(t,x)
		\right|
		\\
		&\qquad\le
		C_f
		\sup_{x\in\overline D}
		\sup_{0\le t\le T}
		\Esp_{x,\pi}
		\left|
		\gamma_\varepsilon(t)-\gamma_0(t)
		\right|
		\\
		&\qquad\le
		C_f
		\sup_{x\in\overline D}
		\sup_{0\le t\le T}
		\left(
		\Esp_{x,\pi}
		\left|
		\gamma_\varepsilon(t)-\gamma_0(t)
		\right|^2
		\right)^{1/2}
		\\
		&\qquad\le
		C\varepsilon^{1/4},
	\end{aligned}
	\]
	where the second inequality is Lyapunov's inequality and the last one follows
	from~\eqref{eq:fixed-time-gamma-L2-rate}.
	
	For the uniform estimate, the same argument gives
	\[
	\sup_{0\le t\le T}
	\left|
	u_\alpha^\varepsilon(t,x)-u^0(t,x)
	\right|
	\le
	C_f
	\Esp_x^W
	\left[
	\sup_{0\le t\le T}
	|\gamma_\varepsilon(t)-\gamma_0(t)|
	\right].
	\]
	The maximal estimate of
	Theorem~\ref{thm:abstract-averaging}, with
	\(\vartheta=1/2\), followed by Lyapunov's inequality, yields
	\[
	\sup_{x\in\overline D}
	\Esp^\alpha
	\left[
	\sup_{0\le t\le T}
	\left|
	u_\alpha^\varepsilon(t,x)-u^0(t,x)
	\right|
	\right]
	\le
	C\varepsilon^{1/4}
	\bigl(1+|\log\varepsilon|\bigr).
	\]
	This completes the proof.
\end{proof}
	
	\begin{remark}
		Proposition~\ref{prop:boundary-LT-sharpness} shows that the
		\(\varepsilon^{1/2}\) second-moment boundary-local-time averaging rate is sharp, but
		this does not imply that the resulting observable rates are optimal.
		Under the proportionality condition of
		Theorem~\ref{thm:improved-smooth-observable}, one obtains the better
		rate \(\varepsilon^{1/4}\) for \(C^2\) observables satisfying the Neumann boundary condition, and
		\(\varepsilon^{1/4}(1+|\log\varepsilon|)\) for the expected uniform error.
	\end{remark}

	\section{Quenched and annealed evolution equations}
	\label{sec:quenched}
For the quenched problem, we fix a realization of the chain.
The resulting boundary condition is then time-dependent, so the spatial
coordinate alone is generally not Markov. In particular, the future
evolution depends not only on the current position but also on the current
phase of the frozen environment. Consequently, the quenched operators
need not form a semigroup on \(C(\overline D)\). We therefore work directly
with the Feynman--Kac representation and derive the corresponding backward
identities.

These equations are not needed for the preceding homogenization proof;
rather, they identify the analytic structure of the stochastic model and
prepare the forward formulation of the next section.
	
	Let
	\[
	\alpha:[0,\infty)\to S:=\{1,\dots,N\}
	\]
	be a fixed c\`adl\`ag, piecewise constant trajectory of the Markov chain.
	
	Define the quenched clock
	\begin{equation}
		\label{eq:quenched-clock}
		A^\alpha(s)
		:=
		s+\int_0^s\rho(\alpha_u)\,dL_u,
		\qquad s\ge0,
	\end{equation}
	and let \(\gamma^\alpha\) be its right-inverse:
	\begin{equation}
		\label{eq:quenched-clock-inverse}
		\gamma^\alpha(t)
		:=
		\inf\{s\ge0:A^\alpha(s)>t\},
		\qquad t\ge0.
	\end{equation}
	The corresponding killing functional is
	\begin{equation}
		\label{eq:quenched-killing-functional}
		K^\alpha(t)
		:=
		\int_0^{\gamma^\alpha(t)}
		\kappa(\alpha_s)\,dL_s.
	\end{equation}
	
	For \(f\in C(\overline D)\), define the quenched Feynman--Kac operators
	\begin{equation}
		\label{eq:quenched-spatial-operator}
		(S_t^\alpha f)(x)
		:=
		\Esp_x\!\left[
		f\!\left(W^+_{\gamma^\alpha(t)}\right)
		e^{-K^\alpha(t)}
		\right],
		\qquad t\ge0,\quad x\in\overline D.
	\end{equation}
	In general $(S_t^\alpha)_{t\ge0}$ is not a semigroup. At physical time $t$
	the relevant phase of the frozen environment is $\gamma^\alpha(t)$, and that
	information is not contained in the spatial position alone.
	
	\begin{lemma}
		[Continuity in initial position]
		\label{lem:reflected-start-continuity}
		On a bounded $C^2$ domain, couple normally reflected Brownian motions $W^x,W^y$ using the same Brownian driver, with regulators $L^x,L^y$ normalized as in Section~\ref{ssec:notation}: 
		\begin{equation}
			W_t^{x}
			=
			x+B_t-\int_0^t \mathbf n(W_s^x)\,dL_s^x,
			\qquad
			W_t^{y}
			=
			y+B_t-\int_0^t \mathbf n(W_s^y)\,dL_s^y,
			\qquad t\ge0.
		\end{equation}
		For every $T>0$, as $y\to x$,
		\[
		\sup_{s\le T}|W_s^y-W_s^x|+\sup_{s\le T}|L_s^y-L_s^x|
		\longrightarrow0\quad\text{in probability}.
		\]
	\end{lemma}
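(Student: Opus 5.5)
The plan is to prove stability of the Skorokhod problem in $\overline D$ with respect to the initial point, via an energy estimate on $|W^y-W^x|^2$. Since both processes use the same driver $B$, the difference $V_t:=W^y_t-W^x_t$ is a continuous process of finite variation:
\[
V_t=(y-x)-\int_0^t\mathbf n(W^y_s)\,dL^y_s+\int_0^t\mathbf n(W^x_s)\,dL^x_s .
\]
Hence, by the chain rule for finite-variation paths,
\[
|V_t|^2=|y-x|^2-2\int_0^t V_s\cdot\mathbf n(W^y_s)\,dL^y_s+2\int_0^t V_s\cdot\mathbf n(W^x_s)\,dL^x_s .
\]

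The key geometric input for $C^2$ domains is the uniform exterior sphere condition. There is a constant $\kappa_D\ge0$ such that, for all $\xi\in\partial D$ and $z\in\overline D$,
\[
(z-\xi)\cdot\mathbf n(\xi)\le\kappa_D|z-\xi|^2 .
\]
Since $dL^y$ charges only $\{W^y_s\in\partial D\}$, this gives $-V_s\cdot\mathbf n(W^y_s)\,dL^y_s\le\kappa_D|V_s|^2dL^y_s$. Similarly, $V_s\cdot\mathbf n(W^x_s)\,dL^x_s\le\kappa_D|V_s|^2dL^x_s$. Therefore
\[
|V_t|^2\le|y-x|^2+2\kappa_D\int_0^t|V_s|^2\,d(L^x_s+L^y_s).
\]
The Stieltjes form of Gronwall's lemma then yields the pathwise bound
\[
\sup_{s\le T}|W^y_s-W^x_s|\le|y-x|\,\exp\bigl(\kappa_D(L^x_T+L^y_T)\bigr).
\]
By Remark~\ref{rem:abstract-boundary-local-time}, $\sup_z\Esp_z L_T^q<\infty$; under the coupling, $L^y$ has the law of the regulator under $\Prb_y$. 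So $L^x_T+L^y_T$ is tight uniformly in $y$, and the spatial part tends to $0$ in probability. In fact, the Kac moment bound of Lemma~\ref{lem:AF-conditioning} gives exponential moments for small $T$, and one could iterate over short intervals, but tightness is already enough.

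The local time part is the step I expect to be the main obstacle, since the regulator is not directly controlled by the energy estimate. I would first use the occupation representation: for $\delta>0$, the function $\psi(z)=\operatorname{dist}(z,\partial D)$ is $C^2$ near $\partial D$ with $\nabla\psi=-\mathbf n$ on $\partial D$. Take a $C^2(\overline D)$ function $\phi$ equal to $\psi$ in a collar, so that $\partial_{\mathbf n}\phi=-1$ on $\partial D$. It\^o's formula then gives
\[
L^x_t=\phi(W^x_t)-\phi(x)-\int_0^t\nabla\phi(W^x_s)\cdot dB_s-\frac12\int_0^t\Delta\phi(W^x_s)\,ds ,
\]
and the same identity holds for $y$. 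Subtracting, $\sup_{t\le T}|L^y_t-L^x_t|$ is bounded by three terms:
\begin{itemize}
\item $2\,\mathrm{Lip}(\phi)\sup_{t\le T}|V_t|$;
\item $\tfrac12\|\nabla^2\phi\|_{\rm Lip}\,T\sup_t|V_t|$, which requires $\phi\in C^{2,1}$ or, failing that, uniform continuity of $\Delta\phi$ together with dominated convergence;
\item the stochastic integral $\sup_t\bigl|\int_0^t(\nabla\phi(W^y_s)-\nabla\phi(W^x_s))\cdot dB_s\bigr|$.
\end{itemize}
The last term is handled by the BDG inequality (or Lenglart's inequality), using that $\int_0^T|\nabla\phi(W^y_s)-\nabla\phi(W^x_s)|^2ds\to0$ in probability, since $\nabla\phi$ is uniformly continuous and $\sup|V|\to0$ in probability.

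If the domain is only $C^2$, then $\phi$ is merely $C^2$ and $\Delta\phi$ is only uniformly continuous. This still suffices: $\int_0^T|\Delta\phi(W^y_s)-\Delta\phi(W^x_s)|\,ds\le T\,\omega(\sup|V|)\to0$ in probability, where $\omega$ is the modulus of continuity of $\Delta\phi$. Combining the spatial and local time parts gives the claim.
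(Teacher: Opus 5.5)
Your proposal is correct and follows essentially the same route as the paper: the finite-variation energy estimate for $|W^y-W^x|^2$ using the uniform exterior sphere condition, followed by Gronwall. You then get tightness of $L^x_T+L^y_T$ from the uniform moment bound, and finally use the It\^o representation of $L$ through a $C^2$ distance function, with BDG controlling the martingale difference.
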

	
	\begin{proof}
		Set $Z_t := W^{x}_t - W^{y}_t $. 
		Under common-noise coupling,
		\[
		d Z_t
		=
		-\mathbf n(W_t^x)\,dL_t^x
		+\mathbf n(W_t^y)\,dL_t^y.
		\]
		Since \(D\) is bounded with \(C^2\) boundary, it satisfies a uniform
		exterior sphere condition; see, e.g.,
		\cite[Chapter~2, Problem~2.11]{gilbarg2001_elliptic_PDEs}.
		Hence, by \cite[Condition~(A) and Remark~1.1]{saisho1987stochastic},
		there exists \(c_D>0\) such that
		\[
		(z-w)\cdot\mathbf n(z)
		\ge -c_D|z-w|^2,
		\qquad z\in\partial D,\quad w\in\overline D.
		\]
		
		Under the common-noise coupling,
		\[
		dZ_t
		=
		-\mathbf n(W_t^x)\,dL_t^x
		+\mathbf n(W_t^y)\,dL_t^y.
		\]
		Since \(Z\) has finite variation,
		\[
		d|Z_t|^2
		=
		-2 Z_t\cdot \mathbf n(W_t^x)\,dL_t^x
		+2 Z_t\cdot \mathbf n(W_t^y)\,dL_t^y.
		\]
		Now \(dL_t^x\) is carried by \(\{W_t^x\in\partial D\}\), so on its support,
		\[
		Z_t\cdot \mathbf n(W_t^x)
		=
		(W_t^x-W_t^y)\cdot \mathbf n(W_t^x)
		\ge -c_D|Z_t|^2.
		\]
		Likewise, since \(dL_t^y\) is carried by \(\{W_t^y\in\partial D\}\),
		\[
		(W_t^y-W_t^x)\cdot \mathbf n(W_t^y)
		\ge -c_D|Z_t|^2,
		\]
		or equivalently
		\[
		Z_t\cdot \mathbf n(W_t^y)
		\le c_D|Z_t|^2.
		\]
		Therefore
		\[
		d|Z_t|^2
		\le
		2c_D|Z_t|^2\,(dL_t^x+dL_t^y).
		\]
		And hence, Gr\"onwall's inequality (see, e.g., \cite[Lemma~2.2]{saisho1987stochastic}) gives
		\[
		\sup_{s\le T}|W_s^x-W_s^y|^2
		\le
		|x-y|^2
		\exp\!\left\{2c_D(L_T^x+L_T^y)\right\}.
		\]
		
		By Lemma~\ref{lem:short-time-growth-local-time},
		\[
		C_T:=\sup_{z\in\overline D}\Esp_z L_T<\infty.
		\]
		Hence, for every \(R>0\), Markov's inequality gives
		\[
		\sup_{x,y\in\overline D}
		\Pr\!\left(L_T^x+L_T^y>R\right)
		\le \frac{2C_T}{R}.
		\]
		Thus \(\{L_T^x+L_T^y:x,y\in\overline D\}\) is tight; see
		\cite[Section~5]{billingsley1968Convergence_of}.
		Combining this with the preceding estimate yields
		\[
		\sup_{s\le T}|W_s^x-W_s^y|\longrightarrow0
		\qquad\text{in probability as }y\to x.
		\]

		For the local time convergence, we use the smooth distance function
		\(r\) described in Appendix~\ref{app:Holder_continuity}: it belongs to
		\(C^2(\overline D)\), agrees with the inward distance in a collar of
		\(\partial D\), and satisfies
		\[
		\nabla r\cdot\mathbf n=-1
		\qquad\text{on }\partial D.
		\]
		It\^o's formula gives
		\[
		L_t^x
		=
		r(W_t^x)-r(x)
		-\int_0^t \nabla r(W_s^x)\cdot dB_s
		-\frac12\int_0^t \Delta r(W_s^x)\,ds.
		\]
		Subtracting the corresponding identity for \(y\), the uniform continuity
		of \(r\) and \(\Delta r\), together with the preceding path convergence,
		shows that the non-martingale terms converge uniformly in probability.
		Moreover, by the BDG inequality,
		\[
		\Esp\!\left[
		\sup_{t\le T}
		\left|
		\int_0^t
		\bigl(\nabla r(W_s^x)-\nabla r(W_s^y)\bigr)\cdot dB_s
		\right|^2
		\right]
		\le
		C\,\Esp\int_0^T
		\left|\nabla r(W_s^x)-\nabla r(W_s^y)\right|^2 ds.
		\]
		The right-hand side tends to zero because \(\nabla r\) is bounded and
		uniformly continuous and
		\(\sup_{s\le T}|W_s^x-W_s^y|\to0\) in probability. Hence
		\[
		\sup_{t\le T}|L_t^x-L_t^y|
		\longrightarrow0
		\qquad\text{in probability}.
		\]
		This completes the proof.
	\end{proof}
	
	\subsection{Quenched backward equation}
	
	For \(s\ge0\), set
	\begin{equation}
		\label{eq:quenched-operational-killing}
		H^\alpha_s
		:=
		\exp\left\{-B^{\alpha}_s\right\}, \quad 
		B^\alpha_s := \int_0^s\kappa(\alpha_u)\,dL_u
	\end{equation}
	so that
	\[
	e^{-K^\alpha(t)}
	=
	H^\alpha_{\gamma^\alpha(t)},
	\qquad t\ge0.
	\]
	
	Let
	\[
	S_\alpha
	:=
	\{i\in S:\alpha_s=i\text{ for some }s\ge0\}
	\]
	be the set of regimes visited by the fixed path \(\alpha\), and define the
	common smooth test class
	\begin{equation}
		\label{eq:quenched-common-domain}
		\mathcal D_\alpha
		:=
		\left\{
		\begin{array}{l}
			f\in C^2(\overline D):\
			\partial_{\mathbf n}f
			+\kappa_i f
			+\dfrac{\rho_i}{2}\Delta f
			=0
			\text{ on }\partial D\\
			\text{for every }i\in S_\alpha
		\end{array}
		\right\}.
	\end{equation}
	
	\begin{theorem}
		\label{thm:quenched_sticky_elastic}
		The family \((S_t^\alpha)_{t\ge0}\) defined by
		\eqref{eq:quenched-spatial-operator} has the following properties.
		
		\begin{enumerate}
			\item (Preservation of continuity, contractivity, and continuity at the origin.) For every \(t\ge0\),
			\[
			S^{\alpha}_t f\in C(\ol D), \quad 
			\|S_t^\alpha f\|_\infty
			\le
			\|f\|_\infty,
			\quad f\in C(\overline D).
			\]
			Moreover,
			\[
			\|S_t^\alpha f-f\|_\infty\longrightarrow0,
			\qquad t\downarrow0.
			\]
			
			\item (Dynkin formula.) For every \(f\in C^2(\overline D)\),
			\begin{equation}
				\label{eq:quenched-backward-dynkin}
				\begin{aligned}
					(S_t^\alpha f)(x)-f(x)
					&=
					\Esp_x\!\left[
					\frac12
					\int_0^{\gamma^\alpha(t)}
					H^\alpha_s
					\Delta f(W_s^+)\,ds
					\right]
					\\
					&\quad
					-
					\Esp_x\!\left[
					\int_0^{\gamma^\alpha(t)}
					H^\alpha_s
					\left(
					\partial_{\mathbf n}f
					+\kappa(\alpha_s)f
					\right)(W_s^+)\,dL_s
					\right].
				\end{aligned}
			\end{equation}
			
			\item (Quenched backward equation.) If \(f\in\mathcal D_\alpha\), then
			\begin{equation}
				\label{eq:quenched-backward-mild}
				S_t^\alpha f
				=
				f+\frac12\int_0^t S_s^\alpha(\Delta f)\,ds,
				\qquad t\ge0.
			\end{equation}
			In particular, the map \(t\mapsto S_t^\alpha f\) is continuously
			differentiable and satisfies
			\begin{equation}
				\label{eq:quenched-backward-derivative}
				\frac{d}{dt}S_t^\alpha f
				=
				\frac12S_t^\alpha(\Delta f),
				\qquad t\ge0.
			\end{equation}
		\end{enumerate}
	\end{theorem}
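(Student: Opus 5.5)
For item (1) I will use the common-noise coupling of Lemma~\ref{lem:reflected-start-continuity}. Contractivity is immediate from $e^{-K^\alpha(t)}\le1$. For continuity in $x$, fix the frozen path $\alpha$ and $t$, and build $W^x,W^y$ and $L^x,L^y$ on one probability space. The clocks $A^{\alpha,x}(s)=s+\int_0^s\rho(\alpha_u)\,dL^x_u$ are Stieltjes integrals of the piecewise constant function $\rho(\alpha_\cdot)$. On $[0,T]$ this function has finitely many jumps, so integrating by parts on each constancy interval gives
\[
\sup_{s\le T}|A^{\alpha,x}(s)-A^{\alpha,y}(s)|\le C_{\alpha,T}\sup_{s\le T}|L^x_s-L^y_s|.
\]
The same bound holds for $B^\alpha$. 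Both inverse clocks are $1$-Lipschitz and the clocks satisfy $A(b)-A(a)\ge b-a$. Hence the argument of \eqref{eq:fixed-time-inverse-bound} gives $|\gamma^{\alpha,x}(t)-\gamma^{\alpha,y}(t)|\le\sup_{s\le t}|A^{\alpha,x}(s)-A^{\alpha,y}(s)|$. Uniform continuity of the paths $W^x$ and $L^x$ on $[0,T]$ then shows that $W^y_{\gamma^{\alpha,y}(t)}\to W^x_{\gamma^{\alpha,x}(t)}$ and $K^{\alpha,y}(t)\to K^{\alpha,x}(t)$ in probability. Bounded convergence then yields $S^\alpha_tf(y)\to S^\alpha_tf(x)$.

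For strong continuity at $t=0$, I use $\gamma^\alpha(t)\le t$. This gives
\[
|S_t^\alpha f(x)-f(x)|\le \Esp_x\bigl[\omega_f(\sup_{s\le t}|W^+_s-x|)\bigr]+\|f\|_\infty\|\kappa\|_\infty\Esp_xL_t,
\]
where $\omega_f$ is the modulus of continuity of $f$. Both terms tend to zero uniformly in $x$: the first by \eqref{eq:Kolmogorov_regularity}, the second by Lemma~\ref{lem:short-time-growth-local-time}.

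For the Dynkin formula (2), I apply Itô's formula to $H^\alpha_sf(W^+_s)$. Since $dW^+=dB-\mathbf n\,dL$ and $dH^\alpha_s=-\kappa(\alpha_s)H^\alpha_s\,dL_s$, with $H^\alpha$ continuous and of finite variation, we get
\[
d(H^\alpha f(W^+))=H^\alpha\bigl(\nabla f\cdot dB+\tfrac12\Delta f\,ds-(\partial_{\mathbf n}f+\kappa(\alpha)f)\,dL\bigr).
\]
Next I stop at $\gamma^\alpha(t)$. This is a bounded stopping time of the reflected Brownian filtration, because $\alpha$ is deterministic here and $A^\alpha$ is adapted and continuous. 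The stochastic integral therefore has zero mean, since its integrand is bounded. Taking expectations and using $H^\alpha_{\gamma^\alpha(t)}=e^{-K^\alpha(t)}$ gives \eqref{eq:quenched-backward-dynkin}.

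For (3), the key step is to trade the $dL$ term for a $ds$-type term using the boundary condition. Let $f\in\mathcal D_\alpha$. On the support of $dL$ we have $W^+_s\in\partial D$ and $\alpha_s\in S_\alpha$. The boundary condition then gives $-(\partial_{\mathbf n}f+\kappa(\alpha_s)f)(W^+_s)=\tfrac{\rho(\alpha_s)}2\Delta f(W^+_s)$. Thus the right-hand side of \eqref{eq:quenched-backward-dynkin} becomes
\[
\tfrac12\Esp_x\int_0^{\gamma^\alpha(t)}H^\alpha_s\,\Delta f(W^+_s)\,dA^\alpha(s).
\]
Now I change variables $s=\gamma^\alpha(r)$. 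The map $A^\alpha$ is continuous and strictly increasing with inverse $\gamma^\alpha$, so the integral equals
\[
\tfrac12\int_0^t H^\alpha_{\gamma^\alpha(r)}\Delta f(W^+_{\gamma^\alpha(r)})\,dr
=\tfrac12\int_0^t e^{-K^\alpha(r)}\Delta f(W^+_{\gamma^\alpha(r)})\,dr.
\]
Fubini, which is legitimate because the integrand is bounded, then gives \eqref{eq:quenched-backward-mild}. Differentiability follows from the continuity of $r\mapsto S^\alpha_r(\Delta f)$ in the sup norm, with $\Delta f\in C(\overline D)$. I will get this continuity from dominated convergence, using the continuity of $r\mapsto\gamma^\alpha(r)$, of $W^+$ and of $K^\alpha$; to make it uniform in $x$, I will use the same modulus bounds as in (1), since $|\gamma^\alpha(r)-\gamma^\alpha(r')|\le|r-r'|$.

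The main obstacle is the continuity in $x$ in (1). The time-dependent regime enters through $\int\rho(\alpha_u)\,dL_u$, which must be shown to depend continuously on the local-time path. This is exactly where the frozen piecewise constant environment, with finitely many switches on compacts, is used.
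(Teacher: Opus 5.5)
Your proposal is correct and follows essentially the same route as the paper. For continuity in $x$ you use the common-noise coupling together with the finitely many switches of the frozen path on compacts. For continuity at the origin you use $\gamma^\alpha(t)\le t$. For the Dynkin formula you apply the It\^o product rule to $H^\alpha_s f(W^+_s)$ stopped at the bounded stopping time $\gamma^\alpha(t)$. For the backward equation you substitute the boundary condition and then change variables $s=\gamma^\alpha(r)$.

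Your deviations are minor refinements rather than a different strategy. First, you make the clock and inverse-clock convergence quantitative, via $\sup_{s\le T}|A^{\alpha,x}(s)-A^{\alpha,y}(s)|\le C_{\alpha,T}\sup_{s\le T}|L^x_s-L^y_s|$ and the $1$-Lipschitz inverse. Second, you obtain sup-norm continuity of $r\mapsto S^\alpha_r(\Delta f)$ directly from the uniform H\"older moduli. The paper instead argues through joint continuity on $[0,T]\times\overline D$ and compactness.
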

	
	\begin{proof}
		\medskip\noindent\emph{1. Contractivity and continuity at the origin.}
		Since \(K^\alpha(t)\ge0\), for every \(f\in C(\overline D)\),
		\[
		|(S_t^\alpha f)(x)|
		\le
		\Esp_x\!\left[
		\left|f\!\left(W^+_{\gamma^\alpha(t)}\right)\right|
		e^{-K^\alpha(t)}
		\right]
		\le
		\|f\|_\infty.
		\]
		For continuity at $t=0$, since
		\[
		0\le\gamma^\alpha(t)\le t,
		\]
		we have
		\begin{align}
			|(S_t^\alpha f)(x)-f(x)|
			&\le
			\Esp_x\!\left[
			\left|
			f\!\left(W^+_{\gamma^\alpha(t)}\right)-f(x)
			\right|
			\right]
			\nonumber\\
			&\quad
			+
			\|f\|_\infty
			\Esp_x\!\left[1-e^{-K^\alpha(t)}\right].
			\label{eq:quenched-strong-continuity-bound}
		\end{align}
		The first term tends to zero uniformly in \(x\), by the uniform continuity
		of \(f\) and Proposition~\ref{prop:Holder}, which yields
		\[
		\sup_{x\in\overline D}
		\Esp_x\!\left[
		\sup_{0\le s\le t}|W_s^+-x|^q
		\right]
		\le C t^{q\theta}.
		\]
		
		For the second, \(1-e^{-z}\le z\) gives
		\[
		\Esp_x\!\left[1-e^{-K^\alpha(t)}\right]
		\le
		\|\kappa\|_\infty\Esp_x[L_t].
		\]
		Again by Proposition~\ref{prop:Holder},
		\[
		\sup_{x\in\overline D}\Esp_x[L_t]
		\le C t^\theta,
		\]
		so the right-hand side tends to zero uniformly in \(x\). Thus
		\[
		\|S_t^\alpha f-f\|_\infty\longrightarrow0
		\qquad\text{as }t\downarrow0.
		\]
		
		It remains to show that \(S_t^\alpha\) preserves continuity. Let \(x_n\to x\)
		and use the common-reference coupling from Lemma~\ref{lem:reflected-start-continuity}.
		Then, on every bounded operational-time interval,
		\[
		\sup_{s\le T}|W_s^{x_n}-W_s^x|
		+
		\sup_{s\le T}|L_s^{x_n}-L_s^x|
		\longrightarrow0
		\qquad\text{in probability}.
		\]
		Since the fixed path \(\alpha\) has only finitely many jumps on \([0,T]\),
		the weighted local-time integrals are finite sums of increments of \(L\);
		hence the corresponding clocks and killing functionals converge uniformly
		in probability. As the clocks are continuous and strictly increasing, their
		inverses converge uniformly on compact time intervals as well. Hence, for each
		fixed \(t\),
		\[
		e^{-K^{\alpha,x_n}(t)}
		f\!\left(W_{\gamma^{\alpha,x_n}(t)}^{x_n}\right)
		\longrightarrow
		e^{-K^{\alpha,x}(t)}
		f\!\left(W_{\gamma^{\alpha,x}(t)}^{x}\right)
		\]
		in probability. 
		Since these random variables are uniformly bounded by
		\(\|f\|_\infty\), the convergence also holds in \(L^1\), see, e.g., \cite[Theorem~4.6.3]{durrett2019probability}.  
		Thus,
		\[
		(S_t^\alpha f)(x_n)\to(S_t^\alpha f)(x).
		\]
		
		The same coupling argument, together with Proposition~\ref{prop:Holder} and
		the continuity of the inverse clocks, shows that
		\[
		(t,x)\longmapsto (S_t^\alpha f)(x)
		\]
		is jointly continuous on bounded time intervals. Since
		\([0,T]\times\overline D\) is compact, it follows that
		\(t\mapsto S_t^\alpha f\) is continuous in \(C(\overline D)\).
		
		\medskip\noindent\emph{2. Dynkin formula.}
		For $f\in C^2(\overline D)$, It\^o's formula with the outward normal
		convention gives
		\[
		f(W_s^+)-f(x)
		=
		\int_0^s\nabla f(W_u^+)\cdot dB_u
		+
		\frac12\int_0^s\Delta f(W_u^+)\,du
		-
		\int_0^s\partial_{\mathbf n}f(W_u^+)\,dL_u.
		\]
		Applying the product rule to
		\[
		H^\alpha_s f(W_s^+)
		\]
		and stopping at the bounded stopping time \(\gamma^\alpha(t)\le t\), we obtain
		\begin{align}
			&H^\alpha_{\gamma^\alpha(t)}
			f\!\left(W^+_{\gamma^\alpha(t)}\right)-f(x)
			\nonumber\\
			&=
			\mathcal N_t
			+
			\frac12
			\int_0^{\gamma^\alpha(t)}
			H^\alpha_s\Delta f(W_s^+)\,ds
			\nonumber\\
			&\quad
			-
			\int_0^{\gamma^\alpha(t)}
			H^\alpha_s
			\left(
			\partial_{\mathbf n}f
			+\kappa(\alpha_s)f
			\right)(W_s^+)\,dL_s,
			\label{eq:quenched-product-formula}
		\end{align}
		where $\mathcal N_t$ is a zero-mean martingale term. Since
		\[
		H^\alpha_{\gamma^\alpha(t)}
		=
		e^{-K^\alpha(t)},
		\]
		taking expectations gives \eqref{eq:quenched-backward-dynkin}.
		
		\medskip
		\noindent\emph{3. Quenched backward equation.}
		Let \(f\in\mathcal D_\alpha\). 
		Since \(dL_s\) is supported on \(\{W_s^+\in\partial D\}\), the boundary
		condition defining \(\mathcal D_\alpha\) gives, \(dL_s\)-a.e.,
		\[
		-
		\left(
		\partial_{\mathbf n}f+\kappa(\alpha_s)f
		\right)(W_s^+)
		=
		\frac{\rho(\alpha_s)}2
		\Delta f(W_s^+).
		\]
		Substitution into \eqref{eq:quenched-backward-dynkin} and the definition of $ A^{\alpha}$ yields
		\begin{align}
			(S_t^\alpha f)(x)-f(x)
			&=
			\frac12
			\Esp_x\!\left[
			\int_0^{\gamma^\alpha(t)}
			H^\alpha_s\Delta f(W_s^+)
			\,dA^\alpha(s)
			\right].
			\label{eq:quenched-clock-integral}
		\end{align}
		Since \(A^\alpha\) is continuous and strictly increasing and
		\[
		A^\alpha(\gamma^\alpha(t))=t,
		\]
		the change of variables \(u=A^\alpha(s)\) gives
		\begin{align}
			(S_t^\alpha f)(x)-f(x)
			&=
			\frac12
			\int_0^t
			\Esp_x\!\left[
			e^{-K^\alpha(u)}
			\Delta f\!\left(W^+_{\gamma^\alpha(u)}\right)
			\right]du
			\nonumber\\
			&=
			\frac12
			\int_0^t
			(S_u^\alpha\Delta f)(x)\,du.
		\end{align}
		This is \eqref{eq:quenched-backward-mild}. 
		
		By Step~1, the map
		\(u\mapsto S_u^\alpha(\Delta f)\) is continuous in \(C(\overline D)\).
		Hence, by the fundamental theorem of calculus applied to
		\eqref{eq:quenched-backward-mild},
		\[
		\frac{d}{dt}S_t^\alpha f
		=
		\frac12 S_t^\alpha(\Delta f).
		\]
		This completes the proof. 
	\end{proof}
	
	\begin{remark}
		\label{rem:quenched-no-semigroup}
		Equation~\eqref{eq:quenched-backward-dynkin} holds for every
		$f\in C^2(\overline D)$, without imposing the boundary conditions
		defining $\mathcal D_\alpha$.
		The integral identity \eqref{eq:quenched-backward-mild}
		is established for $f\in\mathcal D_\alpha$.
		This class need not be dense in $C(\overline D)$.
		
		Indeed, if two visited regimes $i,j$ satisfy
		$\rho_i=\rho_j$ and $\kappa_i\neq\kappa_j$, then subtracting
		the two boundary conditions gives
		\[
		(\kappa_i-\kappa_j)f=0
		\qquad\text{on }\partial D.
		\]
		Thus every $f\in\mathcal D_\alpha$ vanishes on $\partial D$,
		and $\|1-f\|_\infty\ge1$.
		Hence $\mathcal D_\alpha$ is not dense in $C(\overline D)$
		in the uniform norm.
		
On the other hand,
\[
C_c^\infty(D)\subset\mathcal D_\alpha,
\]
so \(\mathcal D_\alpha\) is always nontrivial and infinite-dimensional.
Viewed as a class of functions on \(D\), it is dense in \(L^p(D,dx)\) for
every \(1\le p<\infty\), since \(C_c^\infty(D)\) is dense in these spaces.
This density is with respect to the interior Lebesgue measure only; it does
not extend to measures that charge the boundary \(\partial D\), which are
precisely the measures appearing in the forward formulation.
		
		We use \eqref{eq:quenched-backward-mild} as a test-function
		identity for the evolution defined by the stochastic
		representation. 
	\end{remark}
	
	\subsection{Annealed backward equation}
	\label{sec:annealed}
	
	For the annealed problem we average over both sources of randomness. The state
	is therefore the joint process $(X_t,I_t)$ of
	Subsection~\ref{subsec:operational-time-construction}; unlike the quenched
	spatial evolution, this process is Markov.
	
	For \(\Phi\in C(\overline D\times S)\), write
	\[
	\Phi_i(x):=\Phi(x,i),
	\]
	and define
	\begin{equation}
		\label{eq:annealed-interior-operator}
		(\mathcal L\Phi)_i(x)
		:=
		\frac12\Delta\Phi_i(x)
		+
		\sum_{j\in S}q_{ij}\Phi_j(x),
		\qquad x\in D.
	\end{equation}
	
	Let \((\mathcal S_t)_{t\ge0}\) be the sub-Markov transition operators defined
	in \eqref{eq:construction-semigroup}, namely
	\begin{equation}
		\label{eq:annealed-semigroup-definition}
		(\mathcal S_t\Phi)(x,i)
		:=
		\Esp_{x,i}\left[
		\Phi(X_t,I_t)\mathbf 1_{\{t<\zeta\}}
		\right].
	\end{equation}
	
	\begin{theorem}[Annealed sticky-elastic switching semigroup]
		\label{thm:annealed-sticky-elastic-semigroup}
		The family \((\mathcal S_t)_{t\ge0}\) is a strongly continuous contraction
		semigroup on \(C(\overline D\times S)\).
		
		Let
		\begin{equation}
			\label{eq:D0-definition}
			\mathcal D_0
			:=
			\left\{
			\begin{array}{l}
				\Phi\in C(\overline D\times S):
				\Phi_i\in C^2(\overline D)\text{ for every }i\in S,\text{ and}\\
				\partial_{\mathbf n}\Phi_i
				+\kappa_i\Phi_i
				+\rho_i
				\left(
				\dfrac12\Delta\Phi_i
				+\displaystyle\sum_{j\in S}q_{ij}\Phi_j
				\right)
				=0
				\quad\text{on }\partial D,\ i\in S
			\end{array}
			\right\}.
		\end{equation}
		Then \(\mathcal D_0\subset D(\mathcal G)\), where \(\mathcal G\) is the
		generator of \((\mathcal S_t)_{t\ge0}\), and for every
		\(\Phi\in\mathcal D_0\),
		\begin{equation}
			\label{eq:switching-generator-abstract}
			(\mathcal G\Phi)(x,i)
			=
			(\mathcal L\Phi)_i(x)
			\qquad x\in D.
		\end{equation}
		
		We only claim the inclusion $\mathcal D_0\subset D(\mathcal G)$; no core
		property or converse characterization of the generator domain is needed.
		
		For every \(\Phi\in C(\overline D\times S)\), the function
		\begin{equation}
			\label{eq:u-semigroup-definition}
			u(t,x,i):=(\mathcal S_t\Phi)(x,i)
		\end{equation}
		is the unique mild solution of the abstract Cauchy problem
		\begin{equation}
			\label{eq:abstract-cauchy-problem}
			\frac{d}{dt}u(t)=\mathcal Gu(t),
			\qquad
			u(0)=\Phi.
		\end{equation}
		If, additionally, $u$ is continuously differentiable in time with values in $C(\overline D\times S)$ and $u(t)\in\mathcal D_0$, then $u$ solves the coupled
		backward system
		\begin{equation}
			\label{eq:coupled-backward-system}
			\partial_tu(t,x,i)
			=
			\frac12\Delta_xu(t,x,i)
			+
			\sum_{j\in S}q_{ij}u(t,x,j),
			\qquad x\in D,
		\end{equation}
		with sticky-elastic boundary condition
		\begin{equation}
			\label{eq:coupled-sticky-elastic-bc}
			\partial_{\mathbf n}u(t,\xi,i)
			+\kappa_i u(t,\xi,i)
			+\rho_i
			\left(
			\frac12\Delta_xu(t,\xi,i)
			+\sum_{j\in S}q_{ij}u(t,\xi,j)
			\right)
			=0,
			\qquad \xi\in\partial D,
		\end{equation}
		and initial datum
		\begin{equation}
			\label{eq:coupled-initial-condition}
			u(0,x,i)=\Phi(x,i).
		\end{equation}
		Using the interior equation, the same boundary condition reads
		\begin{equation}
			\label{eq:dynamic-wentzell-bc}
			\partial_{\mathbf n}u(t,\xi,i)
			+\kappa_i u(t,\xi,i)
			+\rho_i\partial_tu(t,\xi,i)
			=0.
		\end{equation}
	\end{theorem}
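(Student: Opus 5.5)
The plan has four steps: show that \((\mathcal S_t)\) is a Feller-type semigroup on \(C(\overline D\times S)\), identify the generator on \(\mathcal D_0\) through a Dynkin formula for the joint process \(Z=(W^+,\alpha)\), read off the mild solution from Hille--Yosida theory, and deduce the coupled boundary system under the stated regularity.

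\emph{Semigroup property and contractivity.} The semigroup property is immediate from Proposition~\ref{prop:operational-time-process}: \(\mathcal Y\) is a strong Markov (killed) process, so Chapman--Kolmogorov gives \(\mathcal S_{t+s}=\mathcal S_t\mathcal S_s\). Contractivity follows from \(e^{-K(t)}\le1\).

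\emph{Preservation of \(C(\overline D\times S)\) and strong continuity.} Continuity in \(x\) for fixed \(i\) follows as in Step~1 of Theorem~\ref{thm:quenched_sticky_elastic}. Conditionally on \(\alpha\), which has finitely many jumps on \([0,T]\), the coupling of Lemma~\ref{lem:reflected-start-continuity} gives convergence in probability of clocks, inverse clocks and killing functionals. Bounded convergence, applied first under \(\Esp^W\) and then under \(\Esp^\alpha_i\), closes the argument. Continuity in the discrete variable \(i\) is trivial.

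Strong continuity at \(0\) repeats the bound \eqref{eq:quenched-strong-continuity-bound} with one extra term. This term is \(\|\Phi\|_\infty\Prb_i^\alpha(\alpha \text{ jumps before } t)\le \|\Phi\|_\infty\max_i|q_{ii}|\,t\), and it accounts for the change of \(I_t\) using \(\gamma(t)\le t\).

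\emph{Generator on \(\mathcal D_0\).} For \(\Phi\in\mathcal D_0\), apply Itô's formula to \(H_s\Phi(W^+_s,\alpha_s)\), where \(H_s=\exp(-\int_0^s\kappa(\alpha_u)dL_u)\). The chain contributes \(\sum_j q_{\alpha_s j}\Phi_j(W_s^+)\,ds\) plus a martingale; this is the standard Dynkin formula for the independent product, with compensator \(Q\). Stopping at the bounded time \(\gamma(t)\) gives
\[
\mathcal S_t\Phi-\Phi=\Esp\Bigl[\int_0^{\gamma(t)}H_s(\mathcal L\Phi)_{\alpha_s}(W_s^+)\,ds-\int_0^{\gamma(t)}H_s(\partial_{\mathbf n}\Phi_{\alpha_s}+\kappa(\alpha_s)\Phi_{\alpha_s})(W_s^+)\,dL_s\Bigr].
\]
The boundary condition in \(\mathcal D_0\) turns the \(dL\) term into \(\int H_s\rho(\alpha_s)(\mathcal L\Phi)_{\alpha_s}\,dL_s\). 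Combining with the \(ds\) term produces \(\int_0^{\gamma(t)}H_s(\mathcal L\Phi)_{\alpha_s}(W^+_s)\,dA(s)\). The change of variables \(u=A(s)\) then yields \(\mathcal S_t\Phi-\Phi=\int_0^t\mathcal S_u(\mathcal L\Phi)\,du\).

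Here \(\mathcal L\Phi\) is a continuous function on \(\overline D\times S\), extending up to the boundary since \(\Phi_i\in C^2(\overline D)\). Strong continuity therefore gives \(t^{-1}(\mathcal S_t\Phi-\Phi)\to\mathcal L\Phi\) uniformly, so \(\Phi\in D(\mathcal G)\) and \(\mathcal G\Phi=\mathcal L\Phi\).

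The main technical point is justifying the Itô formula for the joint process, with the jump martingale of the chain interacting with the local-time integrator. Since \(\alpha\) is independent and piecewise constant, I would first try to handle it by freezing \(\alpha\), applying Theorem~\ref{thm:quenched_sticky_elastic}(2) between jump times, and summing. The Lévy system of \(\alpha\) then supplies the \(Q\)-term after averaging.

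\emph{Mild solution and coupled system.} For a \(C_0\)-semigroup, \(u(t)=\mathcal S_t\Phi\) is the unique mild solution of \eqref{eq:abstract-cauchy-problem}. This holds by standard theory (Pazy): \(\int_0^tu\in D(\mathcal G)\) with \(\mathcal G\int_0^t u=u(t)-\Phi\), and uniqueness follows by differentiating \(s\mapsto\mathcal S_{t-s}v(s)\) for another mild solution \(v\).

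If \(u\in C^1\) and \(u(t)\in\mathcal D_0\), then \(\partial_tu=\mathcal Gu=\mathcal Lu\) by the generator identification, which gives \eqref{eq:coupled-backward-system}. The boundary condition \eqref{eq:coupled-sticky-elastic-bc} is the membership \(u(t)\in\mathcal D_0\). Extending the interior equation to \(\partial D\) by continuity of both sides yields \eqref{eq:dynamic-wentzell-bc}.
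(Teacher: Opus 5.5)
Your outline follows the paper's proof step for step. The semigroup property comes from the Markov construction of Proposition~\ref{prop:operational-time-process}. Continuity uses the common-noise coupling, and strong continuity uses the extra chain-jump term of order \(t\). The generator is identified by the It\^o/Dynkin formula for the switched process: the boundary condition turns the \(dL\)-term into \(\rho(\alpha_s)\,dL_s\), and the change of variables \(u=A(s)\) follows. The last part is standard \(C_0\)-semigroup theory. There is, however, one genuine gap: the proof that \(\mathcal S_t\) maps \(C(\overline D\times S)\) into itself.

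You say that continuity in \(x\) follows as in Step~1 of Theorem~\ref{thm:quenched_sticky_elastic}. That argument only handles functionals that depend continuously on the inverse clock. These are \(W^+_{\gamma(t)}\) and \(K(t)=\int_0^{\gamma(t)}\kappa(\alpha_s)\,dL_s\), which is continuous in its upper limit because \(L\) is continuous. The annealed observable \(\Phi(W^+_{\gamma(t)},\alpha_{\gamma(t)})e^{-K(t)}\) also contains \(I_t=\alpha_{\gamma(t)}\), and \(s\mapsto\alpha_s\) is discontinuous at its jump times. So \(\gamma^{x_m}(t)\to\gamma^x(t)\) in probability does not by itself give \(\alpha_{\gamma^{x_m}(t)}\to\alpha_{\gamma^x(t)}\). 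For that you need to know that \(\gamma^x(t)\) is almost surely not a jump time of \(\alpha\).

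Knowing that the frozen path \(\alpha\) has only finitely many jumps on \([0,T]\) does not settle this. The question is exactly whether the \(W\)-dependent time \(\gamma^{\alpha,x}(t)\) lands on one of those jump times with positive \(\Prb^W_x\)-probability. Your remark that continuity in the discrete variable \(i\) is trivial is correct, but it does not address this point.

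The paper supplies the missing fact. The time-changed counting process \(N_{ij}(\gamma(t))\) has compensator \(\int_0^t q_{ij}\mathbf 1_{\{I_{r-}=i\}}\,d\gamma(r)\). This is absolutely continuous because \(\gamma\) is \(1\)-Lipschitz, so \(I\) has no jump at a fixed physical time. In your conditional-on-\(\alpha\) formulation, you would first prove this under the joint law \(\Prb_{x,i}\). Fubini then gives, for \(\Prb^\alpha_i\)-a.e. \(\alpha\), that \(\Prb^W_x\bigl(\gamma^{\alpha,x}(t)\text{ is a jump time of }\alpha\bigr)=0\). With this added, \(\alpha_{\gamma^{x_m}(t)}\to\alpha_{\gamma^x(t)}\) in probability, and your bounded-convergence argument closes.
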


	\begin{proof}
		\medskip\noindent\emph{Step 1: contraction and semigroup property.}
		Both follow from Proposition~\ref{prop:operational-time-process}.

		\medskip
		\noindent\emph{Step 2: preservation of continuity.}
		Fix \(t>0\), \(i\in S\), and let \(x_m\to x\). Couple the reflected motions as in
		Lemma~\ref{lem:reflected-start-continuity}, using the same independent chain
		started from \(i\).
		
		As in the proof of Theorem~\ref{thm:quenched_sticky_elastic}, the reflected
		paths and local times converge uniformly in probability on bounded operational
		intervals. Since the chain has only finitely many jumps on such intervals, the
		corresponding clocks and killing functionals converge uniformly in probability,
		and so do their inverse clocks on compact physical-time intervals.
		
		It remains only to treat the chain evaluated at the inverse clock. For \(i\neq j\),
		let
		\[
		N_{ij}(s)
		:=
		\#\{u\le s:\alpha_{u-}=i,\ \alpha_u=j\}.
		\]
		Then the physical-time counting process \(N_{ij}(\gamma(t))\) has compensator
		\[
		\int_0^t q_{ij}\mathbf1_{\{I_{r-}=i\}}\,d\gamma(r).
		\]
		Since \(\gamma\) is \(1\)-Lipschitz, this compensator is absolutely continuous
		with respect to Lebesgue measure. Hence \(I\) has no jump at a fixed physical
		time. Therefore, almost surely, \(\gamma^x(t)\) is not a jump time of the common
		chain, and
		\[
		\alpha_{\gamma^{x_m}(t)}
		\longrightarrow
		\alpha_{\gamma^x(t)}
		\qquad\text{in probability}.
		\]
		
		Together with the convergences recalled above, this implies that the killed
		observables converge in probability. Since they are uniformly bounded by
		\(\|\Phi\|_\infty\), the convergence also holds in \(L^1\). Therefore
		\[
		(\mathcal S_t\Phi)(x_m,i)\to(\mathcal S_t\Phi)(x,i).
		\]
		
		\noindent\emph{Step 3: strong continuity.}
		Let
		\[
		q_*:=\max_{i\in S}(-q_{ii}).
		\]
		Because $\gamma(t)\le t$, the chain changes state before
		$\gamma(t)$ with probability at most $q_*t$. The reflected motion is
		uniformly stochastically continuous on the compact state space (recall the proof of Theorem~\ref{thm:quenched_sticky_elastic}). Hence uniform
		continuity of $\Phi$ gives
		\[
		\sup_{(x,i)}
		\Esp_{x,i}\!\left[
		\left|
		\Phi(W^+_{\gamma(t)},\alpha_{\gamma(t)})
		-\Phi(x,i)
		\right|
		\right]
		\longrightarrow0.
		\]
		The killing term satisfies
		\[
		\Esp_{x,i}[1-e^{-K(t)}]
		\le
		\|\kappa\|_\infty\Esp_x[L_t],
		\]
		which also tends to zero uniformly in $x$. Thus
		\[
		\|\mathcal S_t\Phi-\Phi\|_\infty\longrightarrow0,
		\qquad t\downarrow0.
		\]
		This proves strong continuity.
		
		\medskip
		\noindent\emph{Step 4: identification of the generator.}
		Let \(\Phi\in\mathcal D_0\), and set
		\[
		B(s):=\int_0^s\kappa(\alpha_u)\,dL_u.
		\]
		Applying the generalized It\^o formula for switching diffusions to
		\(\Phi(W_s^+,\alpha_s)\), together with the reflection term, gives
		\[
		d\Phi(W_s^+,\alpha_s)
		=
		d\mathcal M_s
		+
		(\mathcal L\Phi)_{\alpha_s}(W_s^+)\,ds
		-
		\partial_{\mathbf n}\Phi_{\alpha_s}(W_s^+)\,dL_s,
		\]
		where \(\mathcal M\) is a local martingale. Here the compensated jumps of
		\(\alpha\) produce the term
		\[
		\sum_{j\in S}q_{\alpha_s j}\Phi_j(W_s^+)\,ds
		\]
		appearing in \(\mathcal L\); see, e.g.,
		\cite[Section~2.2]{ngyuen2025_book}.
		
		Since
		\[
		d(e^{-B(s)})
		=
		-e^{-B(s)}\kappa(\alpha_s)\,dL_s,
		\]
		the product rule yields
		\begin{align}
			&e^{-B(\gamma(t))}
			\Phi(W^+_{\gamma(t)},\alpha_{\gamma(t)})
			-\Phi(x,i)
			\nonumber\\
			&=
			\mathcal N_t
			+
			\int_0^{\gamma(t)}
			e^{-B(s)}
			(\mathcal L\Phi)_{\alpha_s}(W_s^+)\,ds
			\nonumber\\
			&\quad
			-
			\int_0^{\gamma(t)}
			e^{-B(s)}
			\left(
			\partial_{\mathbf n}\Phi_{\alpha_s}
			+\kappa(\alpha_s)\Phi_{\alpha_s}
			\right)(W_s^+)\,dL_s,
			\label{eq:annealed-ito-formula}
		\end{align}
		where \(\mathcal N_t\) has zero expectation; indeed,
		\(\gamma(t)\le t\) is a bounded stopping time and all coefficients involved are
		bounded on \(\overline D\times S\).
		
		For \(\Phi\in\mathcal D_0\),
		\[
		-
		\left(
		\partial_{\mathbf n}\Phi_i+\kappa_i\Phi_i
		\right)
		=
		\rho_i(\mathcal L\Phi)_i
		\qquad\text{on }\partial D.
		\]
		Since \(dL_s\) is supported on \(\{W_s^+\in\partial D\}\), taking expectations
		in \eqref{eq:annealed-ito-formula} and using
		\[
		dA(s)=ds+\rho(\alpha_s)\,dL_s
		\]
		gives
		\begin{equation}
			\label{eq:annealed-dynkin-formula}
			(\mathcal S_t\Phi)(x,i)-\Phi(x,i)
			=
			\Esp_{x,i}\!\left[
			\int_0^{\gamma(t)}
			e^{-B(s)}
			(\mathcal L\Phi)_{\alpha_s}(W_s^+)\,dA(s)
			\right].
		\end{equation}
		
		Since \(A\) is continuous and strictly increasing, with inverse \(\gamma\),
		the change of variables \(r=A(s)\) gives
		\[
		\mathcal S_t\Phi-\Phi
		=
		\int_0^t\mathcal S_r(\mathcal L\Phi)\,dr.
		\]
		As \(\mathcal L\Phi\in C(\overline D\times S)\), strong continuity implies
		\[
		\left\|
		\frac{\mathcal S_t\Phi-\Phi}{t}-\mathcal L\Phi
		\right\|_\infty
		\le
		\sup_{0\le r\le t}
		\|\mathcal S_r(\mathcal L\Phi)-\mathcal L\Phi\|_\infty
		\longrightarrow0.
		\]
		Thus
		\[
		\Phi\in D(\mathcal G),
		\qquad
		\mathcal G\Phi=\mathcal L\Phi.
		\]
		
		\medskip
		\noindent\emph{Step 5: the backward equation.}
		The semigroup now gives the abstract Cauchy problem directly:
		$u(t)=\mathcal S_t\Phi$ is its unique mild solution.
		
		If $u$ is continuously differentiable in time with values in $C(\overline D\times S)$ and $u(t)\in\mathcal D_0$, the generator formula gives
		\eqref{eq:coupled-backward-system}, while membership in $\mathcal D_0$ gives
		\eqref{eq:coupled-sticky-elastic-bc}. The initial condition follows from
		strong continuity. Finally, substituting
		\[
		\partial_tu_i
		=
		\frac12\Delta u_i+\sum_{j\in S}q_{ij}u_j
		\]
		into \eqref{eq:coupled-sticky-elastic-bc} gives
		\eqref{eq:dynamic-wentzell-bc}.
	\end{proof}

	\section{Forward formulation and homogenization}
	\label{sec:forward-homogenization}
	
The backward formulation describes the evolution of observables and is the
natural setting for the Feynman--Kac representation used in the preceding
sections. From a physical point of view, however, the forward formulation is
often more natural, since physical distributions evolve forward in time. It
describes the evolution of an initial distribution of particles under the
switching boundary dynamics. In particular, when killing is present, the
total mass of the forward measure represents the surviving mass at a given
physical time and, for a probability initial measure, the corresponding
survival probability. It is therefore useful to establish quantitative
homogenization results for the quenched forward evolution.
	
	We first derive the quenched forward formulation by duality and then show that
	the homogenization estimates transfer to the forward measures. In the general
	case we obtain quantitative convergence in the bounded-Lipschitz distance,
	while under the proportionality and compatibility assumptions of
	Theorem~\ref{thm:improved-smooth-observable} we recover the improved
	\(1/4\)-scale for smooth observables and, in particular, for the total
	surviving mass.
	
	Let us introduce some further notation for this part. 
	For a compact space \(F\), let \(\mathcal M(F)\) denote the space of finite
	signed Radon measures on \(F\), equipped with the total variation norm
	\(\|\cdot\|_{\mathrm{TV}}\).
	
	We use \emph{narrow convergence} for the weak-\(*\) topology induced by
	\(C(F)\): thus, for \(\mu_n,\mu\in\mathcal M(F)\),
	\[
	\mu_n\to\mu
	\quad\text{narrowly}
	\]
	if
	\[
	\int_F f\,d\mu_n
	\longrightarrow
	\int_F f\,d\mu
	\qquad
	\text{for every }f\in C(F).
	\]
	Accordingly, a map \(t\mapsto\mu_t\in\mathcal M(F)\) is said to be
	\emph{narrowly continuous} if
	\[
	t\longmapsto\int_F f\,d\mu_t
	\]
	is continuous for every \(f\in C(F)\).
	
	\subsection{Quenched forward formulation}

	Let \(\mu_0\in\mathcal M(\overline D)\) be an initial finite measure.
	When \(\mu_0\) is nonnegative, it may be interpreted as the initial spatial
	distribution of particles. We define the corresponding quenched forward
	measure by duality:
	\begin{equation}
		\label{eq:quenched-forward-duality}
		\mu_t^\alpha
		:=
		(S_t^\alpha)^*\mu_0,
	\end{equation}
	that is,
	\[
	\int_{\overline D}f(x)\,\mu_t^\alpha(dx)
	=
	\int_{\overline D}(S_t^\alpha f)(x)\,\mu_0(dx),
	\qquad f\in C(\overline D).
	\]
	
	Integrating the backward identities against the initial measure
	gives the forward equation and identifies the mass lost through
	killing. For the latter it suffices to take $f=1 $.
	
	\begin{theorem}[Quenched forward equation]
		\label{thm:forward_quenched}
		Let \(\mu_0\in\mathcal M(\overline D)\), and define
		\((\mu_t^\alpha)_{t\ge0}\) by
		\eqref{eq:quenched-forward-duality}. Then the following properties hold.
		
		\begin{enumerate}
			\item For every \(t\ge0\),
			\begin{equation}
				\label{eq:forward-tv-contraction}
				\|\mu_t^\alpha\|_{\mathrm{TV}}
				\le
				\|\mu_0\|_{\mathrm{TV}}.
			\end{equation}
			If \(\kappa\equiv0\), then total mass is conserved.
			
			\item The map \(t\mapsto\mu_t^\alpha\) is narrowly continuous.
			
			\item For every \(f\in C^2(\overline D)\),
			\begin{equation}
				\label{eq:quenched-forward-dynkin}
				\begin{aligned}
					&\int_{\overline D}f(y)\,\mu_t^\alpha(dy)
					-
					\int_{\overline D}f(y)\,\mu_0(dy)
					\\
					&=
					\int_{\overline D}
					\Esp_x\!\left[
					\frac12
					\int_0^{\gamma^\alpha(t)}
					H^\alpha_s
					\Delta f(W_s^+)\,ds
					\right]\mu_0(dx)
					\\
					&\quad
					-
					\int_{\overline D}
					\Esp_x\!\left[
					\int_0^{\gamma^\alpha(t)}
					H^\alpha_s
					\left(
					\partial_{\mathbf n}f
					+\kappa(\alpha_s)f
					\right)(W_s^+)\,dL_s
					\right]\mu_0(dx).
				\end{aligned}
			\end{equation}
			
			\item If \(f\in\mathcal D_\alpha\), then
			\begin{equation}
				\label{eq:weak-forward-equation}
				\int_{\overline D}f\,d\mu_t^\alpha
				-
				\int_{\overline D}f\,d\mu_0
				=
				\frac12
				\int_0^t
				\int_{\overline D}\Delta f(y)\,
				\mu_s^\alpha(dy)\,ds.
			\end{equation}
			Equivalently,
			\begin{equation}
				\label{eq:weak-forward-derivative}
				\frac{d}{dt}
				\int_{\overline D}f\,d\mu_t^\alpha
				=
				\frac12
				\int_{\overline D}\Delta f\,d\mu_t^\alpha
			\end{equation}
			for every \(t\ge0\).
			
			\item More generally, let
			\(\varphi\in C^1([0,T];C(\overline D))\) satisfy
			\[
			\varphi(t,\cdot)\in\mathcal D_\alpha,
			\qquad
			\Delta\varphi\in C([0,T]\times\overline D).
			\]
			Then
			\begin{equation}
				\label{eq:weak-forward-time-dependent}
				\begin{aligned}
					&\int_{\overline D}\varphi(T,x)\,\mu_T^\alpha(dx)
					-
					\int_{\overline D}\varphi(0,x)\,\mu_0(dx)
					\\
					&\qquad
					=
					\int_0^T
					\int_{\overline D}
					\left(
					\partial_t\varphi(t,x)
					+\frac12\Delta\varphi(t,x)
					\right)
					\mu_t^\alpha(dx)\,dt.
				\end{aligned}
			\end{equation}
		\end{enumerate}
	\end{theorem}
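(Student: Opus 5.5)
The plan is to obtain every item by integrating the corresponding backward statement of Theorem~\ref{thm:quenched_sticky_elastic} against \(\mu_0\), using the defining duality \eqref{eq:quenched-forward-duality}. Before anything else, we check that \(\mu_t^\alpha\) is well defined. By Theorem~\ref{thm:quenched_sticky_elastic}(1), \(f\mapsto\int S_t^\alpha f\,d\mu_0\) is a bounded linear functional on \(C(\overline D)\), with norm at most \(\|\mu_0\|_{\mathrm{TV}}\). The Riesz representation theorem on the compact space \(\overline D\) then produces a unique \(\mu_t^\alpha\in\mathcal M(\overline D)\), and this already gives \eqref{eq:forward-tv-contraction}.

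For mass conservation when \(\kappa\equiv0\), we note that \(K^\alpha\equiv0\), so \(S_t^\alpha1=1\). Hence \(\mu_t^\alpha(\overline D)=\mu_0(\overline D)\).

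For item (2), fix \(f\in C(\overline D)\). Theorem~\ref{thm:quenched_sticky_elastic}(1) shows that \(t\mapsto S_t^\alpha f\) is continuous in \(C(\overline D)\); this is the joint continuity step in its proof. The bound \(|\int(S_t^\alpha f-S_s^\alpha f)\,d\mu_0|\le\|S_t^\alpha f-S_s^\alpha f\|_\infty\|\mu_0\|_{\mathrm{TV}}\) then gives narrow continuity.

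Items (3) and (4) come from integrating \eqref{eq:quenched-backward-dynkin} and \eqref{eq:quenched-backward-mild} in \(x\) against \(\mu_0\). For (3) we need the two expectations in \eqref{eq:quenched-backward-dynkin} to be bounded and measurable in \(x\). Boundedness follows from \(H^\alpha\le1\), \(\gamma^\alpha(t)\le t\), \(f\in C^2\), and \(\sup_x\Esp_xL_t<\infty\) (Lemma~\ref{lem:short-time-growth-local-time}). Measurability follows because each expectation is a difference of \(S_t^\alpha f-f\) and a bounded measurable function of the Markov kernel. For (4) we use Fubini in \((s,x)\), which is legitimate because \(|S_s^\alpha\Delta f|\le\|\Delta f\|_\infty\) and \((s,x)\mapsto S_s^\alpha\Delta f(x)\) is jointly continuous. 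Rewriting \(\int S_s^\alpha(\Delta f)\,d\mu_0=\int\Delta f\,d\mu_s^\alpha\) gives \eqref{eq:weak-forward-equation}. The derivative form \eqref{eq:weak-forward-derivative} follows from the fundamental theorem of calculus, since the integrand \(s\mapsto\int\Delta f\,d\mu_s^\alpha\) is continuous by item (2).

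The main obstacle is item (5), because there the test function depends on time. The plan is to prove a product rule for \(g(t):=\int\varphi(t)\,d\mu_t^\alpha\). Write
\[
g(t+h)-g(t)=\int(\varphi(t+h)-\varphi(t))\,d\mu_{t+h}^\alpha+\Bigl(\int\varphi(t)\,d\mu_{t+h}^\alpha-\int\varphi(t)\,d\mu_t^\alpha\Bigr).
\]
For the first term, divide by \(h\). Then \((\varphi(t+h)-\varphi(t))/h\to\partial_t\varphi(t)\) uniformly, because \(\varphi\in C^1([0,T];C(\overline D))\); together with the TV bound and narrow continuity this gives the limit \(\int\partial_t\varphi(t)\,d\mu_t^\alpha\). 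For the second term, apply item (4) to the fixed function \(\varphi(t)\in\mathcal D_\alpha\); after dividing by \(h\) it tends to \(\frac12\int\Delta\varphi(t)\,d\mu_t^\alpha\).

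Both limits are continuous in \(t\). This uses the joint continuity of \(\Delta\varphi\), hence the uniform continuity of \(t\mapsto\Delta\varphi(t)\) in \(C(\overline D)\), together with narrow continuity and the uniform TV bound. Therefore \(g\) is \(C^1\), and integrating from \(0\) to \(T\) yields \eqref{eq:weak-forward-time-dependent}.

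The delicate point is the second-term limit. Here \(\varphi(t)\) is frozen, so item (4) gives the exact expression \(\frac1{2h}\int_t^{t+h}\int\Delta\varphi(t)\,d\mu_s^\alpha\,ds\). The difference quotient therefore converges by narrow continuity alone, with no need for uniformity in the test function.
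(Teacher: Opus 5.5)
Your proposal is correct and follows the paper's proof essentially step for step. The TV bound comes from duality via the Riesz representation, items (3)--(4) come from integrating the backward Dynkin and mild identities against $\mu_0$, and item (5) uses the same two-term difference-quotient decomposition of $g(t)=\int\varphi(t)\,d\mu_t^\alpha$. The only minor variation is in item (2): you use the norm continuity of $t\mapsto S_t^\alpha f$ from the proof of Theorem~\ref{thm:quenched_sticky_elastic}, whereas the paper needs only pointwise continuity of $t\mapsto S_t^\alpha f(x)$ (pathwise continuity of $W^+_{\gamma^\alpha(t)}$ and $K^\alpha(t)$ plus bounded convergence) followed by dominated convergence against $|\mu_0|$; either suffices.
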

	
	The short duality proof is deferred to Appendix~\ref{app:forward-proofs}.
	
	\begin{remark}
		\label{rem:forward-density}
		The family \((\mu_t^\alpha)_{t\ge0}\) is the quenched spatial law associated
		with the frozen environment \(\alpha\).
		Equation~\eqref{eq:quenched-forward-dynkin} is its
		exact forward weak formulation. Equation~\eqref{eq:weak-forward-equation}
		is a closed relation only on the common test class
		\(\mathcal D_\alpha\).
		
		In general, without adjoining the operational phase as an additional state
		variable, one cannot expect a closed Markov forward equation or a uniqueness
		theorem based only on the spatial variable. This does not affect the
		homogenization result, which uses directly the quenched Feynman--Kac
		representation \eqref{eq:quenched-spatial-operator}.
	\end{remark}
	
	\subsection{Forward homogenization}
	\label{subsec:forward-homogenization}
	
	We now transfer the quenched homogenization estimates to the corresponding
	forward measure evolutions. Let \(\mu_0\) be a finite nonnegative Radon
	measure on \(\overline D\). For the accelerated environment
	\[
	\alpha^\varepsilon_s:=\alpha_{s/\varepsilon},
	\]
	define the quenched forward measure
	\(\mu_t^{\varepsilon,\alpha}\) by duality:
	\begin{equation}
		\label{eq:forward-epsilon-duality}
		\int_{\overline D} f(y)\,
		\mu_t^{\varepsilon,\alpha}(dy)
		:=
		\int_{\overline D}
		u_\alpha^\varepsilon(t,x;f)\,
		\mu_0(dx),
		\qquad
		f\in C(\overline D),
	\end{equation}
	where \(u_\alpha^\varepsilon(t,x;f)\) denotes the quenched
	Feynman--Kac evolution with initial datum \(f\).
	Likewise, define the homogenized forward measure \(\mu_t^0\) by
	\begin{equation}
		\label{eq:forward-zero-duality}
		\int_{\overline D} f(y)\,\mu_t^0(dy)
		:=
		\int_{\overline D}
		u^0(t,x;f)\,\mu_0(dx).
	\end{equation}
	
	For a bounded Lipschitz function \(f\), set
	\[
	\|f\|_{\mathrm{BL}}
	:=
	\|f\|_\infty+\operatorname{Lip}(f),
	\]
	and, for finite nonnegative measures \(\mu,\nu\) on \(\overline D\), define
	the bounded-Lipschitz distance
	\begin{equation}
		\label{eq:BL-distance}
		d_{\mathrm{BL}}(\mu,\nu)
		:=
		\sup_{\|f\|_{\mathrm{BL}}\le1}
		\left|
		\int_{\overline D} f\,d(\mu-\nu)
		\right|.
	\end{equation}
	
Since \(\overline D\) is compact, the bounded-Lipschitz metric metrizes
narrow convergence on probability measures; see
\cite[Theorem~11.3.3]{dudley2002real}. The same conclusion extends
immediately to finite nonnegative measures with uniformly bounded total mass.
	
	\begin{corollary}[Quenched forward homogenization]
		\label{cor:quenched-forward-homogenization}
		Let \(\mu_0\) be a finite nonnegative Radon measure on
		\(\overline D\). For every \(T>0\) and every \(0<\beta<1/8\), there
		exists \(C=C(T,\beta)>0\), independent of \(\varepsilon\) and
		\(\mu_0\), such that
		\begin{equation}
			\label{eq:forward-BL-fixed}
			\sup_{0\le t\le T}
			\Esp^\alpha
			\left[
			d_{\mathrm{BL}}
			\bigl(
			\mu_t^{\varepsilon,\alpha},
			\mu_t^0
			\bigr)
			\right]
			\le
			C\,\mu_0(\overline D)\,
			\varepsilon^\beta,
		\end{equation}
		and
		\begin{equation}
			\label{eq:forward-BL-uniform}
			\Esp^\alpha
			\left[
			\sup_{0\le t\le T}
			d_{\mathrm{BL}}
			\bigl(
			\mu_t^{\varepsilon,\alpha},
			\mu_t^0
			\bigr)
			\right]
			\le
			C\,\mu_0(\overline D)\,
			\varepsilon^\beta.
		\end{equation}
		In particular,
		\[
		\sup_{0\le t\le T}
		d_{\mathrm{BL}}
		\bigl(
		\mu_t^{\varepsilon,\alpha},
		\mu_t^0
		\bigr)
		\longrightarrow0
		\qquad
		\text{in }\Prb^\alpha\text{-probability}.
		\]
		Hence the quenched forward measures converge narrowly to the homogenized
		forward evolution, uniformly on bounded time intervals in probability.
	\end{corollary}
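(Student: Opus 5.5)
The plan is to reduce the bounded-Lipschitz distance to the pathwise coupling estimates already obtained in the proof of Theorem~\ref{thm:SE_homogenization_1}. The key observation is that the bound \eqref{eq:Y-difference-bound} does not depend on \(f\) beyond \(\operatorname{Lip}(f)\) and \(\|f\|_\infty\). Fix a realization of \(\alpha\), a time \(t\in[0,T]\), and \(f\) with \(\|f\|_{\mathrm{BL}}\le1\). By the duality definitions \eqref{eq:forward-epsilon-duality}--\eqref{eq:forward-zero-duality} and the common-reference coupling,
\[
\int_{\overline D}f\,d\bigl(\mu_t^{\varepsilon,\alpha}-\mu_t^0\bigr)
=\int_{\overline D}\Esp_x^W\bigl[Y_\varepsilon(t)-Y_0(t)\bigr]\,\mu_0(dx).
\]
Using \eqref{eq:Y-difference-bound} with \(\operatorname{Lip}(f)\le1\) and \(\|f\|_\infty\le1\), the absolute value is bounded by
\[
\int_{\overline D}\Esp_x^W\bigl[R_\varepsilon(t)\bigr]\,\mu_0(dx),
\qquad
R_\varepsilon(t):=\bigl|W^+_{\gamma_\varepsilon(t)}-W^+_{\gamma_0(t)}\bigr|+|K_\varepsilon(t)-K_0(t)|.
\]
The right-hand side is independent of \(f\). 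Taking the supremum over \(f\) therefore gives
\[
d_{\mathrm{BL}}(\mu_t^{\varepsilon,\alpha},\mu_t^0)\le\int_{\overline D}\Esp_x^W[R_\varepsilon(t)]\,\mu_0(dx),
\]
and, for the uniform statement,
\[
\sup_{t\le T}d_{\mathrm{BL}}(\mu_t^{\varepsilon,\alpha},\mu_t^0)\le\int_{\overline D}\Esp_x^W\Bigl[\sup_{t\le T}R_\varepsilon(t)\Bigr]\,\mu_0(dx).
\]
Here \(\mu_0\ge0\) is used to move the absolute value inside the integral.

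Next, I take \(\Esp^\alpha\) and apply Tonelli's theorem, since all integrands are nonnegative and \(W^+\) and \(\alpha\) are independent. This gives
\[
\Esp^\alpha\bigl[d_{\mathrm{BL}}\bigr]\le\mu_0(\overline D)\,\sup_{x\in\overline D}\Esp_{x,\pi}[R_\varepsilon(t)].
\]
For fixed \(t\), the right-hand side is at most \(C\mu_0(\overline D)\varepsilon^\beta\), uniformly in \(t\le T\). This follows from Proposition~\ref{thm:sticky}\ref{item:sticky_marginal} with \(p=1\) together with \eqref{eq:fixed-time-killing-rate}, which gives \eqref{eq:forward-BL-fixed}. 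For \eqref{eq:forward-BL-uniform}, I use Proposition~\ref{thm:sticky}\ref{item:sticky_uniform} and the uniform killing estimate established in the proof of Theorem~\ref{thm:SE_homogenization_1}\ref{item:sup}. The constants depend only on \(T\) and \(\beta\), not on \(\mu_0\), as claimed.

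The one point that needs care, and which I expect to be the only real obstacle, is measurability. Both \(d_{\mathrm{BL}}(\mu_t^{\varepsilon,\alpha},\mu_t^0)\) and its supremum over \(t\) involve uncountable suprema as functions of \(\alpha\). I will handle this as follows. Since \(\overline D\) is compact, the unit ball of \(\mathrm{BL}\) is separable in \(\|\cdot\|_\infty\) by Arzel\`a--Ascoli. Because the measures have total variation at most \(\mu_0(\overline D)\), the supremum over \(f\) can be taken over a countable \(\|\cdot\|_\infty\)-dense subset. For the supremum in \(t\), I use narrow continuity in \(t\) (Theorem~\ref{thm:forward_quenched}(2), applied to the accelerated path, and the analogous statement for \(\mu^0_t\)). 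The supremum over \([0,T]\) is then a supremum over rational times, so both quantities are measurable. Alternatively, one can note that they are dominated by the measurable majorant above and argue with outer expectations.

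Finally, Markov's inequality turns \eqref{eq:forward-BL-uniform} into convergence in \(\Prb^\alpha\)-probability. Since \(d_{\mathrm{BL}}\) metrizes narrow convergence for nonnegative measures of mass at most \(\mu_0(\overline D)\), this gives the stated narrow convergence, uniformly on \([0,T]\) in probability.
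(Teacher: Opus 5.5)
Your proposal is correct and follows essentially the same route as the paper: the $f$-independent pathwise majorant $R_\varepsilon(t)$ obtained from \eqref{eq:Y-difference-bound}, duality against the nonnegative $\mu_0$, Tonelli, the coupling estimates of Proposition~\ref{thm:sticky} and of the proof of Theorem~\ref{thm:SE_homogenization_1}, and the Arzel\`a--Ascoli plus narrow-continuity argument for measurability. The only difference is cosmetic: you derive \eqref{eq:forward-BL-fixed} directly from the marginal estimates, whereas the paper deduces it from \eqref{eq:forward-BL-uniform} via $\sup_{t}\Esp^\alpha[\cdot]\le\Esp^\alpha[\sup_{t}\cdot]$.
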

	
	The result follows by duality from
	Theorem~\ref{thm:SE_homogenization_1}. Indeed, the pathwise estimate
	\eqref{eq:Y-difference-bound} is uniform over
	\(\|f\|_{\mathrm{BL}}\le1\), so that taking the supremum over this class
	before averaging introduces no additional loss. The proof is given in
	Appendix~\ref{app:forward-homogenization}.
	
	The improved rate for smooth compatible observables also transfers directly
	to the forward evolution.
	
	\begin{corollary}[Improved forward rate]
		\label{cor:improved-forward-homogenization}
		Assume that, for some \(c\ge0\),
		\[
		\kappa_i=c\rho_i,
		\qquad i\in S,
		\]
		and let \(f\in C^2(\overline D)\) satisfy
		\[
		\partial_{\mathbf n}f=0
		\qquad\text{on }\partial D.
		\]
		Then, for every \(T>0\),
		\begin{equation}
			\label{eq:forward-improved-fixed}
			\sup_{0\le t\le T}
			\Esp^\alpha
			\left|
			\int_{\overline D} f\,d
			\bigl(
			\mu_t^{\varepsilon,\alpha}-\mu_t^0
			\bigr)
			\right|
			\le
			C_f\,\mu_0(\overline D)\,
			\varepsilon^{1/4},
		\end{equation}
		and
		\begin{equation}
			\label{eq:forward-improved-uniform}
			\Esp^\alpha
			\left[
			\sup_{0\le t\le T}
			\left|
			\int_{\overline D} f\,d
			\bigl(
			\mu_t^{\varepsilon,\alpha}-\mu_t^0
			\bigr)
			\right|
			\right]
			\le
			C_f\,\mu_0(\overline D)\,
			\varepsilon^{1/4}
			\bigl(1+|\log\varepsilon|\bigr).
		\end{equation}
	\end{corollary}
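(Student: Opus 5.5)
The plan is to transfer Theorem~\ref{thm:improved-smooth-observable} to the forward measures by duality, integrating the pointwise estimates against \(\mu_0\). By the definitions \eqref{eq:forward-epsilon-duality} and \eqref{eq:forward-zero-duality}, for every fixed environment \(\alpha\) and every \(t\ge0\),
\[
\int_{\overline D} f\,d\bigl(\mu_t^{\varepsilon,\alpha}-\mu_t^0\bigr)
=
\int_{\overline D}\bigl(u_\alpha^\varepsilon(t,x;f)-u^0(t,x;f)\bigr)\,\mu_0(dx).
\]
Since \(\mu_0\) is nonnegative, the absolute value of the left-hand side is bounded by \(\int_{\overline D}|u_\alpha^\varepsilon(t,x)-u^0(t,x)|\,\mu_0(dx)\).

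For \eqref{eq:forward-improved-fixed}, fix \(t\in[0,T]\) and take \(\Esp^\alpha\). Tonelli's theorem applies because the integrand is nonnegative and jointly measurable in \((\alpha,x)\); measurability follows from the Feynman--Kac representation as an expectation of a jointly measurable functional. This gives
\[
\Esp^\alpha\Bigl|\int f\,d(\mu_t^{\varepsilon,\alpha}-\mu_t^0)\Bigr|
\le
\mu_0(\overline D)\sup_{x\in\overline D}\Esp^\alpha|u_\alpha^\varepsilon(t,x)-u^0(t,x)|.
\]
Taking the supremum over \(t\in[0,T]\) and applying \eqref{eq:smooth-observable-fixed-rate} yields the bound \(C_f\,\mu_0(\overline D)\,\varepsilon^{1/4}\).

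For \eqref{eq:forward-improved-uniform}, first bring the supremum inside the spatial integral:
\[
\sup_{t\le T}\Bigl|\int f\,d(\mu_t^{\varepsilon,\alpha}-\mu_t^0)\Bigr|
\le
\int_{\overline D}\sup_{t\le T}|u_\alpha^\varepsilon(t,x)-u^0(t,x)|\,\mu_0(dx).
\]
Then average over \(\alpha\), apply Tonelli, and use \eqref{eq:smooth-observable-max-rate}, which is uniform in \(x\). This gives the bound \(C_f\,\mu_0(\overline D)\,\varepsilon^{1/4}(1+|\log\varepsilon|)\).

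The main technical point is measurability of \((\alpha,x)\mapsto\sup_{t\le T}|u_\alpha^\varepsilon(t,x)-u^0(t,x)|\). Tonelli is needed to interchange \(\Esp^\alpha\) with \(\int\mu_0(dx)\), and this requires the map to be jointly measurable. The argument is as follows. By Theorem~\ref{thm:quenched_sticky_elastic}, \((t,x)\mapsto u_\alpha^\varepsilon(t,x)\) is jointly continuous for each \(\alpha\), so the supremum over \(t\) can be taken over rational \(t\). For each fixed \((t,x)\), measurability in \(\alpha\) follows from Fubini applied to the product measure \(\Prb_x^W\otimes\Prb^\alpha\). Joint measurability in \((\alpha,x)\) then follows from continuity in \(x\) (a Carathéodory-type argument). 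Alternatively, one can avoid this issue altogether by bounding the integrand pathwise, as in the proof of Theorem~\ref{thm:improved-smooth-observable}, by \(C_f\Esp_x^W\sup_t|\gamma_\varepsilon(t)-\gamma_0(t)|\). This is a jointly measurable function of \((\alpha,x)\) because the underlying functional is measurable on the product path space.
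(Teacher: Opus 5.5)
Your proof is correct and follows the paper's route. The steps are the same: duality against the nonnegative \(\mu_0\), moving the supremum in time inside the spatial integral, and then Tonelli. The only difference is that the paper's appendix proof does not cite the final estimates of Theorem~\ref{thm:improved-smooth-observable}. Instead it re-derives the pointwise bound \(|u_\alpha^\varepsilon(t,x;f)-u^0(t,x;f)|\le \|cf+\tfrac12\Delta f\|_\infty\,\Esp_x^W|\gamma_\varepsilon(t)-\gamma_0(t)|\) from It\^o's formula and applies the clock estimates directly, which is your alternative in the last paragraph.
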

	
	The proof is an immediate duality consequence of
	Theorem~\ref{thm:improved-smooth-observable}; see
	Appendix~\ref{app:forward-homogenization}.
	
	Taking \(f\equiv1\) gives a particularly natural physical quantity. For a
	nonnegative initial measure \(\mu_0\), set
	\[
	M_\varepsilon^\alpha(t)
	:=
	\mu_t^{\varepsilon,\alpha}(\overline D),
	\qquad
	M_0(t)
	:=
	\mu_t^0(\overline D).
	\]
	These quantities represent the total surviving mass at physical time \(t\).
	If \(\mu_0\) is a probability measure, they may equivalently be interpreted
	as the survival probabilities of particles whose initial positions are
	distributed according to \(\mu_0\).
	Since \(1\in C^2(\overline D)\) and
	\(\partial_{\mathbf n}1=0\), under the proportionality condition
	\(\kappa_i=c\rho_i\) the preceding corollary gives
	\[
	\sup_{0\le t\le T}
	\Esp^\alpha
	\left|
	M_\varepsilon^\alpha(t)-M_0(t)
	\right|
	\le
	C\,\mu_0(\overline D)\,\varepsilon^{1/4},
	\]
	and
	\[
	\Esp^\alpha
	\left[
	\sup_{0\le t\le T}
	\left|
	M_\varepsilon^\alpha(t)-M_0(t)
	\right|
	\right]
	\le
	C\,\mu_0(\overline D)\,
	\varepsilon^{1/4}
	\bigl(1+|\log\varepsilon|\bigr).
	\]
	
	\section{Numerical benchmarks for the homogenization estimates}
	\label{sec:numerical}
	
	We now illustrate numerically the finite-scale behavior predicted by the
	homogenization results. We compare the finite-switching model with its
	homogenized limit and examine the two errors appearing in
	Theorem~\ref{thm:SE_homogenization_1}. The numerical scheme and its
	consistency for fixed \(\varepsilon\) are described in
	Appendix~\ref{app:numerical-scheme}.
	
	We work on the interval \([-R,R]\). 
	In operational time, the reference process evolves according to the SDE
	\[
	dW_s^+
	=
	\sqrt{2D_{\mathrm{Glu}}}\,dB_s
	+dL_s^- -dL_s^+,
	\qquad
	L_s=L_s^-+L_s^+,
	\]
	with \(W_0^+=0\). The boundary environment is a two-state Markov chain with
	\[
	Q=
	\begin{pmatrix}
		-0.7&0.7\\
		0.3&-0.3
	\end{pmatrix}
	\,\mathrm{ms}^{-1},
	\qquad
	\pi=(0.3,0.7),
	\qquad
	\alpha_0\sim\pi,
	\]
	independent of the reflected Brownian motion.
	
	For \(\varepsilon>0\), let
	\[
	A_\varepsilon(s)
	=
	s+\int_0^s
	\rho(\alpha_{r/\varepsilon})\,dL_r,
	\qquad
	\gamma_\varepsilon=A_\varepsilon^{-1},
	\]
	and
	\[
	K_\varepsilon(t)
	=
	\int_0^{\gamma_\varepsilon(t)}
	\kappa(\alpha_{r/\varepsilon})\,dL_r.
	\]
	For an observable \(f\), write
	\[
	Y_\varepsilon^f(t)
	=
	f(W^+_{\gamma_\varepsilon(t)})
	e^{-K_\varepsilon(t)}.
	\]
	The corresponding homogenized quantities are
	\[
	A_0(s)=s+\overline\rho L_s,
	\qquad
	\gamma_0=A_0^{-1},
	\qquad
	K_0(t)=\overline\kappa L_{\gamma_0(t)},
	\]
	and
	\[
	Y_0^f(t)
	=
	f(W^+_{\gamma_0(t)})e^{-K_0(t)}.
	\]
	
	The homogenized backward equation is
	\begin{equation}
		\label{eq:numerical-homogenized-pde}
		\begin{cases}
			\partial_tu^0
			=
			D_{\mathrm{Glu}}\partial_{xx}u^0,
			&-R<x<R,\\[1mm]
			\partial_{\mathbf n}u^0
			+\overline\kappa u^0
			+\overline\rho\,\partial_tu^0
			=0,
			&x=\pm R,\\[1mm]
			u^0(0,x)=f(x).
		\end{cases}
	\end{equation}
	As before, the initial datum is understood in the mild sense.

	\subsection{Numerical parameters}
	\label{subsec:glutamate-interpretation}
	
	The numerical values are chosen on a scale motivated by extracellular
	glutamate clearance mechanism; this interpretation is used only to set representative
	length and time scales. We take
	\[
	D_{\mathrm{Glu}}
	=
	0.46\,\mu\mathrm m^2/\mathrm{ms},
	\qquad
	R=0.5\,\mu\mathrm m,
	\]
	with the diffusion coefficient taken from
	\cite{ZhengEtAl2017Diffusion}. Under our normalization of boundary local
	time,
	\[
	[\rho_i]=\mathrm{ms}/\mu\mathrm m,
	\qquad
	[\kappa_i]=\mu\mathrm m^{-1}.
	\]
	
	For the constant-coefficient model and the survival observable \(f=1\),
	write
	\[
	M_0(t)=u^0(t,0;1).
	\]
	Its mean lifetime is
	\begin{equation}
		\label{eq:numerical-mean-lifetime}
		\int_0^\infty M_0(t)\,dt
		=
		\frac{R^2}{2D_{\mathrm{Glu}}}
		+
		\frac{\overline\rho+R/D_{\mathrm{Glu}}}
		{\overline\kappa}.
	\end{equation}
	The principal decay rate is
	\[
	\lambda_1=D_{\mathrm{Glu}}k_1^2,
	\qquad
	k_1\tan(k_1R)
	=
	\overline\kappa
	-
	\overline\rho D_{\mathrm{Glu}}k_1^2.
	\]
	Using the control centroid \(4.1\pm0.5\,\mathrm{ms}\) from
	\cite{ScimemiEtAl2013} and a \(4\,\mathrm{ms}\) tail time motivated by
	\cite[Fig.~2D]{ArmbrusterDullaDiamond2020} gives
	\[
	\overline\rho
	=
	0.0735035\,\mathrm{ms}/\mu\mathrm m,
	\qquad
	\overline\kappa
	=
	0.303130\,\mu\mathrm m^{-1}.
	\]
	These values are used only as nominal parameters: the two time scales come
	from different settings and the inversion is ill-conditioned near the
	chosen values.
	
	We take the microscopic coefficients
	\[
	\rho_0=\kappa_0=0,
	\qquad
	\rho_1=\frac{\overline\rho}{0.7},
	\qquad
	\kappa_1=\frac{\overline\kappa}{0.7}.
	\]
	Their invariant averages are \(\overline\rho\) and \(\overline\kappa\), and
	\begin{equation}
		\label{eq:numerical-proportional-coefficients}
		\kappa_i
		=
		\frac{\overline\kappa}{\overline\rho}\rho_i,
		\qquad i\in\{0,1\}.
	\end{equation}

\subsection{Quantities compared}
\label{subsec:numerical-errors}

The annealed finite-switching observable is
\[
\overline u^\varepsilon(t,0;f)
=
\Esp^{W,\alpha}[Y_\varepsilon^f(t)],
\]
whereas
\[
u^0(t,0;f)=\Esp^W[Y_0^f(t)].
\]
This comparison concerns only one-time observables and does not require a
coupling.

To examine the quenched estimates, we use the common-reference coupling from the
proof of Theorem~\ref{thm:SE_homogenization_1}. On a physical observation
grid \(\mathcal T\subset[0,T]\), define
\begin{align}
	\mathcal E_{\varepsilon,\mathcal T}^{\mathrm{marg}}(f)
	&=
	\max_{t\in\mathcal T}
	\Esp^{W,\alpha}
	\left|
	Y_\varepsilon^f(t)-Y_0^f(t)
	\right|,
	\label{eq:numerical-marginal-error}\\
	\mathcal E_{\varepsilon,\mathcal T}^{\mathrm{sup}}(f)
	&=
	\Esp^{W,\alpha}
	\left[
	\max_{t\in\mathcal T}
	\left|
	Y_\varepsilon^f(t)-Y_0^f(t)
	\right|
	\right].
	\label{eq:numerical-uniform-error}
\end{align}
By conditional Jensen's inequality, these quantities dominate the
corresponding quenched errors on the observation grid. Indeed,
\begin{align}
	\max_{t\in\mathcal T}
	\Esp^\alpha
	\left|
	u_\alpha^\varepsilon(t,0;f)-u^0(t,0;f)
	\right|
	&\le
	\mathcal E_{\varepsilon,\mathcal T}^{\mathrm{marg}}(f),
	\label{eq:coupled-dominates-quenched-marginal}
	\\
	\Esp^\alpha
	\left[
	\max_{t\in\mathcal T}
	\left|
	u_\alpha^\varepsilon(t,0;f)-u^0(t,0;f)
	\right|
	\right]
	&\le
	\mathcal E_{\varepsilon,\mathcal T}^{\mathrm{sup}}(f).
	\label{eq:coupled-dominates-quenched-uniform}
\end{align}
Thus the coupled quantities are stronger pathwise diagnostics, rather than
the quantities defining the quenched homogenization errors themselves.

For a general bounded Lipschitz observable,
Theorem~\ref{thm:SE_homogenization_1} gives every rate
\(O(\varepsilon^\beta)\), \(0<\beta<1/8\), for the two quenched
errors on the left-hand sides above.

For the survival observable \(f=1\), however,
Theorem~\ref{thm:improved-smooth-observable} applies because
\(1\in C^2(\overline D)\), \(\partial_{\mathbf n}1=0\), and
\eqref{eq:numerical-proportional-coefficients} gives
\[
\kappa_i
=
\frac{\overline\kappa}{\overline\rho}\rho_i.
\]
Hence the corresponding quenched errors satisfy the sharper
\(O(\varepsilon^{1/4})\) fixed-time bound and
\(O(\varepsilon^{1/4}(1+|\log\varepsilon|))\) uniform-in-time bound.

The second quenched experiment below considers instead a generic bounded
Lipschitz observable and non-proportional microscopic coefficients, so that
only Theorem~\ref{thm:SE_homogenization_1} applies. This yields both fixed-time
and uniform-in-time quenched error bounds of order
\(O(\varepsilon^\beta)\) for every \(0<\beta<1/8\).

\subsection{Finite-switching and homogenized survival curves}

We first consider the survival observable \(f=1\), with the proportional
microscopic coefficients specified in
\eqref{eq:numerical-proportional-coefficients}. For
\[
\varepsilon\in\{1,1/2,1/4,1/8,1/16\},
\]
we simulate \(3500\) independent trajectories up to
\(T=12\,\mathrm{ms}\), using operational step
\[
\Delta s=2.5\times10^{-4}\,\mathrm{ms}
\]
and physical output spacing \(0.1\,\mathrm{ms}\).
The environment is initialized from \(\pi\) and advanced over each
operational step using the exact transition matrix
\(e^{Q\Delta s/\varepsilon}\). The reflected process, local time, clock,
and killing functional are discretized as in
Appendix~\ref{app:numerical-scheme}.

The homogenized equation
\eqref{eq:numerical-homogenized-pde} is solved independently by finite
differences on \(301\) spatial nodes with BDF time integration.
Figure~\ref{fig:calibrated-switching-survival} compares the resulting
annealed survival curves. Pointwise Monte Carlo intervals are computed as
the sample mean plus or minus \(1.96\) sample standard errors.

\begin{figure}[tb]
	\centering
	\includegraphics[width=0.7\textwidth]
	{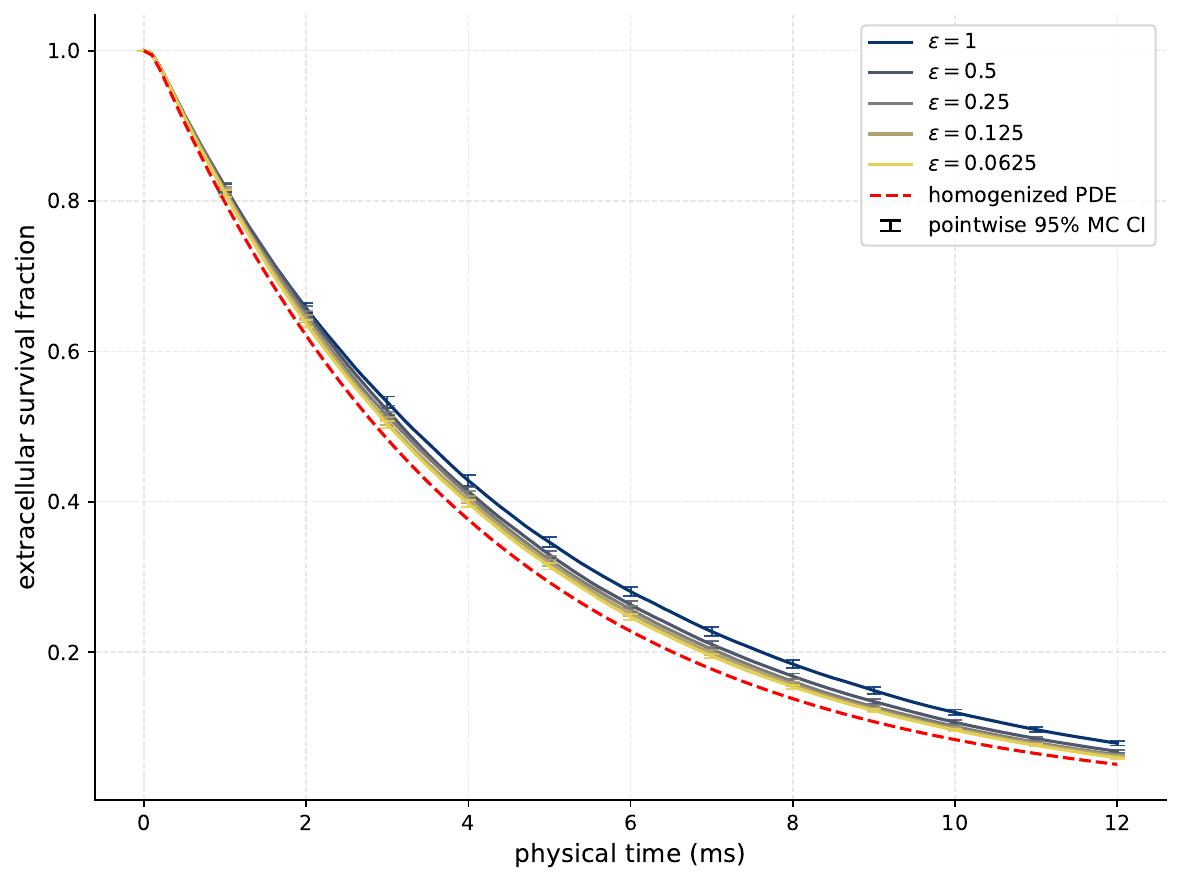}
	\caption{
		Annealed survival curves for the finite-switching model with proportional
		microscopic coefficients and for the homogenized Wentzell equation.
		Error bars show pointwise \(95\%\) Monte Carlo intervals. In the
		glutamate interpretation of
		Subsection~\ref{subsec:glutamate-interpretation}, survival is the
		fraction of molecules not yet irreversibly taken up.
	}
	\label{fig:calibrated-switching-survival}
\end{figure}

The finite-switching curves approach the homogenized evolution as the
environmental time scale becomes faster.

\subsection{Finite-scale behavior of the quenched errors}
\label{subsec:quenched-errors}

We also estimate the quenched errors themselves. For each fixed environment
\(\alpha\), the conditional difference
\[
u_\alpha^\varepsilon(t,0;f)-u^0(t,0;f)
=
\Esp^W
\left[
Y_\varepsilon^f(t)-Y_0^f(t)
\,\middle|\,\alpha
\right]
\]
is estimated by averaging over independent reflected Brownian trajectories.

\medskip
\noindent\emph{Survival observable and proportional coefficients.}
We first take \(f=1\) with the proportional microscopic coefficients
in~\eqref{eq:numerical-proportional-coefficients}. For each
\[
\varepsilon\in\{1/2,1/4,1/8,1/16,1/32\},
\]
we use \(640\) independent stationary environment paths and \(96\)
Brownian trajectories for each environment. The operational step is
\(2.5\times10^{-4}\,\mathrm{ms}\), and the output spacing is
\(0.1\,\mathrm{ms}\). Within each Brownian trajectory, the switching and
homogenized models use the same reflected increments and local-time
increments.

Uncertainty is assessed by a cluster bootstrap over the environment
paths, keeping together all Brownian trajectories associated with each
environment. We use \(5000\) bootstrap replicates; the same resamples are
used to obtain intervals for the error diagnostics and for the fitted
log--log slopes.

Over the range shown in Figure~\ref{fig:quenched-rate-test-structured}, the fitted
slopes are
\[
0.48
\qquad\text{and}\qquad
0.444,
\]
with \(95\%\) bootstrap intervals
\[
[0.451, 0.507]
\qquad\text{and}\qquad
[0.425, 0.463],
\]
for the marginal and uniform-in-time errors, respectively.

\begin{figure}[htbp]
	\centering
	
	\begin{subfigure}[t]{0.65\textwidth}
		\centering
		\includegraphics[width=\textwidth]{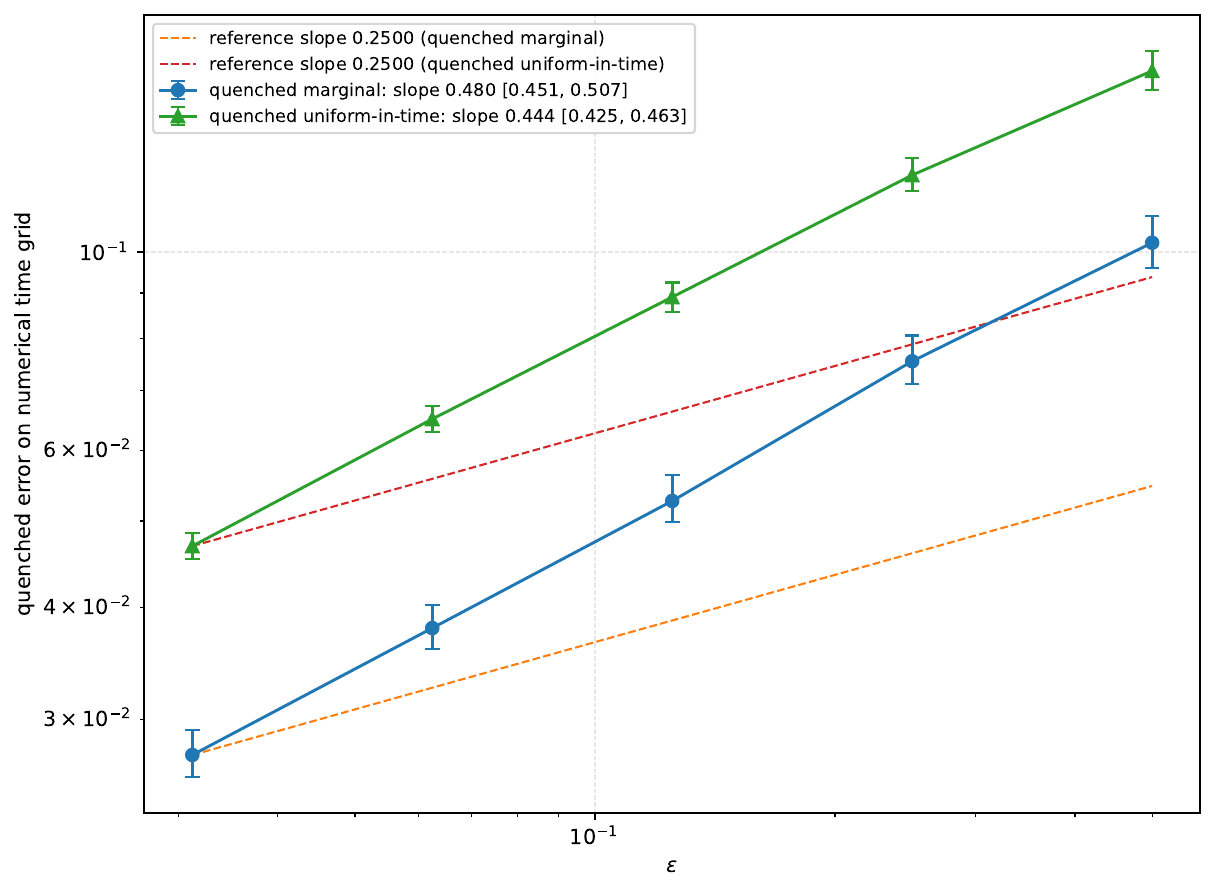}
		\caption{
			Survival observable \(f=1\) with proportional microscopic coefficients.
			The dashed reference lines have polynomial slope \(1/4\), corresponding
			to Theorem~\ref{thm:improved-smooth-observable}; the
			uniform-in-time bound also contains a logarithmic factor.
		}
		\label{fig:quenched-rate-test-structured}
	\end{subfigure}
	\\
	\begin{subfigure}[t]{0.65\textwidth}
		\centering
		\includegraphics[width=\textwidth]{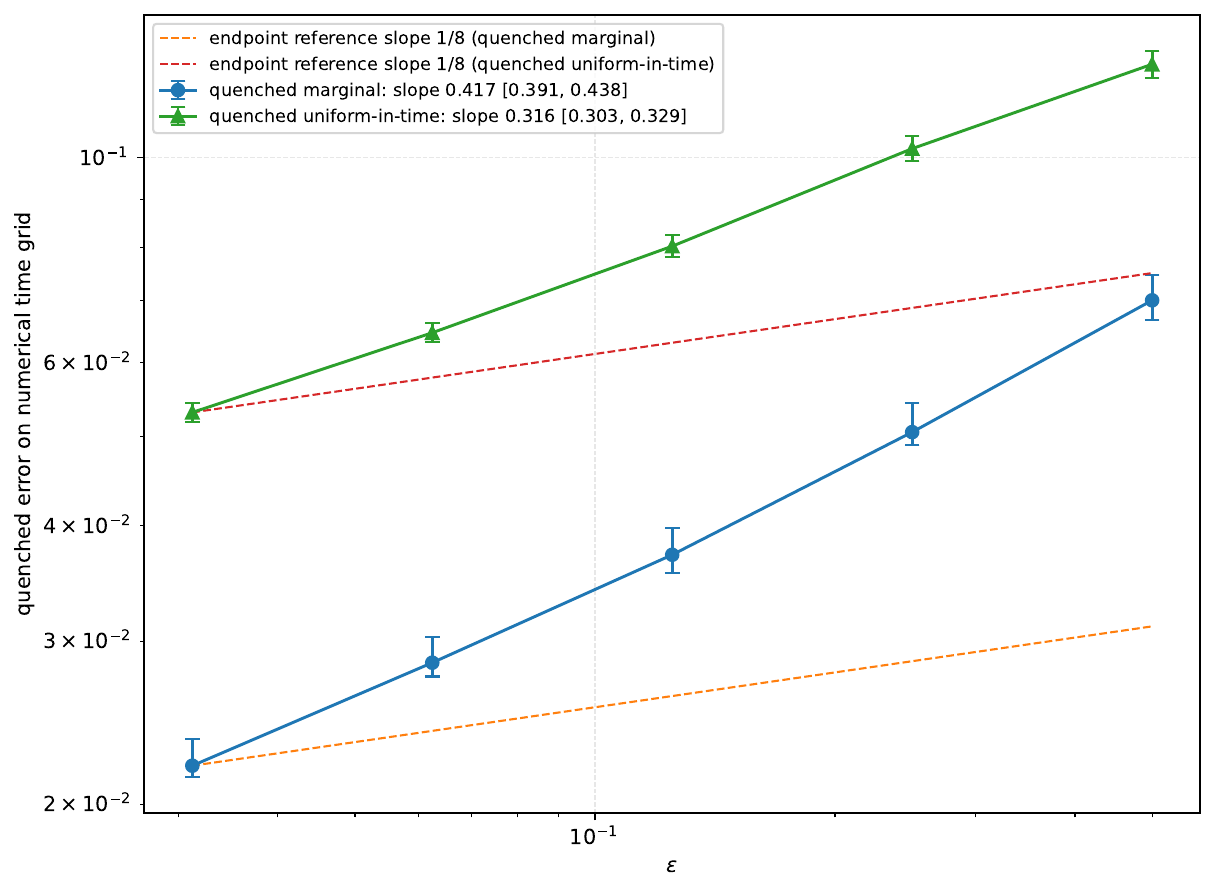}
		\caption{
			Generic Lipschitz observable
			\(f_{\mathrm{gen}}(x)=1+|x|/R\) with non-proportional microscopic
			coefficients. The dashed reference lines have slope \(1/8\), shown as
			the endpoint polynomial threshold associated with
			Theorem~\ref{thm:SE_homogenization_1}; the theorem itself gives every
			exponent strictly below \(1/8\).
		}
		\label{fig:quenched-rate-test-generic}
	\end{subfigure}
	
	\caption{
		Direct numerical estimates of the quenched marginal and
		uniform-in-time errors in two regimes. Panel~\textup{(a)} shows the
		structured case covered by
		Theorem~\ref{thm:improved-smooth-observable}; panel~\textup{(b)}
		shows a genuinely generic bounded Lipschitz case, for which only
		Theorem~\ref{thm:SE_homogenization_1} applies. Error bars show
		\(95\%\) bootstrap uncertainty intervals. In both panels, the fitted
		slopes describe only the finite range of switching scales shown and are
		not estimates of sharp asymptotic exponents.
	}
	\label{fig:quenched-rate-comparison}
\end{figure}

Over the simulated range, the observed decay is faster than the polynomial
scale appearing in
\eqref{eq:smooth-observable-fixed-rate}--%
\eqref{eq:smooth-observable-max-rate}. This is not in conflict with
the theory, which gives upper bounds rather than asymptotic equivalences.
The five switching scales cover only a limited range, and the logarithmic
factor in the uniform estimate cannot be separated numerically from an
effective power law over that range.

\medskip
\noindent\emph{Generic Lipschitz observable and non-proportional coefficients.}
To complement the preceding experiment, we next consider a setting in which
the assumptions leading to the improved rate of
Theorem~\ref{thm:improved-smooth-observable} are absent. We take
\[
f_{\mathrm{gen}}(x):=1+\frac{|x|}{R},
\qquad x\in[-R,R].
\]
This observable is bounded and Lipschitz, but it is not \(C^2\), and it does
not satisfy the Neumann compatibility condition; indeed,
\[
\partial_{\mathbf n}f_{\mathrm{gen}}(\pm R)=\frac1R.
\]

We retain the same invariant averages
\(\overline\rho,\overline\kappa\), but replace the microscopic coefficients by
\[
\rho_0^{\mathrm{gen}}=0,
\qquad
\rho_1^{\mathrm{gen}}=\frac{\overline\rho}{0.7},
\]
and
\[
\kappa_0^{\mathrm{gen}}=\frac12\overline\kappa,
\qquad
\kappa_1^{\mathrm{gen}}=\frac{17}{14}\overline\kappa.
\]
Since \(\pi=(0.3,0.7)\),
\[
\sum_{i=0}^1 \pi_i \rho_i^{\mathrm{gen}}=\overline\rho,
\qquad
\sum_{i=0}^1 \pi_i \kappa_i^{\mathrm{gen}}=\overline\kappa,
\]
so the homogenized coefficients are unchanged. On the other hand, the
microscopic killing and stickiness coefficients are not proportional, since
\(\rho_0^{\mathrm{gen}}=0\) while \(\kappa_0^{\mathrm{gen}}>0\).

Thus Theorem~\ref{thm:improved-smooth-observable} does not apply, and
the relevant theoretical comparison is the general estimate of
Theorem~\ref{thm:SE_homogenization_1}, which gives both fixed-time and
uniform-in-time quenched error bounds of order
\(O(\varepsilon^\beta)\) for every \(0<\beta<1/8\).

We use the same switching scales, number of stationary environment paths,
number of Brownian trajectories per environment, operational step, output
spacing, and cluster-bootstrap procedure as in the preceding quenched
experiment. Over the range shown in
Figure~\ref{fig:quenched-rate-test-generic}, the fitted slopes are
\[
0.417
\qquad\text{and}\qquad
0.316,
\]
with \(95\%\) bootstrap intervals
\[
[0.391,0.438]
\qquad\text{and}\qquad
[0.303,0.329],
\]
for the marginal and uniform-in-time errors, respectively.

The observed finite-range slopes need not coincide with \(1/8\).
Theorem~\ref{thm:SE_homogenization_1} provides upper bounds rather than
asymptotic equivalences, and does not assert that the threshold \(1/8\) is
optimal. The purpose of this experiment is instead to test the homogenization
behavior in a genuinely generic regime, without the smoothness, Neumann
compatibility, or proportionality assumptions responsible for the improved
rate above.

\medskip
\noindent\emph{Mesh sensitivity.}
To separate the operational-time discretization error from the
\(\varepsilon\)-dependence studied above, we repeated both quenched rate
experiments, for every reported value of \(\varepsilon\), with operational
steps \(h\), \(h/2\), and \(h/4\), where
\(h=2.5\times10^{-4}\,\mathrm{ms}\).
The three discretizations were coupled using the same exact continuous-time
environment path and nested Brownian increments. The fitted log--log slopes
were stable under mesh refinement, and the discrepancies between the two
finest meshes were small relative to the Monte Carlo uncertainty.
Thus, over the range of \(\varepsilon\) considered here, the observed slopes
are not driven by the operational-time mesh. This mesh-sensitivity study also
provides an empirical check of the approximation scheme described in
Appendix~\ref{app:numerical-scheme}, complementing the fixed-\(\varepsilon\)
consistency result proved there.

As an additional implementation check, we also repeated the simulation with
both regimes equal to the homogenized coefficients. In that case the coupled
discrepancy vanishes, as expected.
	
	\subsection{Reproducibility.}
	The code used for the numerical experiments is publicly available at
	\[
	\text{\url{https://gitlab.com/alexganagn/glutamate-clearance}}.
	\]
	The repository contains the routines used to generate the survival curves
	and the direct quenched-error experiment, together with the corresponding
	random seeds, simulation parameters, and automated tests.
	The survival-curve simulations use seeds \(700\)--\(704\), while the direct
	quenched experiment uses base seed \(91021\).
	Appendix~\ref{app:numerical-scheme} justifies the simulation approach and
	describes the discretization of reflection, boundary local time, the random
	clock, and the killing functional.

	\appendix
	
	\section{H\"older regularity of the process and its boundary local time}
	\label{app:Holder_continuity}
	
	For $\theta<1/2$ and $T>0$, recall the seminorms
	\begin{equation}
		\label{eq:holder-norms-app}
		[W^+]_{\theta,T}
		:=
		\sup_{0\le r<s\le T}
		\frac{|W^+_s-W^+_r|}{|s-r|^\theta},
		\qquad
		[L]_{\theta,T}
		:=
		\sup_{0\le r<s\le T}
		\frac{|L_s-L_r|}{|s-r|^\theta}.
	\end{equation}
	
	\begin{proposition}
		\label{prop:Holder}
		For every \(\theta\in(0,1/2)\), \(q\ge1\), and \(T>0\), there exists a
		constant \(C(D,\theta,q,T)>0\) such that
		\begin{equation}
			\label{eq:holder-moment-assumption}
			\sup_{x\in\overline D} \Esp_x\left[
			[W^+]_{\theta,T}^q + [L]_{\theta,T}^q
			\right]
			\le C(D,\theta,q,T).
		\end{equation}
	\end{proposition}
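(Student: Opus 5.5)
The plan is to apply the Kolmogorov--Chentsov continuity criterion, in its Garsia--Rodemich--Rumsey form, separately to \(W^+\) and to \(L\). In each case I need increment moment bounds that are uniform in the starting point. The key estimate to aim for is
\[
\sup_{x\in\overline D}\Esp_x\bigl[|W^+_v-W^+_u|^m+|L_v-L_u|^m\bigr]\le C_{T,m}(v-u)^{m/2},
\qquad 0\le u<v\le T,
\]
for every integer \(m\ge2\). The Garsia--Rodemich--Rumsey inequality, applied with \(\Psi(z)=z^m\) and \(p(s)=s^{\theta+2/m}\), then gives
\[
[X]_{\theta,T}^m\le C\int_0^T\!\!\int_0^T\frac{|X_s-X_r|^m}{|s-r|^{m\theta+2}}\,dr\,ds.
\]
Taking expectations, the right-hand side is finite as soon as \(m/2-m\theta-2>-1\), that is, \(m(1/2-\theta)>1\). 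For a given \(\theta<1/2\) I choose \(m\) this large and also \(m\ge q\). Lyapunov's inequality then yields the bound for every \(q\ge1\), and the constant depends only on \(D,\theta,q,T\) because the increment bounds are uniform in \(x\).

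\textbf{Local time.} The increment bound for \(L\) is exactly the conclusion recorded in Remark~\ref{rem:abstract-boundary-local-time}. That remark derives it from Lemma~\ref{lem:AF-conditioning}, \eqref{eq:abstract-A-increments}, with \(\vartheta=1/2\), once one has the short-time estimate
\[
\sup_{x\in\overline D}\Esp_x L_t\le Ct^{1/2},\qquad 0<t\le T.
\]
I would prove this estimate directly; it is the step I expect to be the main obstacle, because uniformity up to the boundary is needed. I would use the smooth distance function \(r\in C^2(\overline D)\) with \(\nabla r\cdot\mathbf n=-1\) on \(\partial D\), as in the proof of Lemma~\ref{lem:reflected-start-continuity}. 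It\^o's formula gives
\[
L_t=r(W_t^+)-r(x)-\int_0^t\nabla r(W_s^+)\cdot dB_s-\frac12\int_0^t\Delta r(W_s^+)\,ds.
\]
The last term is \(O(t)\), and the martingale term has \(L^1\) norm \(O(t^{1/2})\) by BDG, since \(\nabla r\) is bounded. For the first term, \(|r(W_t^+)-r(x)|\le\|\nabla r\|_\infty|W_t^+-x|\). So it suffices to show \(\Esp_x\sup_{s\le t}|W^+_s-x|\le Ct^{1/2}\), which is exactly the path estimate below. The argument is therefore circular only superficially, and I would organise it as a joint bootstrap.

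\textbf{Reflected path.} From the Skorokhod representation,
\[
W_v^+-W_u^+=(B_v-B_u)-\int_u^v\mathbf n(W_s^+)\,dL_s,
\quad\text{so}\quad
|W_v^+-W_u^+|\le|B_v-B_u|+(L_v-L_u).
\]
Hence the \(W^+\)-increment bound follows from Gaussian moments together with the \(L\)-increment bound. To break the circularity I would prove the short-time \(L\) bound first, by a route that does not pass through \(W^+\). The first option is the heat-kernel route: by the Revuz correspondence, \(\Esp_xL_t=\tfrac12\int_0^t\int_{\partial D}p^{W^+}(s,x,\xi)\,\sigma(d\xi)\,ds\). The Gaussian upper bound \(p^{W^+}(s,x,y)\le Cs^{-d/2}e^{-c|x-y|^2/s}\) for the Neumann heat kernel on a bounded \(C^2\) domain, for \(s\le T\), makes the surface integral at most \(Cs^{-1/2}\) uniformly in \(x\), because \(\partial D\) is a \(C^2\) hypersurface. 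Integrating in \(s\) gives \(Ct^{1/2}\). As a fallback, avoiding kernels, I would use the It\^o identity above with \(r\) replaced by a bounded function. A Gronwall/localisation argument on \(\sup_{s\le t}|W_s^+-x|\) would then work, using \(\Esp_x\sup_{s\le t}|W_s^+-x|^2\le 2\Esp|B|_t^{*2}+2\Esp L_t^2\) and the Kac moment bound \(\Esp L_t^2\le2M(t)^2\). The smallness would come from restricting to \(t\le t_0\) and then extending to \([0,T]\) by the Markov property and subadditivity of \(M\).

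Once the uniform bound \(M(t)\le Ct^{1/2}\) is in hand, the rest is routine: the moment bounds of Lemma~\ref{lem:AF-conditioning}, the Gaussian bound for \(B\), and the Garsia--Rodemich--Rumsey estimate described at the start.
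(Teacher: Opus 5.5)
Your proof is correct, but it runs in the opposite direction from the paper's.

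\textbf{The paper's route.} It starts from the Gaussian upper bound on the Neumann heat kernel and, via the Markov property, obtains $\Esp_x|W^+_t-W^+_s|^a\le C|t-s|^{a/2}$ directly for $W^+$. It then gets the local-time increments from It\^o--Tanaka applied to $R=r(W^+)$. There $L_t-L_s$ splits into three pieces: an $R$-increment, controlled by $\mathrm{Lip}(r)$ and the $W^+$ bound; a martingale increment, controlled by BDG; and an $O(t-s)$ drift. Kolmogorov finishes the argument.

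\textbf{Your route.} You obtain $L$ first, from the Revuz formula, the surface estimate $\sigma(\partial D\cap B(x,\delta))\le C\delta^{d-1}$ (this is Lemma~\ref{lem:short-time-growth-local-time}), and the Kac moment bounds of Lemma~\ref{lem:AF-conditioning}. You then deduce the $W^+$ increments from $|W^+_v-W^+_u|\le|B_v-B_u|+(L_v-L_u)$.

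So the circularity you noticed is broken by a heat-kernel bound on $W^+$ in the paper, and on $L$ in your version. Your route reuses machinery the paper needs anyway for the averaging theorem, and it makes the $W^+$ estimate a one-line consequence. The paper's route avoids the Revuz correspondence and all boundary geometry beyond the existence of $r$. Using GRR instead of Kolmogorov in moment form makes no difference.

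\textbf{The fallback does not close.} As sketched, you have $M(t)\le\|\nabla r\|_\infty\sup_x\Esp_x|W^+_t-x|+Ct^{1/2}$ and $\Esp_x|W^+_t-x|\le Ct^{1/2}+2M(t)$. Together these give $M(t)\le Ct^{1/2}+2\|\nabla r\|_\infty M(t)$ with $\|\nabla r\|_\infty\ge1$. Restricting to $t\le t_0$ does not shrink that coefficient, so nothing can be absorbed.

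The kernel-free fix is the one-dimensional Skorokhod lemma. Write $R_t=r(x)+N_t+V_t+L_t\ge0$, where $N_t=\int_0^t\nabla r(W^+_s)\cdot dB_s$ and $V_t=\frac12\int_0^t\Delta r(W^+_s)\,ds$. Since $L$ increases only on $\{R=0\}$, the lemma gives $L_t=\sup_{s\le t}\bigl(-(r(x)+N_s+V_s)\bigr)^+\le\sup_{s\le t}|N_s+V_s|$. BDG then yields $\sup_x\Esp_xL_t^m\le C_{m,T}t^{m/2}$. Combined with your Skorokhod-decomposition step for $W^+$, this proves the proposition with no heat-kernel input at all.
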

	
	Fix a smooth distance function \(r\in C^2(\overline D)\) which agrees with
	\(\dist(\cdot,\partial D)\) in a collar
	\[
	D_{r_0}:=\{x\in D:\dist(x,\partial D)<r_0\}
	\]
	of the boundary, for some \(r_0>0\); see
	\cite[Lemma~14.16]{gilbarg2001_elliptic_PDEs} and
	\cite[Corollary~1.2]{lieberman1985_regularized_distance}.
	Moreover,
	\[
	r>0 \quad\text{in }D,
	\qquad
	r=0 \quad\text{on }\partial D.
	\]
	Since \(\overline D\) is compact and \(r\in C^2(\overline D)\),
	\(\|\nabla r\|_\infty\) and \(\|\Delta r\|_\infty\) are finite.
	
	\begin{proof}
		\emph{Step 1: bound for \(W^{+}\).}
		We first prove that, for \(\theta<1/2\) and \(q\ge2\), there exists a
		constant \(C_{D,T,\theta,q}>0\), independent of the starting position \(x\),
		such that
		\begin{equation}
			\label{eq:Holder_bound_Wp}
			\sup_{x\in\overline D}
			\Esp_x\left[[W^+]_{\theta,T}^q\right]
			\le C_{D,T,\theta,q}<\infty.
		\end{equation}
		
		The reflected heat kernel satisfies the Gaussian bound
		\cite[Theorem~3.1 and Remark~3.11]{bass1991_some_potential_theory}:
		\begin{equation}
			\label{eq:neumann-heat-kernel-upper-bound}
			p^{W^+}(u,y,z)
			\le
			C^{(1)}_{D,T} u^{-d/2}
			\exp\left(
			-\frac{|z-y|^2}{C^{(2)}_D u}
			\right),
			\qquad
			0<u\le T,
			\quad y,z\in\overline D.
		\end{equation}
		
		The Markov property and
		\eqref{eq:neumann-heat-kernel-upper-bound} give, for
		\(0\le s<t\le T\) and \(a\ge2\),
		\begin{align}
			\Esp_x\left[
			|W_t^+-W_s^+|^a
			\right]
			&= \Esp_x\left[ \Esp \sqbraces{ 
				|W_t^+-W_s^+|^a \mid \mc F_{s} }
			\right]
			\\
			&=
			\Esp_x\left[
			\int_D
			|z-W_s^+|^a
			p^{W^+}(t-s,W_s^+,z)\,dz
			\right]
			\nonumber\\
			&\le
			C^{(1)}_{D,T}
			\Esp_x\left[
			\int_{\mathbb R^d}
			|\xi-W_s^+|^a
			(t-s)^{-d/2}
			\exp\left(
			-\frac{|\xi-W_s^+|^2}{C^{(2)}_D(t-s)}
			\right)\,d\xi
			\right]
			\nonumber\\
			&=
			C^{(1)}_{D,T}
			\Esp_x\left[
			\int_{\mathbb R^d}
			|\zeta|^a
			(t-s)^{-d/2}
			\exp\left(
			-\frac{|\zeta|^2}{C^{(2)}_D(t-s)}
			\right)\,d\zeta
			\right]
			\nonumber\\
			&=
			C^{(1)}_{D,T}
			\int_{\mathbb R^d}
			|\zeta|^a
			(t-s)^{-d/2}
			\exp\left(
			-\frac{|\zeta|^2}{C^{(2)}_D(t-s)}
			\right)\,d\zeta
			\nonumber\\
			&=
			C_{a,D,T}|t-s|^{a/2},
			\label{eq:reflected-bm-increment-moment}
		\end{align}
		where in the last step we used the change of variables
		\(\zeta=\sqrt{t-s}\,\eta\) and absorbed the resulting Gaussian moment
		into the constant \(C_{a,D,T}\).
		
		Choose now \(a>q\) so large that
		\[
		\theta < \frac12 - \frac1a.
		\]
		Kolmogorov's theorem in moment form
		\cite[Theorem~A.10]{friz2010_book}, with constants uniform in the starting
		point, gives
		\[
		\Esp_x\left[[W^+]_{\theta,T}^q\right]
		\le C_{D,T,\theta,q}<\infty,
		\]
		which proves \eqref{eq:Holder_bound_Wp}.
		
		We turn to the regularity of the regulator \(L\).
		
		\emph{Step 2: smooth distance local time.}
		Let \(R_t=r(W_t^+)\). We first relate \(L\) to the right local time of \(R\).
		With our normalization,
		\begin{equation}
			\label{eq:sloctime_limit}
			L_t = \lim_{\varepsilon \to 0} \frac{1}{2\varepsilon}
			\int_{0}^{t}
			\indic{\dist(W^+_s,\, \partial D)< \varepsilon} \vd s. 
		\end{equation}
		The reflected SDE, with outward normal, is
		\cite{bass2005_uniqueness_for_reflecting}
		\begin{equation}
			\rd W^+_t = \rd W_t - \mathbf n(W^+_t) \vd L_t,
		\end{equation}
		where \(W\) is a \(d\)-dimensional Brownian motion and \(\mathbf n\) is the
		outward unit normal vector on \(\partial D\).
		It\^o's formula gives
		\begin{align}
			\rd R_t
			&= \nabla r(W^+_t) \cdot \vd W^+_t
			+ \frac{1}{2} \sum_{i,j=1}^{d}
			\partial_i \partial_j r(W^+_t)
			\vd \langle (W^+)^i, (W^+)^j\rangle_t \\
			&= \nabla r(W^+_t) \cdot \vd W_t
			- \nabla r(W^+_t) \cdot \mbf n(W^+_t) \vd L_t
			+ \frac{1}{2} \Delta r(W^+_t) \vd t,
		\end{align}
		where we used that the finite-variation reflection term does not contribute
		to quadratic variation, and hence
		\[
		\rd\langle (W^+)^i,(W^+)^j\rangle_t
		=
		\delta_{ij}\,\rd t.
		\]
		On the boundary, \(r\) is the inward distance, hence
		\[
		\nabla r(x) = -\mbf n(x),
		\qquad
		\nabla r(x)\cdot \mbf n(x) = -1.
		\]
		Since \(dL_t\) is supported on
		\(\{W^+_t\in\partial D\}=\{R_t=0\}\), we obtain
		\begin{equation}
			R_t = R_0
			+ \int_{0}^{t} \nabla r(W^+_u) \cdot \vd W_u
			+ L_t
			+ \frac{1}{2} \int_{0}^{t} \Delta r(W^+_u) \vd u.
		\end{equation}
		Its quadratic variation is
		\begin{equation}
			\langle R \rangle_t
			=
			\sum_{i=1}^{d} \int_{0}^{t}
			\sqbraces{ \partial_{i} r(W^+_u) }^{2} \vd u.
		\end{equation}
		
		Since \(R\ge0\), Tanaka's formula
		\cite[Theorem~VI.1.2]{revuz1999Continuous_martingales} gives
		\begin{equation}
			R_t - R_0
			=
			\int_{0}^{t} \indicB{R_u>0} \vd R_u
			+ \frac{1}{2} \loct{R}{0}{t},
		\end{equation}
		where \(\loct{R}{0}{}\) is the right local time of \(R\).
		Comparison with the preceding decomposition yields
		\begin{equation}
			\int_{0}^{t}
			\indicB{R_u = 0} \nabla r(W^+_u) \cdot \vd W_u
			+ L_t
			+ \frac{1}{2} \int_{0}^{t}
			\indicB{R_u = 0} \Delta r(W^+_u) \vd u
			=
			\frac{1}{2} \loct{R}{0}{t}.
		\end{equation}
		
		Since \(\partial D\) has zero Lebesgue measure and \(W_u^+\) admits a
		transition density for every \(u>0\),
		\[
		\Esp_x\left[
		\int_0^T \indicB{W_u^+\in\partial D}\,\vd u
		\right]
		=
		\int_0^T
		\Pr_x(W_u^+\in\partial D)\,\vd u
		=0.
		\]
		Hence
		\[
		\int_0^T \indicB{R_u=0}\,\vd u=0
		\qquad\text{a.s.}
		\]
		The boundary-supported martingale therefore has zero quadratic variation
		and hence vanishes
		\cite[Proposition~IV.1.13]{revuz1999Continuous_martingales}:
		\begin{equation}
			\left\langle
			\int_{0}^{\cdot}
			\indicB{R_u = 0} \nabla r(W^+_u) \cdot \vd W_u
			\right\rangle_t
			=
			\int_0^t \indicB{R_u=0}\,
			|\nabla r(W^+_u)|^2\,\vd u
			=0.
		\end{equation}
		Likewise,
		\[
		\int_0^t \indicB{R_u=0}\Delta r(W^+_u)\,\vd u=0.
		\]
		Thus, with our normalization,
		\begin{equation}
			\label{eq:loctime_relation}
			L_t=\frac12\loct{R}{0}{t}.
		\end{equation}
		This factor \(1/2\) is precisely the convention fixed in
		Section~\ref{ssec:notation}; compare
		\cite[Section~VI.1]{revuz1999Continuous_martingales}.
		
		\medskip\noindent\emph{Step 3: bound for \(L\).}
		By Tanaka's formula applied to the nonnegative continuous semimartingale \(R\),
		and using the identity~\eqref{eq:loctime_relation}, for \(0\le s<t\le T\),
		\begin{align}
			L_t - L_s
			&=
			\braces{R_t - R_s}
			-
			\int_s^t \indicB{R_u>0}\,\vd R_u \\
			&=
			\braces{R_t - R_s}
			-
			\braces{M_t-M_s}
			-
			\braces{A_t-A_s},
		\end{align}
		where
		\[
		M_t := \int_{0}^{t}
		\indicB{R_u > 0} \nabla r(W^+_u) \cdot \vd W_u,
		\qquad
		A_t := \frac{1}{2} \int_{0}^{t}
		\indicB{R_u > 0} \Delta r(W^+_u) \vd u.
		\]
		Here we used that \(dL_u\) is supported on \(\{R_u=0\}\), so the reflection
		term vanishes after multiplication by \(\indicB{R_u>0}\).
		
		By convexity,
		\begin{equation}
			\Esp_x \sqbraces{|L_t - L_s|^{q}}
			\le
			3^{q-1}
			\braces{
				\Esp_x \sqbraces{|R_t - R_s|^{q}}
				+ \Esp_x \sqbraces{|M_t - M_s|^{q}}
				+ \Esp_x \sqbraces{|A_t - A_s|^{q}}
			}.
		\end{equation}
		
		Since \(r\) is Lipschitz on \(\overline D\),
		\begin{equation}
			|R_t - R_s|
			=
			|r(W^+_t) - r(W^+_s)|
			\le
			\mathrm{Lip}(r)\,|W^{+}_t - W^{+}_s|.
		\end{equation}
		Therefore, by Step~1,
		\begin{equation}
			\Esp_x \sqbraces{|R_t - R_s|^{q}}
			\le
			\mathrm{Lip}(r)^q C_{q,D,T} |t-s|^{q/2}.
		\end{equation}
		
		For the martingale term, BDG gives
		(see, e.g., \cite[Section~IV.4]{revuz1999Continuous_martingales})
		\begin{align}
			\Esp_x \sqbraces{|M_t - M_s|^{q}}
			&\le C_{q}
			\Esp_x \sqbraces{
				|\qv{M}_t - \qv{M}_s|^{q/2}} \\
			&= C_{q} \Esp_x \sqbraces{
				\abs{
					\int_{s}^{t}
					\indicB{R_u > 0}
					|\nabla r(W^+_u)|^{2}
					\vd u}^{q/2}} \\
			&\le C_q \xnorm{\nabla r}{\infty}^{q} |t-s|^{q/2}.
		\end{align}
		
		For the finite-variation term,
		\begin{align}
			\Esp_x \sqbraces{|A_t - A_s|^{q}}
			&\le
			\frac{1}{2^q}
			\xnorm{\Delta r}{\infty}^{q}
			|t-s|^{q} \\
			&\le
			\frac{1}{2^q}
			\xnorm{\Delta r}{\infty}^{q}
			T^{q/2}
			|t-s|^{q/2}.
		\end{align}
		
		Combining the three bounds gives
		\begin{equation}
			\Esp_x \sqbraces{ |L_t - L_s|^{q} }
			\le
			C_{q,D,T}|t-s|^{q/2},
			\qquad 0\le s<t\le T.
		\end{equation}
		The same argument applies with \(q\) replaced by any \(a\ge2\). Hence
		\begin{equation}
			\label{eq:local-time-increment-moment}
			\sup_{x\in\overline D}
			\Esp_x\sqbraces{|L_t-L_s|^a}
			\le
			C_{a,D,T}|t-s|^{a/2},
			\qquad 0\le s<t\le T.
		\end{equation}
		
		Now fix \(\theta<1/2\) and \(q\ge2\), and choose \(a>q\) large enough so that
		\[
		\theta < \frac12-\frac1a.
		\]
		Kolmogorov's theorem in moment form then gives
		\[
		\sup_{x\in\overline D}
		\Esp_x\left[[L]_{\theta,T}^q\right]
		\le C_{D,T,\theta,q}<\infty.
		\]
		
		It remains to treat \(1\le q<2\). By Jensen's inequality,
		\[
		\Esp_x[Z^q]
		=
		\Esp_x[(Z^2)^{q/2}]
		\le
		\Esp_x[Z^2]^{q/2}.
		\]
		Applying this with \(Z=[W^+]_{\theta,T}\) and
		\(Z=[L]_{\theta,T}\) extends the preceding \(q=2\) estimate to all
		\(1\le q<2\).
		This completes the proof.
	\end{proof}
	
	\section{Boundary local time and verification of the abstract assumptions}
	\label{app:averaging}
	
	In this section, we verify that the reflected Brownian boundary local time satisfies the
	abstract short-time hypothesis~\eqref{eq:abstract-A-first} for $\vartheta = 1/2 $. 
	Once this is done, Proposition~\ref{prop:LTA}
	is an immediate specialization of Section~\ref{sec:abstract-averaging}.
	
	\subsection{Preliminary estimates}
	
	The needed geometric input is the familiar square-root growth of boundary
	local time.
	
	\begin{lemma}[Short-time growth of boundary local time]
		\label{lem:short-time-growth-local-time}
		Assume the notations of Section~\ref{ssec:notation}.
		Then, for every \(T>0\), there exists a constant
		\(C_{D,T}<\infty\) such that
		\begin{equation}
			\sup_{x\in\overline D}\Esp_x[L_t]
			\le C_{D,T} t^{1/2},
			\qquad 0<t\le T.
		\end{equation}
		Consequently, for every \(c>0\), there exists
		\(C_{D,T,c}<\infty\) such that, for every
		\(\varepsilon\in(0,1]\),
		\[
		\sup_{\xi\in\partial D}
		\Esp_\xi\left[
		\int_0^T e^{-cu/\varepsilon}\,dL_u
		\right]
		\le C_{D,T,c}\varepsilon^{1/2}.
		\]
	\end{lemma}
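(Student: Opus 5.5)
The plan is to derive the bound on \(\Esp_x[L_t]\) from the semimartingale decomposition of \(R_t=r(W^+_t)\) used in Appendix~\ref{app:Holder_continuity}, and then to obtain the discounted bound by integrating by parts, as in Step~1 of Theorem~\ref{thm:abstract-averaging}.

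\emph{First claim.} Let \(r\in C^2(\overline D)\) be the smooth distance function fixed in Appendix~\ref{app:Holder_continuity}, so that \(\nabla r\cdot\mathbf n=-1\) on \(\partial D\). It\^o's formula gives, under \(\Prb_x\),
\[
L_t
=
r(W^+_t)-r(x)
-\int_0^t\nabla r(W^+_u)\cdot dB_u
-\frac12\int_0^t\Delta r(W^+_u)\,du .
\]
Taking expectations removes the martingale term, because \(\nabla r\) is bounded. Hence
\[
\Esp_x[L_t]\le \mathrm{Lip}(r)\,\Esp_x|W^+_t-x|+\tfrac12\|\Delta r\|_\infty\, t .
\]
The increment moment bound \eqref{eq:reflected-bm-increment-moment}, with \(s=0\), gives
\[
\Esp_x|W^+_t-x|\le \bigl(\Esp_x|W^+_t-x|^2\bigr)^{1/2}\le C_{D,T}\,t^{1/2},
\]
uniformly in \(x\in\overline D\). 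Since \(t\le T^{1/2}t^{1/2}\) on \((0,T]\), the drift term is absorbed, and this proves \(\sup_x\Esp_x L_t\le C_{D,T}t^{1/2}\).

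\emph{Second claim.} Set \(M(s):=\sup_y\Esp_yL_s\). Fubini together with \(e^{-cu/\varepsilon}=e^{-cT/\varepsilon}+\int_u^T\frac{c}{\varepsilon}e^{-cs/\varepsilon}\,ds\) gives
\[
\Esp_\xi\int_0^Te^{-cu/\varepsilon}\,dL_u
\le e^{-cT/\varepsilon}M(T)+\frac c\varepsilon\int_0^Te^{-cs/\varepsilon}M(s)\,ds .
\]
Inserting \(M(s)\le C s^{1/2}\) and substituting \(s=\varepsilon v\), the integral term is bounded by
\[
C\,\varepsilon^{1/2}\int_0^\infty c\,e^{-cv}v^{1/2}\,dv=C_c\,\varepsilon^{1/2}.
\]
The first term is at most \(C\,T^{1/2}e^{-cT/\varepsilon}\le C_{T,c}\,\varepsilon^{1/2}\) for \(\varepsilon\in(0,1]\). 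This argument in fact holds with the supremum over all starting points, not only over \(\xi\in\partial D\).

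\emph{Main obstacle.} The delicate point is the uniformity in \(x\) of the first step. It rests on the Gaussian heat-kernel bound \eqref{eq:neumann-heat-kernel-upper-bound} for the reflected kernel, which is used in \eqref{eq:reflected-bm-increment-moment}. One also needs the martingale term to be a true martingale, which holds because its integrand is bounded. If one wants to avoid the heat-kernel input, an alternative is to use the Revuz formula \(\Esp_xL_t=\frac12\int_0^t\int_{\partial D}p^{W^+}(s,x,z)\,\sigma(dz)\,ds\). One then bounds the surface integral of the Gaussian by \(C s^{-1/2}\), using a \(C^2\) boundary flattening, and integrates to get \(C t^{1/2}\).
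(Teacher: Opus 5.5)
Your proof is correct, but it takes a different route from the paper's. The paper works at the level of densities. It identifies $L$ as the PCAF with Revuz measure $\frac12\sigma$, via a cited result fixing the constant, and writes $\Esp_x[L_t]=\frac12\int_0^t\int_{\partial D}p^{W^+}(s,x,\xi)\,d\sigma(\xi)\,ds$. It then combines the Gaussian kernel bound with the surface estimate $\sigma(\partial D\cap B(x,r))\le C_Dr^{d-1}$, summed over dyadic annuli, to obtain $\int_{\partial D}p^{W^+}(s,x,\xi)\,d\sigma(\xi)\le Cs^{-1/2}$. The discounted bound comes from integrating this kernel bound against $e^{-cu/\varepsilon}$, which gives $\Gamma(1/2)(\varepsilon/c)^{1/2}$.

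You instead use the It\^o decomposition of $r(W^+)$. This reduces $\Esp_xL_t$ to the mean displacement $\Esp_x|W^+_t-x|$ plus an $O(t)$ drift term. You then apply the integration-by-parts step of \eqref{eq:abstract-weighted-potential}, which needs only the integrated bound $M(s)\le Cs^{1/2}$.

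Both routes rest on the same Gaussian upper bound for the reflected heat kernel. Yours avoids the Revuz identification entirely, since the factor $\frac12$ is built into the SDE $dW^+=dB-\mathbf n\,dL$, and it needs no surface geometry. Your first step is in fact the first-moment case of the increment bound \eqref{eq:local-time-increment-moment} already proved in Appendix~A. So the first claim could be read off from Proposition~\ref{prop:Holder} and Jensen's inequality, with no new input.

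The paper's route gives something sharper: a bound on the boundary-integrated kernel that is pointwise in time. This is the kind of kernel-level information used in the sharpness computation of Proposition~\ref{prop:boundary-LT-sharpness}, and it yields the discounted estimate directly, without passing through $M$.
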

	
	\begin{proof}
		Let \(p^{W^+}(t,x,y)\) be the reflected heat kernel. 
		With the Skorokhod-regulator normalization fixed in
		Section~\ref{ssec:notation}, the boundary local time
		\(L\) is the PCAF in Revuz correspondence with
		\(\frac12\sigma\); see, for example,
		\cite[Theorem~3.5]{Akhlil2012}.
		Therefore, for every nonnegative Borel
		function \(g\), the Revuz formula gives
		\begin{equation}
			\label{eq:Revuz_formula_II}
			\Esp_x\left[
			\int_0^t g(W_s^+)\,dL_s
			\right]
			=
			\frac12
			\int_0^t
			\int_{\partial D}
			g(\xi)\,
			p^{W^+}(s,x,\xi)\,
			d\sigma(\xi)\,ds ,
		\end{equation}
		see also \cite[proof of Theorem~5.3]{bass1991_some_potential_theory}. 
		Here, the heat kernel is taken with respect to Lebesgue measure, and \(\sigma\) denotes surface measure.
		
		Taking \(g\equiv1\) in \eqref{eq:Revuz_formula_II} gives
		\begin{equation}
			\label{eq:proof_Esp_ltime}
			\Esp_x[L_t]
			=
			\frac12
			\int_0^t\int_{\partial D}
			p^{W^+}(s,x,\xi)\,d\sigma(\xi)\,ds.
		\end{equation}
		
		The Gaussian heat-kernel bound
		\cite[Theorem~3.1 and Remark~3.11]{bass1991_some_potential_theory} gives, for every time-horizon \(T>0\),
		\begin{equation}
			\label{eq:proof_Gaussian_surface_bound}
			\int_{\partial D}p^{W^+}(s,x,\xi)\,d\sigma(\xi)
			\le
			C^{(1)}_{D,T}s^{-d/2}
			\int_{\partial D}
			\exp\left(-\frac{|x-\xi|^2}{C^{(2)}_{D}s}\right)\,d\sigma(\xi),
			\qquad 0<s\le T.
		\end{equation}
		The surface integral satisfies
		\[
		\sup_{x\in\overline D}
		\int_{\partial D}
		\exp\left(-\frac{|x-\xi|^2}{C^{(2)}_D s}\right)\,d\sigma(\xi)
		\le
		C_Ds^{(d-1)/2},
		\qquad s>0.
		\]
		Indeed, since \(\partial D\) is a compact \(C^2\) hypersurface, it admits a finite
		covering by \(C^1\) graph charts with uniformly bounded Jacobians. Hence, by
		the area formula,
		\begin{equation}
			\label{eq:surface_estimate}
			\sigma(\partial D\cap B(y,r))
			\le C_D r^{d-1},
			\qquad
			y\in\partial D,\quad 0<r\le r_0;
		\end{equation}
		see, e.g., \cite[\S 3.3.4.B]{evans2015measure}.
		If \(x\in\overline D\) and
		\(B(x,r)\cap\partial D\neq\emptyset\), choose
		\(y\in B(x,r)\cap\partial D\). Then
		\[
		B(x,r)\cap\partial D
		\subset B(y,2r)\cap\partial D.
		\]
		Thus \eqref{eq:surface_estimate} gives the same bound for
		\(x\in\overline D\) whenever \(2r\le r_0\). For larger \(r\), the bound
		follows after increasing \(C_D\), since \(\sigma(\partial D)<\infty\).
		Hence
		\[
		\sigma(\partial D\cap B(x,r))
		\le C_Dr^{d-1},
		\qquad x\in\overline D,\quad r>0.
		\]
		
		We decompose \(\partial D\) into the disjoint dyadic annuli 
		\[
		A_0:=\partial D\cap B(x,2\sqrt{s}),\qquad
		A_n:=\partial D\cap
		\bigl(B(x,2^{n+1}\sqrt{s})\setminus B(x,2^n\sqrt{s})\bigr),
		\quad n\ge1.
		\]
		Then
		\[
		\begin{aligned}
			\int_{\partial D}
			\exp\left(-\frac{|x-\xi|^2}{C^{(2)}_D s}\right)\,d\sigma(\xi)
			&\le
			\sigma(A_0)
			+
			\sum_{n\ge1}
			\exp\left(-\frac{2^{2n}}{C^{(2)}_D}\right)\sigma(A_n)\\
			&\le
			C_Ds^{(d-1)/2}
			+
			C_Ds^{(d-1)/2}
			\sum_{n\ge1}
			2^{(n+1)(d-1)}
			\exp\left(-\frac{2^{2n}}{C^{(2)}_D}\right)\\
			&\le
			C_Ds^{(d-1)/2}.
		\end{aligned}
		\]
		Substituting this into~\eqref{eq:proof_Gaussian_surface_bound} yields
		\begin{equation}
			\label{eq:boundary-kernel-integral}
			\sup_{x\in\overline D}
			\int_{\partial D}p^{W^+}(s,x,\xi)\,d\sigma(\xi)
			\le
			\widetilde C_{D,T}s^{-1/2},
			\qquad 0<s\le T.
		\end{equation}
		By~\eqref{eq:proof_Esp_ltime} and
		\eqref{eq:boundary-kernel-integral},
		\[
		\Esp_x[L_t]
		\le
		\frac{\widetilde C_{D,T}}{2}
		\int_0^t s^{-1/2}\,ds
		=
		\widetilde C_{D,T}t^{1/2},
		\qquad 0<t\le T.
		\]
		This proves the first estimate. 
		
		For the discounted bound, use the same Revuz
		formula with weight \(e^{-cu/\varepsilon}\):
		\[
		\Esp_\xi\left[
		\int_0^T e^{-cu/\varepsilon}\,dL_u
		\right]
		=
		\frac12
		\int_0^T
		e^{-cu/\varepsilon}
		\int_{\partial D}p^{W^+}(u,\xi,\eta)\,d\sigma(\eta)\,du.
		\]
		Therefore, by \eqref{eq:boundary-kernel-integral},
		\[
		\Esp_\xi\left[
		\int_0^T e^{-cu/\varepsilon}\,dL_u
		\right]
		\le
		\frac{\widetilde C_{D,T}}2
		\int_0^T e^{-cu/\varepsilon}u^{-1/2}\,du.
		\]
		By the change of variables \(v=cu/\varepsilon\),
		\[
		\int_0^T e^{-cu/\varepsilon}u^{-1/2}\,du
		=
		\left(\frac{\varepsilon}{c}\right)^{1/2}
		\int_0^{cT/\varepsilon} e^{-v}v^{-1/2}\,dv
		\le
		\left(\frac{\varepsilon}{c}\right)^{1/2}
		\int_0^\infty e^{-v}v^{-1/2}\,dv.
		\]
		Since
		\[
		\int_0^\infty e^{-v}v^{-1/2}\,dv
		=
		\Gamma(1/2)<\infty,
		\]
		setting
		\[
		C_{D,T,c}
		:=
		\frac{\widetilde C_{D,T}}2\,
		c^{-1/2}\Gamma(1/2),
		\]
		we obtain
		\[
		\Esp_\xi\left[
		\int_0^T e^{-cu/\varepsilon}\,dL_u
		\right]
		\le
		C_{D,T,c}\varepsilon^{1/2}.
		\]
		The estimate is uniform in \(\xi\).
		This completes the proof. 
	\end{proof}
	
\begin{proposition}[Boundary-local-time averaging]
	\label{prop:LTA}
	Under the assumptions of Section~\ref{ssec:notation}, let
	\(h:S\to\mathbb R\) be bounded and centered with respect to \(\pi\).
	Then, for every \(T>0\), the following estimates hold uniformly in
	\(x\in\overline D\) and \(i\in S\):
	\begin{enuroman}
		\item\label{item:LTA_marginal}
		For every \(\varepsilon\in(0,1]\) and every \([0,T]\)-valued random
		time \(\tau\) measurable with respect to the reflected Brownian path,
		\begin{equation}
			\label{eq:random-time-local-time-averaging}
			\sup_{\substack{x\in\overline D\\ i\in S}}
			\Esp_{x,i}\left[
			\left|
			\int_0^\tau h(\alpha_{s/\varepsilon})\,dL_s
			\right|^2
			\right]
			\le C_T(h)\varepsilon^{1/2};
		\end{equation}
		
		\item\label{item:LTA_sup}
		For every \(\varepsilon\in(0,1]\),
		\begin{equation}
			\label{eq:maximal-local-time-averaging-log}
			\sup_{\substack{x\in\overline D\\ i\in S}}
			\Esp_{x,i}\left[
			\sup_{0\le t\le T}
			\left|
			\int_0^t h(\alpha_{s/\varepsilon})\,dL_s
			\right|^2
			\right]
			\le
			C_{T,h}\varepsilon^{1/2}
			(1+|\log\varepsilon|)^2.
		\end{equation}
		Consequently, for every \(0<\zeta<1/2\), there exists
		\(C_{T,h,\zeta}<\infty\) such that
		\begin{equation}
			\label{eq:maximal-local-time-averaging}
			\sup_{\substack{x\in\overline D\\ i\in S}}
			\Esp_{x,i}\left[
			\sup_{0\le t\le T}
			\left|
			\int_0^t h(\alpha_{s/\varepsilon})\,dL_s
			\right|^2
			\right]
			\le C_{T,h,\zeta}\varepsilon^\zeta.
		\end{equation}
	\end{enuroman}
\end{proposition}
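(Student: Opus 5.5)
Proposition~\ref{prop:LTA} is a direct specialization of Theorem~\ref{thm:abstract-averaging} with base process \(Y=W^+\) on \(E=\overline D\) and additive functional \(A=L\), the boundary local time. The plan is to check the hypotheses of Section~\ref{sec:abstract-averaging} for this choice, then read off the two estimates and the corollary.

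First, \(W^+\) is a conservative continuous strong Markov (Feller) process on the compact Polish space \(\overline D\), realized canonically with shifts. The regulator \(L\) is a continuous, nondecreasing, adapted process with \(L_0=0\). Additivity \(L_{t+s}=L_t+L_s\circ\theta_t\) holds for every starting point, because \(L\) is the Revuz PCAF of \(\frac12\sigma\) (equivalently, the Skorokhod regulator is pathwise determined by the path). Hence \(L\) is a strict PCAF and no exceptional set is needed, as already noted after the definition in Section~\ref{sec:abstract-averaging}. Finiteness on compacts follows from the moment bound below.

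Second, the short-time hypothesis \eqref{eq:abstract-A-first} with \(\vartheta=1/2\) is exactly Lemma~\ref{lem:short-time-growth-local-time}: \(M(t)=\sup_x\Esp_x L_t\le C_{D,T}t^{1/2}\) for \(0<t\le T\). Since \(h\) is centered, \(\widetilde h=h\), so \(\int_0^t h(\alpha_{s/\varepsilon})\,dL_s=J_\varepsilon(t)\) in the notation \eqref{eq:abstract-J}. The chain \(\alpha\) is independent of \(W^+\) by the standing conventions, and \(\Prb_{x,i}=\Prb^W_x\otimes\Prb^\alpha_i\).

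Then Theorem~\ref{thm:abstract-averaging}\ref{item:abstract-random-time} gives \eqref{eq:random-time-local-time-averaging}, uniformly in \(x\) and \(i\). This applies to every \([0,T]\)-valued \(\tau\) measurable with respect to the reflected path, without any stopping-time property. Part~\ref{item:abstract-max} gives \eqref{eq:maximal-local-time-averaging-log}, and Corollary~\ref{cor:abstract-all-moments} with \(\zeta<\vartheta=1/2\) gives \eqref{eq:maximal-local-time-averaging}, since \(\varepsilon^{1/2-\zeta}(1+|\log\varepsilon|)^2\) is bounded on \((0,1]\).

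The only point needing care is the first one: justifying that \(L\) is a strict PCAF for every starting point \(x\in\overline D\), including boundary points, rather than only quasi-everywhere. I would settle it by the pathwise Skorokhod characterization of \(L\) (or by the identity \(L=\frac12\loct{R}{0}{}\) from Appendix~\ref{app:Holder_continuity}), which yields additivity under every \(\Prb_x\) through the Markov property of the reflected SDE. The rest is bookkeeping of constants. These depend only on \(D\), \(T\), \(h\) and \(Q\), through \(C_{D,T}\) and the mixing constants \(C,c\) of the chain.
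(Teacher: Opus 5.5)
Your proposal is correct and matches the paper's proof: Lemma~\ref{lem:short-time-growth-local-time} verifies \eqref{eq:abstract-A-first} for $A=L$ with $\vartheta=1/2$, and parts (i) and (ii) of Theorem~\ref{thm:abstract-averaging} together with Corollary~\ref{cor:abstract-all-moments} then give the three estimates, uniformly in $x$ and $i$. Your extra check that $L$ is a strict PCAF under every $\Prb_x$, via the pathwise characterization, fills in a point the paper only asserts in Section~\ref{sec:abstract-averaging}; it supplements the paper's argument rather than departing from it.
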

	
\begin{proof}
	By Lemma~\ref{lem:short-time-growth-local-time},
	\[
	\sup_{x\in\overline D}\Esp_x[L_t]
	\le C_{D,T}t^{1/2},
	\qquad 0<t\le T.
	\]
	Hence the hypothesis~\eqref{eq:abstract-A-first} holds with
	\(A=L\) and \(\vartheta=1/2\).
	
	The random-time estimate
	\eqref{eq:random-time-local-time-averaging}
	is therefore
	Theorem~\ref{thm:abstract-averaging}%
	\ref{item:abstract-random-time}, uniformly in
	\(x\in\overline D\) and \(i\in S\).
	Likewise, Theorem~\ref{thm:abstract-averaging}%
	\ref{item:abstract-max} gives
	\[
	\sup_{\substack{x\in\overline D\\ i\in S}}
	\Esp_{x,i}\left[
	\sup_{0\le t\le T}
	\left|
	\int_0^t h(\alpha_{s/\varepsilon})\,dL_s
	\right|^2
	\right]
	\le
	C_{T,h}\varepsilon^{1/2}
	(1+|\log\varepsilon|)^2.
	\]
	In particular, Corollary~\ref{cor:abstract-all-moments}
	yields
	\eqref{eq:maximal-local-time-averaging}
	for every \(0<\zeta<1/2\).
\end{proof}
	
	\section{Proofs of the forward formulations}
	\label{app:forward-proofs}
	
	\begin{proof}[Proof of Theorem~\ref{thm:forward_quenched}]
		Since \(S_t^\alpha\) is a contraction on \(C(\overline D)\), duality gives
		\[
		\begin{aligned}
			\|\mu_t^\alpha\|_{\mathrm{TV}}
			&=
			\sup_{\substack{f\in C(\overline D)\\ \|f\|_\infty\le1}}
			\left|
			\int_{\overline D}f\,d\mu_t^\alpha
			\right|
			\\
			&=
			\sup_{\substack{f\in C(\overline D)\\ \|f\|_\infty\le1}}
			\left|
			\int_{\overline D}S_t^\alpha f\,d\mu_0
			\right|
			\\
			&\le
			\|\mu_0\|_{\mathrm{TV}},
		\end{aligned}
		\]
		where the first identity is the norm characterization in the
		Riesz--Markov--Kakutani theorem; see, e.g.,
		\cite[Theorem~6.19]{rudin1987real}.
		If \(\kappa\equiv0\), then \(S_t^\alpha1=1\), and therefore
		\[
		\mu_t^\alpha(\overline D)
		=
		\mu_0(\overline D).
		\]

		For every $f\in C(\overline D)$ and every fixed $x$,
		continuity of the time-changed spatial path and of the killing
		functional, together with bounded convergence, gives
		$S_t^\alpha f(x)\to S_s^\alpha f(x)$ as $t\to s$.
		Since $|S_t^\alpha f(x)|\le\|f\|_\infty$, dominated convergence
		with respect to the finite measure $|\mu_0|$ yields
		\[
		\int_{\overline D} f\,d\mu_t^\alpha
		=
		\int_{\overline D}S_t^\alpha f\,d\mu_0
		\longrightarrow
		\int_{\overline D}S_s^\alpha f\,d\mu_0
		=
		\int_{\overline D}f\,d\mu_s^\alpha.
		\]
		This proves narrow continuity.

		Integrating \eqref{eq:quenched-backward-dynkin} with respect to
		\(\mu_0(dx)\) gives \eqref{eq:quenched-forward-dynkin}.
		
		If \(f\in\mathcal D_\alpha\), then
		\eqref{eq:quenched-backward-mild} gives
		\[
		\begin{aligned}
			\int_{\overline D}f\,d\mu_t^\alpha
			-
			\int_{\overline D}f\,d\mu_0
			&=
			\int_{\overline D}
			(S_t^\alpha f-f)\,d\mu_0
			\\
			&=
			\frac12
			\int_0^t
			\int_{\overline D}
			S_s^\alpha(\Delta f)(x)\,\mu_0(dx)\,ds
			\\
			&=
			\frac12
			\int_0^t
			\int_{\overline D}
			\Delta f(y)\,\mu_s^\alpha(dy)\,ds.
		\end{aligned}
		\]
		This proves \eqref{eq:weak-forward-equation} and
		\eqref{eq:weak-forward-derivative}. 
		
		For \eqref{eq:weak-forward-time-dependent}, fix $\varphi$ as in the statement
		and set $g(t):=\int_{\overline D}\varphi(t,x)\,\mu_t^\alpha(dx)$. Writing, for
		$h>0$,
		\[
		\frac{g(t+h)-g(t)}{h}
		=
		\int_{\overline D}\frac{\varphi(t+h,\cdot)-\varphi(t,\cdot)}{h}\,d\mu_{t+h}^\alpha
		+
		\frac{1}{h}\left(
		\int_{\overline D}\varphi(t,\cdot)\,d\mu_{t+h}^\alpha
		-
		\int_{\overline D}\varphi(t,\cdot)\,d\mu_t^\alpha
		\right),
		\]
		the first term converges to $\int_{\overline D}\partial_t\varphi(t,\cdot)\,d\mu_t^\alpha$
		by $\varphi\in C^1([0,T];C(\overline D))$, the uniform boundedness
		$\|\mu_t^\alpha\|_{\mathrm{TV}}\le\|\mu_0\|_{\mathrm{TV}}$, and narrow
		continuity; the second term converges to
		$\tfrac12\int_{\overline D}\Delta\varphi(t,\cdot)\,d\mu_t^\alpha$ by
		\eqref{eq:weak-forward-derivative}, since $\varphi(t,\cdot)\in\mathcal D_\alpha$.
		Hence $g$ is continuously differentiable with
		$g'(t)=\int_{\overline D}(\partial_t\varphi+\tfrac12\Delta\varphi)\,d\mu_t^\alpha$,
		and integrating from $0$ to $T$ gives \eqref{eq:weak-forward-time-dependent}.
	\end{proof}

\section{Proofs of the forward homogenization results}
\label{app:forward-homogenization}

\begin{proof}[Proof of
	Corollary~\ref{cor:quenched-forward-homogenization}]
	Fix \(T>0\) and \(0<\beta<1/8\), and set
	\(m_0:=\mu_0(\overline D)\).
	We write \(\Esp_{x,\pi}\) for joint expectation over the reflected
	Brownian motion started from \(x\) and the environment started from its
	stationary distribution \(\pi\).
	
	Under the common-reference coupling used in
	Section~\ref{sec:homogenization}, in which the switching and homogenized
	models are built from the same reflected Brownian motion, put
	\[
	R_\varepsilon(t)
	:=
	\left|
	W^+_{\gamma_\varepsilon(t)}
	-
	W^+_{\gamma_0(t)}
	\right|
	+
	\left|
	K_\varepsilon(t)-K_0(t)
	\right|.
	\]
	The uniform spatial estimate
	\eqref{eq:uniform-process-rate}, with \(p=1\), and the maximal
	killing estimate obtained in the proof of
	Theorem~\ref{thm:SE_homogenization_1} give
	\begin{equation}
		\label{eq:forward-proof-coupling-bound}
		\sup_{x\in\overline D}
		\Esp_{x,\pi}
		\left[
		\sup_{0\le t\le T}R_\varepsilon(t)
		\right]
		\le C\varepsilon^\beta.
	\end{equation}
	The constant is uniform in \(x\): the averaging estimates follow from
	Theorem~\ref{thm:abstract-averaging} and
	Lemma~\ref{lem:short-time-growth-local-time}, uniformly in the starting
	point, while Proposition~\ref{prop:Holder} provides the required uniform
	H\"older-moment bounds. Thus the spatial and killing estimates used in
	\eqref{eq:forward-proof-coupling-bound} have constants depending only on
	\(D,T,Q,\rho,\kappa,\beta\), and not on \(x,\varepsilon\), or \(\mu_0\).
	
	For a bounded Lipschitz function \(f\), the duality definitions
	\eqref{eq:forward-epsilon-duality} and
	\eqref{eq:forward-zero-duality} give
	\[
	\int_{\overline D}
	f\,d\bigl(\mu_t^{\varepsilon,\alpha}-\mu_t^0\bigr)
	=
	\int_{\overline D}
	\bigl(
	u_\alpha^\varepsilon(t,x;f)-u^0(t,x;f)
	\bigr)\,\mu_0(dx).
	\]
	If \(\|f\|_{\mathrm{BL}}\le1\), then
	\(\|f\|_\infty\le1\) and \(\operatorname{Lip}(f)\le1\).
	Consequently, \eqref{eq:Y-difference-bound} implies, for every fixed
	environment,
	\[
	\left|
	u_\alpha^\varepsilon(t,x;f)-u^0(t,x;f)
	\right|
	\le
	\Esp_x^W\!\left[R_\varepsilon(t)\right].
	\]
	Since \(\mu_0\) is nonnegative, it follows that
	\[
	\left|
	\int_{\overline D}
	f\,d\bigl(\mu_t^{\varepsilon,\alpha}-\mu_t^0\bigr)
	\right|
	\le
	\int_{\overline D}
	\Esp_x^W\!\left[R_\varepsilon(t)\right]\,\mu_0(dx).
	\]
	The right-hand side does not depend on \(f\). Taking the supremum over
	\(\|f\|_{\mathrm{BL}}\le1\) therefore yields
	\begin{equation}
		\label{eq:forward-proof-BL-bound}
		d_{\mathrm{BL}}
		\bigl(\mu_t^{\varepsilon,\alpha},\mu_t^0\bigr)
		\le
		\int_{\overline D}
		\Esp_x^W\!\left[R_\varepsilon(t)\right]\,\mu_0(dx).
	\end{equation}
	
The suprema used here are measurable. Indeed, the bounded-Lipschitz unit
ball is uniformly bounded and equicontinuous. By the Arzel\`a--Ascoli
theorem, its closure in \(C(\overline D)\) is compact; see
\cite[Theorem~4.43]{folland1999real}. Moreover, the unit ball is closed in
the uniform norm. Indeed, if \(f_n\to f\) uniformly and
\(\|f_n\|_{\mathrm{BL}}\le1\), then
\[
\|f\|_\infty=\lim_{n\to\infty}\|f_n\|_\infty
\]
and
\[
\operatorname{Lip}(f)
\le
\liminf_{n\to\infty}\operatorname{Lip}(f_n).
\]
Consequently,
\[
\|f\|_{\mathrm{BL}}
=
\|f\|_\infty+\operatorname{Lip}(f)
\le
\liminf_{n\to\infty}
\bigl(
\|f_n\|_\infty+\operatorname{Lip}(f_n)
\bigr)
\le1.
\]
Thus the bounded-Lipschitz unit ball is compact, hence separable, in the
uniform norm, and therefore admits a countable dense subset.

Moreover, for every \(f\in C(\overline D)\), the map
\[
t\longmapsto
\int_{\overline D} f\,d\mu_t^{\varepsilon,\alpha}
\]
is continuous, and similarly for \(\mu_t^0\). Hence the suprema over test
functions and over time may be taken over countable dense subsets.
	
	Taking the supremum in time in
	\eqref{eq:forward-proof-BL-bound}, then averaging over the environment
	and applying Tonelli's theorem, gives
	\[
	\begin{aligned}
		&\Esp^\alpha
		\left[
		\sup_{0\le t\le T}
		d_{\mathrm{BL}}
		\bigl(\mu_t^{\varepsilon,\alpha},\mu_t^0\bigr)
		\right]
		\\
		&\qquad\le
		\int_{\overline D}
		\Esp^\alpha\Esp_x^W
		\left[
		\sup_{0\le t\le T}R_\varepsilon(t)
		\right]\,\mu_0(dx)
		\\
		&\qquad=
		\int_{\overline D}
		\Esp_{x,\pi}
		\left[
		\sup_{0\le t\le T}R_\varepsilon(t)
		\right]\,\mu_0(dx)
		\\
		&\qquad\le
		C m_0\varepsilon^\beta,
	\end{aligned}
	\]
	where the last inequality follows from
	\eqref{eq:forward-proof-coupling-bound}.
	This proves \eqref{eq:forward-BL-uniform}.
	The fixed-time estimate \eqref{eq:forward-BL-fixed} follows from
	\[
	\sup_{0\le t\le T}
	\Esp^\alpha
	d_{\mathrm{BL}}
	\bigl(\mu_t^{\varepsilon,\alpha},\mu_t^0\bigr)
	\le
	\Esp^\alpha
	\left[
	\sup_{0\le t\le T}
	d_{\mathrm{BL}}
	\bigl(\mu_t^{\varepsilon,\alpha},\mu_t^0\bigr)
	\right].
	\]
	
	Finally, for every \(\eta>0\), Markov's inequality gives
	\[
	\Prb^\alpha
	\left(
	\sup_{0\le t\le T}
	d_{\mathrm{BL}}
	\bigl(\mu_t^{\varepsilon,\alpha},\mu_t^0\bigr)
	>\eta
	\right)
	\le
	\frac{C m_0}{\eta}\varepsilon^\beta
	\longrightarrow0.
	\]
	Both forward measures have mass at most \(m_0\).
	Since \(\overline D\) is compact, the bounded-Lipschitz distance
	metrizes narrow convergence on this set of measures.
	This proves the asserted uniform-in-time convergence in probability.
\end{proof}

\begin{proof}[Proof of
	Corollary~\ref{cor:improved-forward-homogenization}]
	Fix \(T>0\), and set
	\[
	a_f
	:=
	\left\|cf+\frac12\Delta f\right\|_\infty.
	\]
	As in the proof of
	Theorem~\ref{thm:improved-smooth-observable}, the proportionality condition
	\(\kappa_i=c\rho_i\) yields
	\[
	K_\varepsilon(t)=c\bigl(t-\gamma_\varepsilon(t)\bigr),
	\qquad
	K_0(t)=c\bigl(t-\gamma_0(t)\bigr).
	\]
	For a fixed environment, apply It\^o's formula to
	\(F(s,x)=e^{cs}f(x)\) at the bounded stopping times
	\(\gamma_\varepsilon(t)\) and \(\gamma_0(t)\).
	The Neumann condition removes the boundary local-time term.
	Taking expectations and subtracting the two identities yields
	\[
	u_\alpha^\varepsilon(t,x;f)-u^0(t,x;f)
	=
	\Esp_x^W
	\left[
	\int_{\gamma_0(t)}^{\gamma_\varepsilon(t)}
	e^{-c(t-s)}
	\left(cf+\frac12\Delta f\right)(W_s^+)\,ds
	\right],
	\]
	where the integral is understood in the oriented sense.
	
	Both inverse clocks are at most \(t\), and \(c\ge0\).
	Thus \(e^{-c(t-s)}\le1\) on the interval between them, and
	\begin{equation}
		\label{eq:forward-proof-smooth-clock-bound}
		\left|
		u_\alpha^\varepsilon(t,x;f)-u^0(t,x;f)
		\right|
		\le
		a_f\Esp_x^W
		\left|
		\gamma_\varepsilon(t)-\gamma_0(t)
		\right|.
	\end{equation}
	Using duality and the nonnegativity of \(\mu_0\), we obtain
	\begin{equation}
		\label{eq:forward-proof-smooth-duality-bound}
		\left|
		\int_{\overline D}
		f\,d\bigl(\mu_t^{\varepsilon,\alpha}-\mu_t^0\bigr)
		\right|
		\le
		a_f\int_{\overline D}
		\Esp_x^W
		\left|
		\gamma_\varepsilon(t)-\gamma_0(t)
		\right|\,\mu_0(dx).
	\end{equation}
	
	The fixed-time clock estimate
	\eqref{eq:fixed-time-gamma-L2-rate} holds uniformly in the starting
	point, by
	Theorem~\ref{thm:abstract-averaging} and
	Lemma~\ref{lem:short-time-growth-local-time}.
	Consequently, Tonelli's theorem and Lyapunov's inequality give
	\[
	\begin{aligned}
		&\sup_{0\le t\le T}
		\Esp^\alpha
		\left|
		\int_{\overline D}
		f\,d\bigl(\mu_t^{\varepsilon,\alpha}-\mu_t^0\bigr)
		\right|
		\\
		&\qquad\le
		a_f\int_{\overline D}
		\sup_{0\le t\le T}
		\Esp_{x,\pi}
		\left|
		\gamma_\varepsilon(t)-\gamma_0(t)
		\right|\,\mu_0(dx)
		\\
		&\qquad\le
		a_f\int_{\overline D}
		\sup_{0\le t\le T}
		\left(
		\Esp_{x,\pi}
		\left|
		\gamma_\varepsilon(t)-\gamma_0(t)
		\right|^2
		\right)^{1/2}\,\mu_0(dx)
		\\
		&\qquad\le
		C a_f\,\mu_0(\overline D)\,\varepsilon^{1/4}.
	\end{aligned}
	\]
	This proves \eqref{eq:forward-improved-fixed}.
	
	For the uniform estimate, the inverse-clock comparison
	\eqref{eq:fixed-time-inverse-bound} gives
	\[
	\sup_{0\le t\le T}
	\left|
	\gamma_\varepsilon(t)-\gamma_0(t)
	\right|
	\le
	\sup_{0\le r\le T}
	\left|
	\int_0^r
	\bigl(\rho(\alpha_{s/\varepsilon})-\overline\rho\bigr)\,dL_s
	\right|.
	\]
	Applying Theorem~\ref{thm:abstract-averaging} with
	\(A=L\), \(h=\rho-\overline\rho\), and \(\vartheta=1/2\), we obtain
	\[
	\sup_{x\in\overline D}
	\Esp_{x,\pi}
	\left[
	\sup_{0\le t\le T}
	\left|
	\gamma_\varepsilon(t)-\gamma_0(t)
	\right|^2
	\right]
	\le
	C\varepsilon^{1/2}
	\bigl(1+|\log\varepsilon|\bigr)^2.
	\]
	Taking the supremum in time in
	\eqref{eq:forward-proof-smooth-duality-bound}, then using again Tonelli's
	theorem and Lyapunov's inequality, gives
	\[
	\begin{aligned}
		&\Esp^\alpha
		\left[
		\sup_{0\le t\le T}
		\left|
		\int_{\overline D}
		f\,d\bigl(\mu_t^{\varepsilon,\alpha}-\mu_t^0\bigr)
		\right|
		\right]
		\\
		&\qquad\le
		a_f\int_{\overline D}
		\Esp_{x,\pi}
		\left[
		\sup_{0\le t\le T}
		\left|
		\gamma_\varepsilon(t)-\gamma_0(t)
		\right|
		\right]\,\mu_0(dx)
		\\
		&\qquad\le
		a_f\int_{\overline D}
		\left(
		\Esp_{x,\pi}
		\left[
		\sup_{0\le t\le T}
		\left|
		\gamma_\varepsilon(t)-\gamma_0(t)
		\right|^2
		\right]
		\right)^{1/2}\,\mu_0(dx)
		\\
		&\qquad\le
		C a_f\,\mu_0(\overline D)\,
		\varepsilon^{1/4}\bigl(1+|\log\varepsilon|\bigr).
	\end{aligned}
	\]
	Absorbing \(a_f\) into the constant \(C_f\) proves
	\eqref{eq:forward-improved-uniform}.
	Taking \(f\equiv1\) gives the stated bounds for the total surviving mass.
\end{proof}
	
	\section{Numerical approximation of reflection, time change, and killing}
	\label{app:numerical-scheme}
	
	In order to approximate by Monte Carlo the quenched Feynman--Kac
	evolution in \eqref{eq:quenched-semigroup}, its annealed averages, and
	the homogenized evolution in \eqref{eq:limit-semigroup}, one must
	simulate jointly the reflected Brownian motion and its boundary local
	time, the switching environment, the corresponding clocks and their
	inverses, and the associated Feynman--Kac functionals.
	We define such a numerical scheme below and prove its consistency.
	Quantitative discretization rates are beyond the scope of the present
	paper and are left for future work.
	
	Several approximation schemes for sticky and reflected diffusions are
	available in the literature; see, for example,
	\cite{amir1991Sticky_Brownian,anagnostakis2023General_diffusion,ankirchner2020A_functional,bou-rabee2020Sticky_Brownian,jiang2026_a_modified_Euler,meier2019Markov_Chain,sharma2025_weak_approximation}. Here we require, in addition to an approximation of the
	reflected path and its boundary local time, a coupled approximation of the
	local-time clock, its inverse, and the associated killing functional.
	
	\subsection{Approximation of reflection and boundary local time.}
	\label{subsec:numerical-reflection}
	
	Let \(Z\) be Brownian motion with generator \(D\partial_{xx}\),
	normally reflected on \([-R,R]\). We write its Skorokhod decomposition as
	\begin{equation}
		\label{eq:numerical-skorokhod-decomposition}
		Z_s
		=
		x+\sqrt{2D}\,W_s+L_s^- -L_s^+,
	\end{equation}
	where \(L^-\) and \(L^+\) are the boundary regulators at
	\(-R\) and \(R\), respectively. Throughout this section we use the
	normalization fixed by \eqref{eq:numerical-skorokhod-decomposition} and set
	\begin{equation}
		\label{eq:numerical-total-local-time}
		L:=L^-+L^+.
	\end{equation}
	
	Let \(s_k=kh\) be an operational-time mesh. Starting from
	\(Z_0^h=x\) and \(L_0^{\pm,h}=0\), we consider the recursive
	projection scheme
	\begin{equation}
		\label{eq:numerical-projection-scheme}
		\begin{aligned}
			\widetilde Z_{k+1}^h
			&=
			Z_k^h+\sqrt{2D}\,
			(W_{s_{k+1}}-W_{s_k}),
			\\
			\Delta L_{k+1}^{-,h}
			&=
			(-R-\widetilde Z_{k+1}^h)^+,
			&
			\Delta L_{k+1}^{+,h}
			&=
			(\widetilde Z_{k+1}^h-R)^+,
			\\
			Z_{k+1}^h
			&=
			\widetilde Z_{k+1}^h
			+\Delta L_{k+1}^{-,h}
			-\Delta L_{k+1}^{+,h}.
		\end{aligned}
	\end{equation}
	Thus \(Z_{k+1}^h\) is simply the projection of the Gaussian proposal
	\(\widetilde Z_{k+1}^h\) onto \([-R,R]\). 
	The accumulated corrections at the two endpoints are
	\begin{equation}
		\label{eq:numerical-discrete-regulators}
		L_{s_k}^{\pm,h}
		=
		\sum_{j=1}^k\Delta L_j^{\pm,h}.
	\end{equation}
	These will serve as approximations of the boundary regulators
	\(L^-\) and \(L^+\), respectively.
	We extend \(Z^h\) and \(L^{\pm,h}\) as piecewise constant processes on
	each interval \([s_k,s_{k+1})\).
	
	The recursion \eqref{eq:numerical-projection-scheme} is the
	one-dimensional, constant-coefficient version of the recursive projection
	scheme studied in \cite{slominski2001euler}; see in particular
	equation~(3.1) and Theorems~3.1 and~3.2 therein.
	
	It is also useful to view the scheme directly through the interval
	Skorokhod map. Define
	\begin{equation}
		\label{eq:numerical-driving-paths}
		\xi_s^h
		=
		x+\sqrt{2D}\,W_{h\lfloor s/h\rfloor},
		\qquad
		\xi_s
		=
		x+\sqrt{2D}\,W_s.
	\end{equation}
	Let \(\Gamma_{[-R,R]}\) denote the two-sided Skorokhod map on
	\([-R,R]\). After translating \([-R,R]\) to \([0,2R]\), we have
	\[
	\Gamma_{[-R,R]}(\psi)
	=
	\Gamma_{0,2R}(\psi+R)-R,
	\]
	where the explicit representation of
	\cite[Remark~1.5]{kruk2007explicit} is
	\begin{equation}
		\label{eq:explicit-interval-skorokhod-map}
		\Gamma_{0,a}(\psi)(t)
		=
		\psi(t)
		-
		\left[
		\left(
		(\psi(0)-a)^+
		\wedge
		\inf_{0\le u\le t}\psi(u)
		\right)
		\vee
		\sup_{0\le s\le t}
		\left(
		(\psi(s)-a)
		\wedge
		\inf_{s\le u\le t}\psi(u)
		\right)
		\right].
	\end{equation}
	Thus
	\[
	Z^h=\Gamma_{[-R,R]}(\xi^h),
	\qquad
	Z=\Gamma_{[-R,R]}(\xi).
	\]
	
	Fix \(S>0\). By the uniform-norm Lipschitz estimate of
	\cite[Corollary~1.6]{kruk2007explicit}, there exists a constant
	\(C_\Gamma>0\), independent of \(S\), such that
	\begin{equation}
		\label{eq:numerical-skorokhod-lipschitz}
		\sup_{0\le s\le S}|Z_s^h-Z_s|
		\le
		C_\Gamma
		\sup_{0\le s\le S}|\xi_s^h-\xi_s|.
	\end{equation}
	Since Brownian paths are uniformly continuous on compact time intervals,
	\begin{equation}
		\label{eq:numerical-driver-convergence}
		\sup_{0\le s\le S}
		|\xi_s^h-\xi_s|
		\longrightarrow0
		\qquad\text{almost surely}.
	\end{equation}
	Hence \eqref{eq:numerical-skorokhod-lipschitz} yields
	\begin{equation}
		\label{eq:numerical-reflected-convergence}
		\sup_{0\le s\le S}
		|Z_s^h-Z_s|
		\longrightarrow0
		\qquad\text{almost surely}.
	\end{equation}
	
	The convergence of the net constraining term now follows directly from
	the identity between the driving and reflected paths. Namely, write
	\begin{equation}
		\label{eq:numerical-net-regulators}
		\eta^h
		:=
		Z^h-\xi^h
		=
		L^{-,h}-L^{+,h},
		\qquad
		\eta
		:=
		Z-\xi
		=
		L^- -L^+.
	\end{equation}
	Combining
	\eqref{eq:numerical-driver-convergence} and
	\eqref{eq:numerical-reflected-convergence}, we obtain
	\begin{equation}
		\label{eq:numerical-net-regulator-convergence}
		\delta_h
		:=
		\sup_{0\le s\le S}
		|\eta_s^h-\eta_s|
		\longrightarrow0
		\qquad\text{almost surely}.
	\end{equation}
	
	For the approximation of the clock below, it remains to show convergence
	of the total boundary local time
	\[
	L^h:=L^{-,h}+L^{+,h}.
	\]
	This does not follow directly from the uniform convergence of the net
	constraining terms
	\[
	\eta^h=L^{-,h}-L^{+,h},
	\qquad
	\eta=L^- -L^+,
	\]
	but it follows from a short localization argument.
	
	Fix a sample path for which
	\eqref{eq:numerical-reflected-convergence} and
	\eqref{eq:numerical-net-regulator-convergence} hold. By continuity of
	\(Z\), choose a finite partition
	\[
	0=t_0<t_1<\cdots<t_m=S
	\]
	such that
	\[
	\sup_{s,t\in[t_{j-1},t_j]}|Z_s-Z_t|
	<
	\frac R2,
	\qquad j=1,\ldots,m.
	\]
	Since the two endpoints are \(2R\) apart, this implies that on each
	interval \(I=[t_{j-1},t_j]\), at least one endpoint remains at distance
	greater than \(3R/4\) from \(Z\). Since \(Z^h\to Z\) uniformly, for all
	sufficiently small \(h\), the paths \(Z\) and \(Z^h\) avoid the same
	endpoint on each such interval.
	
	Let
	\[
	\delta_h
	:=
	\sup_{0\le s\le S}|\eta_s^h-\eta_s|.
	\]
	Suppose first that both paths avoid \(R\) on
	\([a,b]=[t_{j-1},t_j]\). Then the upper regulators are constant there,
	and hence, for \(a\le t\le b\),
	\[
	L_t^h-L_a^h
	=
	\eta_t^h-\eta_a^h,
	\qquad
	L_t-L_a
	=
	\eta_t-\eta_a.
	\]
	Consequently,
	\[
	\sup_{a\le t\le b}
	\left|
	(L_t^h-L_a^h)-(L_t-L_a)
	\right|
	\le 2\delta_h.
	\]
	If instead both paths avoid \(-R\), the lower regulators are constant and
	\[
	L_t^h-L_a^h
	=
	-(\eta_t^h-\eta_a^h),
	\qquad
	L_t-L_a
	=
	-(\eta_t-\eta_a),
	\]
	so the same estimate holds.
	
	Since \(L_0^h=L_0=0\), iteration over the finitely many intervals gives
	\[
	\sup_{0\le s\le S}|L_s^h-L_s|
	\le
	2m\,\delta_h.
	\]
	Since \(\delta_h\to0\) almost surely, we conclude that
	\begin{equation}
		\label{eq:numerical-local-time-convergence}
		\sup_{0\le s\le S}|L_s^h-L_s|
		\longrightarrow0
		\qquad\text{almost surely}.
	\end{equation}

	\subsection{Approximation of the clock.}
	\label{subsec:numerical-clock}
	
	Fix \(\varepsilon>0\), let
	\[
	\alpha_s^\varepsilon:=\alpha_{s/\varepsilon},
	\qquad s\ge0,
	\]
	and consider the operational-time mesh
	\[
	s_k=kh,
	\qquad k\ge0,
	\]
	with step size \(h>0\).
	Using the discrete total boundary increment
	\begin{equation}
		\label{eq:numerical-total-boundary-increment}
		\Delta L_{k+1}^h
		:=
		\Delta L_{k+1}^{-,h}
		+
		\Delta L_{k+1}^{+,h},
	\end{equation}
	we construct the discrete clock and killing functional. 
	For this purpose,
	it is convenient to introduce the clock $A^{\varepsilon} $ and the operational-time killing functional \(B^\varepsilon\):
	\begin{equation}
		\label{eq:numerical-continuous-clock-killing}
		A^\varepsilon(s)
		=
		s+
		\int_0^s
		\rho_{\alpha_r^\varepsilon}\,dL_r,
		\qquad
		B^\varepsilon(s)
		=
		\int_0^s
		\kappa_{\alpha_r^\varepsilon}\,dL_r, \quad s\ge 0.
	\end{equation}
	We approximate these by the recursively defined discrete-time quantities
	\begin{equation}
		\label{eq:numerical-discrete-clock-killing}
		\begin{aligned}
			A_{k+1}^{\varepsilon,h}
			&=
			A_k^{\varepsilon,h}
			+h
			+\rho_{\alpha_{s_k}^\varepsilon}
			\Delta L_{k+1}^h,
			\\
			B_{k+1}^{\varepsilon,h}
			&=
			B_k^{\varepsilon,h}
			+\kappa_{\alpha_{s_k}^\varepsilon}
			\Delta L_{k+1}^h,
		\end{aligned}
		\qquad k\ge 0,
	\end{equation}
	with \(A_0^{\varepsilon,h}=B_0^{\varepsilon,h}=0\).
	The coefficient multiplying a boundary increment is evaluated at the
	left endpoint \(s_k\) of the operational-time step. 
	The homogenized scheme is obtained by replacing the state-dependent coefficients $(\rho_{\alpha_{s_k}^\varepsilon},\kappa_{\alpha_{s_k}^\varepsilon}) $ by the constants \((\bar\rho,\bar\kappa)\).
	
	We use piecewise-constant interpolation between consecutive operational mesh points:
	\[
	A^{\varepsilon,h}(s)
	=
	A_k^{\varepsilon,h},
	\qquad
	B^{\varepsilon,h}(s)
	=
	B_k^{\varepsilon,h},
	\qquad
	s\in[s_k,s_{k+1}).
	\]
	
	Since \(B^\varepsilon\) is defined in operational time, the corresponding
	physical-time killing functional is
	\begin{equation}
		\label{eq:numerical-physical-killing}
		K_\varepsilon(t)
		=
		B^\varepsilon(\gamma^\varepsilon(t)),
		\qquad
		\gamma^\varepsilon(t)
		=
		\inf\{s\ge0:A^\varepsilon(s)>t\}.
	\end{equation}
	At a physical time \(t\), we approximate \(\gamma^\varepsilon(t)\) by
	the first operational mesh point for which the discrete clock reaches
	level \(t\):
	\begin{equation}
		\label{eq:numerical-first-crossing}
		n_h^\varepsilon(t)
		=
		\min\{k:A_k^{\varepsilon,h}\ge t\}.
	\end{equation}
	The resulting numerical approximation of the Feynman--Kac observable and its continuous analogue are 
	\begin{equation}
		\label{eq:numerical-FK-observable}
		f\!\left(
		Z^h_{s_{n_h^\varepsilon(t)}}
		\right)
		\exp\!\left\{
		-B_{n_h^\varepsilon(t)}^{\varepsilon,h}
		\right\}, \quad
		f\!\left(Z_{\gamma^\varepsilon(t)}\right)
		\exp\!\left\{-B^\varepsilon(\gamma^\varepsilon(t))\right\}.
	\end{equation}
	
	We now verify consistency for fixed \(\varepsilon\).
	Let \(S>0\) be a bounded operational-time horizon. Almost surely,
	\(\alpha^\varepsilon\) has only finitely many jumps on \([0,S]\), and
	by \eqref{eq:numerical-local-time-convergence},
	\[
	\sup_{0\le s\le S}|L_s^h-L_s|
	\longrightarrow0
	\qquad\text{almost surely}.
	\]
	
	It remains to account for the jumps of the integrands in
	\eqref{eq:numerical-continuous-clock-killing}. Let
	\(\tau_1,\ldots,\tau_N\) be the jump times of \(\alpha^\varepsilon\) in
	\([0,S]\). Since \(L\) is continuous, for every \(\eta>0\) we may choose
	pairwise disjoint intervals \((a_j,b_j)\) containing \(\tau_j\) such that
	\[
	\sum_{j=1}^N
	\bigl(L_{b_j}-L_{a_j}\bigr)
	<
	\eta.
	\]
	For sufficiently small \(h\), every mesh interval containing a jump time
	is contained in one of these intervals. On each component of their
	complement, the chain is constant, so the sums in
	\eqref{eq:numerical-discrete-clock-killing} are constant multiples of
	increments of \(L^h\). These converge uniformly to the corresponding
	increments of \(L\).
	
	On the intervals containing the jumps, the contribution is small because
	the coefficients \(\rho_i\) and \(\kappa_i\) are bounded and, for each
	fixed choice of the intervals,
	\[
	\sum_{j=1}^N
	\bigl(L_{b_j}^h-L_{a_j}^h\bigr)
	\longrightarrow
	\sum_{j=1}^N
	\bigl(L_{b_j}-L_{a_j}\bigr).
	\]
	Since \(\eta\) is arbitrary, we obtain
	\begin{equation}
		\label{eq:numerical-clock-killing-convergence}
		\sup_{0\le s\le S}
		|A^{\varepsilon,h}(s)-A^\varepsilon(s)|
		\longrightarrow0,
		\qquad
		\sup_{0\le s\le S}
		|B^{\varepsilon,h}(s)-B^\varepsilon(s)|
		\longrightarrow0
	\end{equation}
	almost surely.
	
	Since \(s_k=kh\), the operational time corresponding to the first
	crossing index \(n_h^\varepsilon(t)\) is
	\begin{equation}
		\label{eq:numerical-discrete-inverse}
		\gamma_h^\varepsilon(t)
		:=
		s_{n_h^\varepsilon(t)}
		=
		h\,n_h^\varepsilon(t).
	\end{equation}
	
	Recall that
	\[
	A^\varepsilon(s)-A^\varepsilon(r)\ge s-r,
	\qquad 0\le r\le s,
	\]
	so that its inverse \(\gamma^\varepsilon\) is \(1\)-Lipschitz.
	Moreover, \(A_k^{\varepsilon,h}\ge kh\), and hence
	\[
	\gamma_h^\varepsilon(t)\le t+h.
	\]
	Set
	\[
	\delta_h
	:=
	\sup_{0\le s\le T+h}
	|A^{\varepsilon,h}(s)-A^\varepsilon(s)|.
	\]
	If \(n=n_h^\varepsilon(t)\ge1\), then
	\[
	A_{n-1}^{\varepsilon,h}<t\le A_n^{\varepsilon,h}.
	\]
	Using the \(1\)-Lipschitz property of \(\gamma^\varepsilon\) at the two
	mesh points \(s_{n-1}\) and \(s_n\), it follows that
	\[
	|\gamma_h^\varepsilon(t)-\gamma^\varepsilon(t)|
	\le
	\delta_h+h.
	\]
	The case \(t=0\) is immediate. Therefore,
	\[
	\sup_{0\le t\le T}
	|\gamma_h^\varepsilon(t)-\gamma^\varepsilon(t)|
	\le
	\sup_{0\le s\le T+h}
	|A^{\varepsilon,h}(s)-A^\varepsilon(s)|
	+h.
	\]
	By \eqref{eq:numerical-clock-killing-convergence},
	\begin{equation}
		\label{eq:numerical-inverse-convergence}
		\sup_{0\le t\le T}
		|\gamma_h^\varepsilon(t)-\gamma^\varepsilon(t)|
		\longrightarrow0
		\qquad\text{almost surely},
	\end{equation}
	for every \(T>0\).
	
	Finally, with the piecewise-constant interpolation above,
	\[
	Z^h_{s_{n_h^\varepsilon(t)}}
	=
	Z^h_{\gamma_h^\varepsilon(t)},
	\qquad
	B_{n_h^\varepsilon(t)}^{\varepsilon,h}
	=
	B^{\varepsilon,h}(\gamma_h^\varepsilon(t)).
	\]
	Since \(Z\) and \(B^\varepsilon\) are continuous,
	\eqref{eq:numerical-reflected-convergence},
	\eqref{eq:numerical-clock-killing-convergence}, and
	\eqref{eq:numerical-inverse-convergence} imply, for every \(T>0\) and
	every bounded continuous \(f\),
	\begin{equation}
		\label{eq:numerical-observable-convergence}
		\begin{aligned}
			\sup_{0\le t\le T}
			\Bigg|
			&
			f\!\left(
			Z^h_{s_{n_h^\varepsilon(t)}}
			\right)
			\exp\!\left\{
			-B_{n_h^\varepsilon(t)}^{\varepsilon,h}
			\right\}
			\\
			&\qquad
			-
			f\!\left(
			Z_{\gamma^\varepsilon(t)}
			\right)
			e^{-B^\varepsilon(\gamma^\varepsilon(t))}
			\Bigg|
			\longrightarrow0
			\qquad\text{almost surely}.
		\end{aligned}
	\end{equation}
	
	There is no difficulty in choosing the operational-time horizon:
	we know that 
	\(\gamma^\varepsilon(t)\le t\), while
	\eqref{eq:numerical-first-crossing} gives
	\(\gamma_h^\varepsilon(t)\le t+h\).
	
	The additive-functional time change used for sticky reflection and the
	multiplicative-functional representation of elastic killing are standard;
	for the corresponding interval construction see
	\cite{kostrykin2010intervals}. 
	The convergence statements above concern
	\(h\downarrow0\) for fixed \(\varepsilon>0\).
	The dependence of the reported \(\varepsilon\)-slopes on the operational
	mesh is assessed separately by the mesh-refinement experiment described in
	Subsection~\ref{subsec:quenched-errors}.
	
	\section*{Acknowledgments}
	
	Alexis Anagnostakis was supported by the Centro de Modelamiento Matemático (CMM) BASAL fund FB210005 for Centers of Excellence from ANID-Chile. 
	Fausto Colantoni thanks Ulm University and the INdAM-GNAMPA group for their support.
	
	\section*{Declaration of AI use}
	
	Generative AI tools were used to improve the exposition, to suggest
	refinements in the presentation of some arguments, and to assist with the
	numerical implementation. The mathematical results, arguments, and proofs
	were developed by the authors. All AI-assisted output was critically reviewed
	and independently verified by the authors, who take full responsibility for
	the content of the manuscript.


\end{document}